\theoremstyle{plain}
\newtheorem{lemma}{Lemma}[section]
\newtheorem{theorem}[lemma]{Theorem}
\newtheorem{prop}[lemma]{Proposition}
\newtheorem{corollary}[lemma]{Corollary}
\newtheorem{prob}[lemma]{Problem}
\newtheorem{conj}[lemma]{Conjecture}
\theoremstyle{definition}
\newtheorem{defi}[lemma]{Definition}
\newtheorem{definition}[lemma]{Definition}
\newtheorem*{definition*}{Definition}       
\newtheorem{nota}[lemma]{Notation}
\newtheorem{rem}[lemma]{Remark}
\theoremstyle{remark}
\newcommand{\Bi}{\mathcal{B}}   
\newcommand{\relsigma}{\mathrel{\sigma}}  
\DeclareMathOperator{\greenL}{\mathcal{L}}
\DeclareMathOperator{\greenR}{\mathcal{R}}
\DeclareMathOperator{\greenJ}{\mathcal{J}}
\DeclareMathOperator{\greenH}{\mathcal{H}}
\DeclareMathOperator{\greenD}{\mathcal{D}}
\newcommand{\cgp}{\mathrel{\sim_G}}
\newcommand{\con}{\mathrel{\sim_c}}
\newcommand{\ci}{\mathrel{\sim_i}}
\newcommand{\cp}{\mathrel{\sim_p}}
\newcommand{\cl}{\mathrel{\sim_l}}
\newcommand{\co}{\mathrel{\sim_{\!o}}}
\newcommand{\cu}{\mathrel{\sim_u}}
\newcommand{\ncon}{\mathrel{\sim_{\mathfrak{n}}}}
\newcommand{\ctr}{\mathrel{\sim_{tr}}}
\newcommand{\inv}{^{-1}}
\DeclareMathOperator\sym{Sym}
\DeclareMathOperator\dom{dom}
\DeclareMathOperator\ima{im}
\DeclareMathOperator\spa{span}
\DeclareMathOperator\rank{rank}
\newcommand\mz{\diamond}
\newcommand\al{\alpha}
\newcommand\bt{\beta}
\newcommand\sig{\sigma}
\newcommand\del{\delta}
\newcommand\gam{\gamma}
\newcommand\lam{\lambda}
\newcommand\vep{\varepsilon}
\newcommand\tet{\theta}
\newcommand\ups{\upsilon}
\newcommand\ome{\omega}
\newcommand\sm{\setminus}
\newcommand\lan{\langle}
\newcommand\ran{\rangle}
\newcommand\arr{\rightarrow}
\newcommand\ara{\stackrel{\al}{\arr}}
\newcommand\arb{\stackrel{\bt}{\arr}}
\newcommand\join{\bigsqcup}
\newcommand\jo{\sqcup}
\newcommand\mi{\mathcal{I}}
\newcommand\fr{\mathcal{FI}}
\newcommand\da{\Delta_\al}
\newcommand\ta{\Theta_\al}
\newcommand\oa{\Omega_\al}
\newcommand\dka{\da^k}
\newcommand\tka{\ta^k}
\newcommand\ua{\Upsilon_{\!\al}}
\newcommand\la{\Lambda_\al}
\newcommand\db{\Delta_\bt}
\newcommand\tb{\Theta_\bt}
\newcommand\ob{\Omega_\bt}
\newcommand\dkb{\db^k}
\newcommand\tkb{\tb^k}
\newcommand\ub{\Upsilon_{\!\bt}}
\newcommand\lb{\Lambda_\bt}
\newcommand\gd{\mathcal{D}}
\newcommand\gl{\mathcal{L}}
\newcommand\gr{\mathcal{R}}
\newcommand{\ale}{\aleph}
\newcommand{\aep}{\ale_\vep}
\newcommand\jr{J_r}
\newcommand\pp{\mathbb{P}}
\title{Conjugacy in Inverse Semigroups}
\author{Jo\~{a}o Ara\'{u}jo}
\author{Michael Kinyon${}^\dagger$}
\thanks{${}^\dagger$ Partially supported by Simons Foundation Collaboration Grant 359872}
\author{Janusz Konieczny}
\address[Ara\'{u}jo]{Centro de \'{A}lgebra \\
Universidade de Lisboa \\
1649--003 Lisboa \\ Portugal\\
and\\
Universidade Aberta\\
1269--001 Lisboa \\ Portugal}
\email{\url{jaraujo@ptmat.fc.ul.pt}}
\address[Kinyon]{Department of Mathematics \\
University of Denver \\ Denver, CO 80208 USA}
\email{\url{mkinyon@du.edu}}
\address[Konieczny]{Department of Mathematics \\
University of Mary Washington\\ Fredericksburg, VA 22401 USA}
\email{\url{jkoniecz@umw.edu}}
\date{}
\begin{document}

\begin{abstract}
In a group $G$, elements $a$ and $b$ are conjugate if there exists $g\in G$ such that $g\inv ag=b$.
This conjugacy relation, which plays an important role in group theory, can be extended in a natural way
to inverse semigroups: for elements $a$ and $b$ in an inverse semigroup $S$, $a$ is conjugate to $b$, which we will
write as $a\ci b$, if there exists $g\in S^1$ such that $g\inv ag=b$ and $gbg\inv =a$.
The purpose of this paper is to study the conjugacy $\ci$ in several classes of inverse semigroups:
symmetric inverse semigroups, free inverse semigroups, McAllister $P$-semigroups, factorizable inverse monoids,
Clifford semigroups, the bicyclic monoid and stable inverse semigroups.
\end{abstract}

\keywords{Inverse semigroups; conjugacy; symmetric inverse semigroups; free inverse semigroups;
McAllister $P$-semigroups; factorizable inverse monoids; Clifford semigroups; bicyclic monoid;
stable inverse semigroups}

\subjclass[2010]{20M10, 20M18, 20M05, 20M20}

\maketitle

\section{Introduction}
\label{sec:int}

The conjugacy relation $\cgp$ in a group $G$ is defined as follows: for $a,b\in G$, $a\cgp b$ if there exists $g\in G$ such that $g\inv ag=b$ and $b=gag\inv$.

A semigroup $S$ is said to be \emph{inverse} if for every $x\in S$, there exists exactly one $x\inv\in S$ such that $x=xx\inv x$ and $x\inv xx\inv$. Thus one can extend the definition of group conjugacy verbatim to inverse semigroups. (As usual $S^1$ denotes the semigroup $S$ extended by an identity element when none is present.) 

\begin{definition}\label{def:conjugation}
Let $S$ be an inverse semigroup. Elements $a,b\in S$ are said to be \emph{conjugate}, denoted $a\ci b$, if there exists $g\in S^1$ such that $g\inv ag = b$ and $gbg\inv = a$. In short,
\begin{equation}\label{eq:inv_conj}
a\ci b\quad\iff\quad \exists_{g\in S^1}\ (\,g\inv ag = b\,\text{ and }\,gbg\inv = a\,)\,.
\end{equation}
We call the relation $\ci$ $i$-\emph{conjugacy} (``$i$'' for ``inverse'').
\end{definition}
\smallskip

At first glance, this notion of conjugacy for inverse semigroups seems simultaneously both natural and naive: natural because it is an obvious way to extend $\cgp$ formally to inverse semigroups, and naive because one might not initially
expect a formal extension to exhibit much structure. But surprisingly, it turns out that this conjugacy coincides with one that 
Mark Sapir considered the best notion for inverse semigroups. The aim of this paper is to carry out an in depth study of $\ci$, and to show that its naturality goes far beyond its definition; $\ci$ is, in fact, as hinted by Sapir, a highly structured and interesting notion of conjugacy.

Our first three results (Section~\ref{sec:sym}) generalize the known theorems for permutations on a set $X$ to partial injective
transformations on~$X$.
\begin{itemize}
  \item[(1)] Elements $\al$ and $\bt$ of the symmetric inverse semigroup $\mi(X)$
are conjugate if and only if they have the same cycle-chain-ray type (Theorem~\ref{tcon}).
 \item[(2)] If $X$ is a finite set with $n$ elements, then $\mi(X)$ has $\sum_{r=0}^np(r)p(n-r)$ $i$-conjugacy classes (Theorem~\ref{tnuf}).
 \item[(3)] If $X$ is an infinite set with $|X|=\aep$, then $\mi(X)$ has
$\kappa^{\ale_0}$ $i$-conjugacy classes, where $\kappa=\ale_0+|\vep|$ (Theorem~\ref{tcci}).
\end{itemize}

The next two results (Section~\ref{sec:free}) concern the free inverse semigroup.
\begin{itemize}
  \item[(4)] For every element $w$ in the free inverse semigroup $\fr(X)$ generated by the set $X$, the conjugacy class of $w$ is finite
(Theorem~\ref{tfr1}).
  \item[(5)] The conjugacy problem is decidable for $\fr(X)$ (Theorem~\ref{tfr2}).
\end{itemize}

The following results (Sections~\ref{sec:factorizable}, \ref{sec:P}, \ref{sec:clifford} and \ref{sec:stable}) concern other important classes of inverse semigroups.

\begin{itemize}
  \item[(6)] If $S=P(G,\mathcal{X},\mathcal{Y})$ is a McAlister $P$-semigroup, then
for all $(A,g),(B,h)\in S$,
$(A,g)\!\ci\!(B,h)$ if and only if there exists $(C,k)\in S$ such that
{\rm(i)} $A=kB=C\land gC\land A$ and \rm{(ii)} $g=khk\inv $ (Theorem~\ref{tmca}).
  \item[(7)] If $S$ is an inverse semigroup, then for all $a,b\in S$, the set of
  all $g\in S^1$ such that $g\inv ag=b$ and $gbg\inv = a$ is upward closed in the natural partial
  order on $S^1$ (Theorem~\ref{thm:upward}).
  \item[(8)] If $S$ is a factorizable inverse monoid, then two elements are $\ci$-related if and only if they are conjugate under a unit  element. (Corollary~\ref{cor:factorizable}).
  \item[(9)] If $G$ is a group, then for any $Ha,Kb$ in the coset monoid $\mathcal{CM}(G)$,
  $Ha\!\ci\!Kb$ if and only if there exists $g\in G$ such that $g\inv Hag=Kb$ and $gKbg\inv = Ha$
  (Corollary~\ref{cor:coset}).
  \item[(10)] Elements $a$ and $b$ in a Clifford semigroup $S$ are conjugate if and only if $a$ and $b$ are group conjugate
in some subgroup of $S$ (Theorem~\ref{thm:Clifford_H}).
  \item[(11)] If $S$ is an inverse semigroup, then $S$ is a Clifford semigroup if and only if
no two different idempotents of $S$ are conjugate (Theorem~\ref{thm:Clifford_char}).
  \item[(12)] In the bicyclic monoid $\Bi$, $i$-conjugacy coincides with the minimal group congruence (Theorem~\ref{thm:bicyclic}).
  \item[(13)] An inverse semigroup $S$ is stable if and only if $i$-conjugacy and the natural partial order intersect trivially as
relations in $S\times S$ (Theorem~\ref{thm:stable}).
\end{itemize}

We give now a quick overview of the state of the art regarding notions of conjugacy for semigroups. As recalled above, the conjugacy relation $\cgp$ in a group $G$ is defined by
\begin{equation}\label{eq:group_conj}
a\cgp b\quad\iff\quad \exists_{g\in G}\ (\,g\inv ag = b\,\text{ and }\,gbg\inv = a\,)\,.
\end{equation}
(In fact, $\cgp$ is traditionally defined less symmetrically, but the symmetric form of \eqref{eq:group_conj} follows since $g\inv ag = b$ if and only if $gbg\inv = a$.) This definition does not make sense in general semigroups, so conjugacy has been generalized to semigroups in a variety of ways.

For a monoid $S$ with identity element $1$, let $U(S)$ denote its group of units. \emph{Unit conjugacy} in $S$ is modeled on $\cgp$ in groups by
\begin{equation}\label{eq:unit_conj}
  a\cu b\quad\iff\quad \exists_{g\in U(S)}\ (\,g\inv ag = b\,\text{ and }\,gbg\inv = a\,)\,.
\end{equation}
(We will write ``$\sim$'' with various subscripts for possible definitions of conjugacy in semigroups. In this case, the subscript $u$ stands for ``unit.'') See, for instance, \cite{KuMa07,KuMa09}. However, $\cu$ does not make sense in a semigroup without an identity element. Requiring $g\in U(S^1)$ in unit conjugacy does not help for arbitrary semigroups because $U(S^1) = \{1\}$ if $S$ is not a monoid.

Conjugacy in a group $G$ can, of course, be rewritten without using inverses:
$a\cgp b\in G$ are conjugate if and only if there exists $g\in G$ such that $ag=gb$. Using this
formulation, \emph{left conjugacy} $\cl$ has been defined for a semigroup $S$ \cite{Ot84,Zh91,Zh92}:
\begin{equation}
\label{eq:left_con}
a\cl b\quad\iff\quad \exists_{g\in S^1}\ ag=gb.
\end{equation}
In a general semigroup $S$, the relation $\cl$ is reflexive and transitive, but not symmetric.
In addition, if $S$ has a zero, then $\cl$ is the universal relation $S\times S$, so $\cl$ is not useful for such semigroups.

The relation $\cl$, however, is an equivalence on any free semigroup. Lallement \cite{La79} defined two elements of a free semigroup to be conjugate if they are related by $\cl$, and then showed that $\cl$ is equal to the following relation in a free semigroup $S$:
\begin{equation}\label{econ2}
a\cp b\quad\iff\quad \exists_{u,v\in S^1}\ (\,a=uv\,\text{ and }\,b=vu\,).
\end{equation}
In a general semigroup $S$, $\cp\,\neq\,\cl$ and in fact, the relation $\cp$ is reflexive and symmetric, but not necessarily transitive. Kudryavtseva and Mazorchuk \cite{KuMa07,KuMa09} considered the transitive closure $\sim_p^*$ of $\sim_p$ as a conjugacy relation in a general semigroup. (See also \cite{Hi06}.)

Otto \cite{Ot84} studied the relations $\cl$ and $\cp$ in the monoids $S$ presented by finite Thue systems, and then symmetrized $\cl$ to give yet another definition of conjugacy in such an $S$:
\begin{equation}\label{econ3}
a\co b\quad\iff\quad \exists_{g,h\in S^1}\ (\,ag=gb\,\text{ and }\,bh=ha\,)\,.
\end{equation}
The relation $\co$ is an equivalence relation in an arbitrary semigroup $S$, but, again, it is the universal relation for any semigroup with zero.

This deficiency of $\co$ was remedied in \cite{AKM}, where the following relation was defined on an arbitrary semigroup $S$:
\begin{equation}\label{econc}
a\con b\quad\iff\quad \exists_{g\in\pp^1(a)}\ \exists_{h\in\pp^1(b)}\ (\,ag=gb\,\text{ and }\,bh=ha\,),
\end{equation}
where for $a\ne0$, $\pp(a)=\{g\in S^1:\forall_{m\in S^1}\,(ma\ne0\Rightarrow(ma)g\ne0)\}$, $\pp(0)=\{0\}$, and $\pp^1(a)=\pp(a)\cup\{1\}$.
(See \cite[\S2]{AKM} for a motivation for this definition.) The relation $\con$ is an equivalence on $S$, it does not reduce to $S\times S$ if $S$ has a zero, and it is equal to $\co$ if $S$ does not have a zero.

In 2018, the third author \cite{Ko18} defined a conjugacy $\ncon$ on any semigroup $S$ by
\begin{equation}\label{e1dcon}
a\ncon b\quad\iff\quad \exists_{g,h\in S^1}\ (\,ag=gb,\,\, bh=ha,\,\, hag=b,\,\textnormal{ and }\,gbh=a\,)\,.
\end{equation}
The relation $\ncon$ is an equivalence relation on any semigroup and it does not reduce to $S\times S$ if $S$ has a zero. In fact, it is the smallest of all conjugacies defined up to this point for general semigroups.

We point out that each of the relations \eqref{eq:unit_conj}--\eqref{e1dcon} reduces to group conjugacy when $S$ is a group. However, assuming we require conjugacy to be an equivalence relation on general semigroups, only $\sim_p^*$, $\co$, $\con$, and $\ncon$ can provide possible definitions of conjugacy. We have
\[
\ncon\,\,\subseteq\,\,\sim_p^*\,\,\subseteq\,\,\co\text{ and }\ncon\,\,\subseteq\,\,\sim_c\,\,\subseteq\,\,\co,
\]
and, with respect to inclusion, $\sim_p^*$ and $\sim_c$ are not comparable \cite[Prop.~2.3]{Ko18}. For detailed comparison and analysis, in various classes of semigroups,
of the conjugacies $\sim_p^*$, $\co$, $\con$, and also trace (character) conjugacy $\ctr$ defined for epigroups, see \cite{ArKiKoMa17}.

A notion of conjugacy for inverse semigroups equivalent to our $\ci$ has appeared elsewhere.
In fact, part of our motivation for the present study was a \textsc{MathOverflow} post by Sapir \cite{Sapir}, in which he claimed that the following is the best notion of conjugacy in inverse semigroups: for $a,b$ in an inverse semigroup $S$, $a$ is conjugate to $b$ if there exists
$t\in S^1$ such that
\begin{equation}\label{eq:sapir}
t\inv at=b,\,\,\, a\cdot tt\inv = tt\inv\cdot a=a,\,\,\,\text{and}\,\,\, b\cdot t\inv t = t\inv t\cdot b=b.
\end{equation}
Sapir notes that this notion of conjugacy is implicit in the work of Yamamura~\cite{yam}.
It is easy to show that Sapir's relation coincides with $\ci$ (Proposition~\ref{prp:sapir}).

Of the conjugacies $\sim_p^*$, $\co$, $\con$, and $\ncon$ defined for an arbitrary semigroup $S$, only $\ncon$ reduces to $\ci$ if $S$ is an inverse semigroup \cite[Thm.~2.6]{Ko18}.
Observe also that if $S$ is an inverse monoid,
\begin{equation}\label{eq:unit_i}
  \cu\ \subseteq\ \ci\,.
\end{equation}
This inclusion is generally proper, but we will see that equality holds in factorizable inverse
monoids (Corollary~\ref{cor:factorizable}).

We conclude this introduction with three general results about $i$-conjugacy.
In an inverse semigroup $S$, both of the following identities hold: for all $x,y\in S$,
\[
(x\inv)\inv = x\,\,\text{ and }\,\, (xy)\inv = y\inv x\inv\,.
\]
From these, the following is easy to see.

\begin{lemma}\label{lem:equiv}
The conjugacy $\ci$ is an equivalence relation in any inverse semigroup.
\end{lemma}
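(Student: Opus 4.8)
The plan is to verify the three defining properties of an equivalence relation directly, in each case exhibiting an explicit element of $S^1$ that witnesses the required instance of \eqref{eq:inv_conj}. Each verification is a short computation that uses only the two identities $(x\inv)\inv = x$ and $(xy)\inv = y\inv x\inv$ recalled just before the statement, so no structural facts about inverse semigroups beyond these are needed.

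Concretely: for reflexivity I would take the witness $g = 1 \in S^1$, so that $g\inv a g = a = g a g\inv$ for every $a\in S$, giving $a\ci a$. For symmetry, suppose $a\ci b$ with witness $g\in S^1$, that is, $g\inv a g = b$ and $g b g\inv = a$; then I claim $g\inv$ witnesses $b\ci a$, since $(g\inv)\inv b\,g\inv = g b g\inv = a$ and $g\inv a\,(g\inv)\inv = g\inv a g = b$ using $(g\inv)\inv = g$. For transitivity, suppose $a\ci b$ with witness $g$ and $b\ci c$ with witness $h$; then I claim $gh\in S^1$ witnesses $a\ci c$, since $(gh)\inv a\,(gh) = h\inv(g\inv a g)h = h\inv b h = c$ and $(gh)\,c\,(gh)\inv = g(h c h\inv)g\inv = g b g\inv = a$, using $(gh)\inv = h\inv g\inv$.

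The only point worth emphasizing is why the symmetric form of Definition~\ref{def:conjugation} — demanding \emph{both} $g\inv a g = b$ and $g b g\inv = a$ — is essential: it is exactly the second equation that lets $g\inv$ serve as a witness for symmetry, and it is the two equations for $g$ together with the two for $h$ that combine into the two equations for $gh$ in transitivity; a one-sided definition would not compose. Beyond this observation there is no real obstacle: the argument is the verbatim analogue of the classical proof for group conjugacy, with the group-theoretic facts $(x\inv)\inv = x$ and $(xy)\inv = y\inv x\inv$ replaced by the corresponding identities that hold in any inverse semigroup.
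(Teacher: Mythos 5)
Your proof is correct and follows exactly the paper's argument: reflexivity via the witness $1\in S^1$, symmetry via $g\inv$ using $(g\inv)\inv = g$, and transitivity via the product $gh$ using $(gh)\inv = h\inv g\inv$. The additional remarks on why the two-sided form of the definition is what makes the witnesses compose are accurate but not needed for the verification itself.
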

\begin{proof}
Let $S$ be an inverse semigroup. Then $\ci$ is reflexive
(since $1\in S^1$) and symmetric (since $(g\inv )\inv = g$ for every $g\in S$).
For all $a,b,c\in S$, $g,h\in S^1$, if
$g\inv ag = b$, $gbg\inv = a$, $h\inv bh = c$, and $hch\inv = b$, then $(gh)\inv\cdot a\cdot gh = c$ and
$gh\cdot c\cdot(gh)\inv = a$. Thus $\ci$ is also transitive.
\end{proof}

For an inverse semigroup $S$, the equivalence class of $a\in S$ with respect to $\ci$ will be called the \emph{conjugacy class} of $a$ and denoted by $[a]_{\ci}$.

The following proposition shows that \eqref{eq:inv_conj} is equivalent to Sapir's formulation
\eqref{eq:sapir}, and also shows the specific connection between the conjugacies $\ci$ and $\co$. For an inverse semigroup $S$, $a,b\in S$ and $g\in S^1$, we consider the following equations.
\begin{center}
  \begin{tabular}{rccrc}
  (i)   & $g\inv ag = b$       & \qquad & (ii)   & $gbg\inv = a$ \\
  (iii) & $ag = gb$            & \qquad & (iv)   & $bg\inv = g\inv a$ \\
  (v)   & $a\cdot gg\inv = a$  & \qquad & (vi)   & $gg\inv\cdot a = a$ \\
  (vii) & $b\cdot g\inv g = b$ & \qquad & (viii) & $g\inv g\cdot b = b$
\end{tabular}
\end{center}

\begin{prop}
\label{prp:sapir}
Let $S$ be an inverse semigroup. For $a,b\in S$ and $g\in S^1$, the following sets of conditions are equivalent and each set implies all of (i)--(viii).
\begin{itemize}
  \item[\rm(a)]\quad $\{\mathrm{(i),(ii)}\}$ \textup{(}that is, $a\ci b$\textup{)};
  \item[\rm(b)]\quad $\{\mathrm{(i),(v),(vi)}\}$;
  \item[\rm(c)]\quad $\{\mathrm{(iii),(v),(viii)}\}$;
  \item[\rm(d)]\quad $\{\mathrm{(ii),(vii),(viii)}\}$;
  \item[\rm(e)]\quad $\{\mathrm{(iv),(vi),(vii)}\}$.
\end{itemize}
\end{prop}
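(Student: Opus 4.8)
The plan is to prove the proposition by establishing a cycle of implications among the five condition-sets, together with the observation that any one of them forces all of (i)--(viii). The backbone of the argument is the elementary fact that in an inverse semigroup, for $a = g\inv a g$ one can multiply on either side by $g$ or $g\inv$ and use $gg\inv g = g$ to pass freely between the ``two-sided'' equations (i), (ii) and the ``one-sided'' equations (iii), (iv), \emph{provided} the relevant idempotent-fixing conditions (v)--(viii) are available. So the first step is to isolate a handful of these micro-implications as a warm-up: e.g.\ (iii) and (v) together give (i) by right-multiplying (iii) by $g\inv$ and using $agg\inv = a$; symmetrically (iv) and (vii) give (ii); conversely (i) implies (iii) only after one knows (v), since $ag = (gg\inv a g)g$ needs $gg\inv a = a$, and so on. I would record these as a short table or a single displayed chain of equalities rather than proving each in isolation.

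The second step is to run the cyclic implications (a) $\Rightarrow$ (b) $\Rightarrow$ (c) $\Rightarrow$ (d) $\Rightarrow$ (e) $\Rightarrow$ (a). For (a) $\Rightarrow$ (b): assuming (i) and (ii), I would derive (v) by computing $a\cdot gg\inv = (gbg\inv)gg\inv = g b g\inv = a$ (using (ii) twice, or (ii) and then $g\inv g g\inv = g\inv$), and (vi) symmetrically; (i) is already present. For (b) $\Rightarrow$ (c): from (i) and (v) one gets (iii) by left-multiplying (i) by $g$ and absorbing the resulting $gg\inv$ against $a$ via (v); and from (v) one gets (viii) after first noting that (v) forces $gg\inv$ and $a$ to commute appropriately in the semilattice of idempotents, which is where the commutativity of idempotents in an inverse semigroup does the real work — this lets one transfer a fixing condition on $a$ from the ``$g$ side'' to a fixing condition on $b = g\inv a g$ on the ``$g\inv$ side''. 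The remaining three implications are obtained from these two by the symmetry $a\leftrightarrow b$, $g\leftrightarrow g\inv$, which swaps (iii)$\leftrightarrow$(iv), (v)$\leftrightarrow$(vii), (vi)$\leftrightarrow$(viii) and rotates (b),(c),(d),(e) among themselves; I would state this symmetry once and invoke it rather than rewriting the computations.

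The third step is the ``each set implies all of (i)--(viii)'' clause. Having proved all five sets equivalent, it suffices to show that their union — i.e.\ the conjunction of all five sets — contains all eight conditions, which is immediate since (i),(ii) are in (a), (iii) is in (c), (iv) is in (e), (v) is in (b), (vi) is in (b), (vii) is in (d), (viii) is in (c). So this step costs nothing once the cycle is closed.

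The main obstacle will be the bookkeeping in the implications that \emph{produce} a fixing condition (v)--(viii) that was not assumed, such as (viii) inside (b) $\Rightarrow$ (c). Here one cannot just manipulate (i) formally; one has to use that $e = gg\inv$ and $a$ satisfy $ea = a$ (from (v)) hence also $ae = a$ because, writing $f = a\inv a$ and $f' = aa\inv$, the idempotents $e, f, f'$ all commute, and then push this through the substitution $b = g\inv a g$ to land on $g\inv g \cdot b = b$. Getting the idempotent algebra exactly right — rather than hand-waving ``idempotents commute'' — is the only place where the proof is more than mechanical, and I would write that one computation out in full and let the symmetry handle its three mirror images.
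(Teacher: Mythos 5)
Your overall architecture (a cycle of implications plus a symmetry, then reading off (i)--(viii) from the union of the five sets) matches the paper's, but two of your steps do not work as described. The more serious one concerns the only step you flag as non-mechanical: producing (viii) inside (b)$\implies$(c). You assert that one ``cannot just manipulate (i) formally'' and must instead pass from $a\cdot gg\inv=a$ to $gg\inv\cdot a=a$ using commutativity of the idempotents $gg\inv$, $a\inv a$, $aa\inv$. That inference is false in an inverse semigroup: $ae=a$ is equivalent to $a\inv a\leq e$, while $ea=a$ is equivalent to $aa\inv\leq e$, and these are independent conditions (in $\mi(X)$, with the paper's conventions, $ae=a$ says $\ima(a)\subseteq\dom(e)$ while $ea=a$ says $\dom(a)\subseteq\dom(e)$). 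Commutativity of idempotents lets you permute $gg\inv$ with $a\inv a$, not with $a$ itself. Fortunately the conclusion you want is reached exactly by the formal manipulation you reject: $g\inv g\cdot b=g\inv g\cdot g\inv ag=(g\inv gg\inv)ag=g\inv ag=b$, using only (i) and $g\inv gg\inv=g\inv$. This is what the paper does, and no idempotent commutativity is needed anywhere in the proposition.

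The second gap is structural. Your symmetry $a\leftrightarrow b$, $g\leftrightarrow g\inv$ swaps (v)$\leftrightarrow$(vii) and (vi)$\leftrightarrow$(viii), hence fixes (a) and swaps (b)$\leftrightarrow$(d) and (c)$\leftrightarrow$(e). Applied to your two proven implications (a)$\implies$(b) and (b)$\implies$(c) it yields (a)$\implies$(d) and (d)$\implies$(e) --- not the missing links (c)$\implies$(d) and (e)$\implies$(a) of your five-term cycle, so the cycle never closes. The paper's arrangement avoids this: prove the short cycle (a)$\implies$(b)$\implies$(c)$\implies$(a) directly (the last step is $g\inv ag=g\inv g\cdot b=b$ and $gbg\inv=a\cdot gg\inv=a$, from (iii), (viii) and (v)), and then let the symmetry transport it to (a)$\implies$(d)$\implies$(e)$\implies$(a). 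There is also a small citation slip: to get (iii) from (b) you left-multiply (i) by $g$ and absorb $gg\inv$ on the \emph{left} of $a$, which is condition (vi), not (v) --- harmless since (b) contains both. Your closing observation, that once equivalence is established every set implies all eight conditions because the five sets jointly mention all of (i)--(viii), is correct.
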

\begin{proof}
(a)$\implies$(b): $a\cdot gg\inv = gbg\inv gg\inv = gbg\inv = a$ and
$gg\inv \cdot a = gg\inv gbg\inv = gbg\inv = a$.

(b)$\implies$(c): $ag = gg\inv\cdot ag = gb$ and $g\inv g\cdot b = g\inv g\cdot g\inv ag
= g\inv ag = b$.

(c)$\implies$(a): $g\inv ag = g\inv g\cdot b = b$ and $gbg\inv = a\cdot gg\inv = a$.

The cycle of implications (a)$\implies$(d)$\implies$(e)$\implies$(a) follows from the cycle
already proven by exchanging the roles of $a$ and $b$ and replacing $g$ with $g\inv$
(since $(g\inv)\inv = g$).
\end{proof}

Finally, we characterize one of the two extreme cases for $i$-conjugacy  on an inverse semigroup $S$, namely where $\ci$ is the universal relation $S\times S$. In  Theorem \ref{thm:commutative} we will consider the opposite extreme, where $\ci$ is the identity relation (equality). Similar discussions for other notions of conjugacy can be found in \cite{ArKiKoMa17}.

For an inverse semigroup $S$, we denote by $E(S)$ the semilattice of idempotents of $S$ \cite[p.~146]{Ho95}.

\begin{theorem}\label{thm:universal}
  Let $S$ be an inverse semigroup. Then $\ci$ is the universal relation $S\times S$ if and
  only if $S$ is a singleton.
\end{theorem}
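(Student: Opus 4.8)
The ``if'' direction is trivial: if $S=\{a\}$ is a singleton, then $\ci$ has only the single pair $(a,a)$, which is indeed all of $S\times S$. So the content is the ``only if'' direction, and the plan is to argue by contradiction: assume $\ci$ is universal and that $|S|\ge 2$, and derive an impossibility.

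First I would exploit the fact that every inverse semigroup has a nonempty semilattice of idempotents $E(S)$, and that $\ci$ being universal forces any two idempotents $e,f\in E(S)$ to be conjugate. The key observation is what $i$-conjugacy does to idempotents. Suppose $e\ci f$ via $g\in S^1$, so $g\inv eg=f$ and $gfg\inv=e$. By Proposition~\ref{prp:sapir}, we also get conditions (v)--(viii); in particular $e\cdot gg\inv=e$ and $gg\inv\cdot e=e$, i.e.\ $e\le gg\inv$ in the natural partial order, and similarly $f\le g\inv g$. Now the crucial algebraic fact is that for an idempotent $e$ and any $g\in S^1$ with $e\le gg\inv$, the element $g\inv eg$ is again an idempotent, and moreover one computes that $g\inv eg$ and $e$ are $\gd$-related (indeed $\gl$- and $\gr$-related in the right combination) — in fact they have the same ``size'' in a suitable sense. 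The cleanest route: show that $e\ci f$ with $e,f$ idempotents implies $e$ and $f$ generate isomorphic principal ideals, or more simply that the map $x\mapsto g\inv xg$ restricted to $\{x: x\le e\}$ is an order-isomorphism onto $\{y: y\le f\}$. Hence if $\ci$ is universal, all principal order-ideals of the semilattice $E(S)$ are isomorphic.

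The main obstacle is then to leverage this to contradict $|S|\ge 2$. I would split into cases. If $|E(S)|\ge 2$, pick idempotents $e\ne f$. Since $E(S)$ is a semilattice, $ef=fe$ is an idempotent with $ef\le e$; if $ef<e$ (strictly) then the order-ideal below $e$ properly contains the order-ideal below $ef$, but by the previous paragraph all such ideals (below $e$, below $f$, below $ef$) are mutually isomorphic, which is impossible for a strict containment of finite ideals and requires care for infinite ones — so instead I would use the sharper statement that $x\mapsto g\inv xg$ maps $\{x\le e\}$ \emph{bijectively} onto $\{x\le f\}$ while being inclusion-preserving, and note $ef$ is the top of neither image unless $e=f$; a short direct computation with the conjugating element shows $e$ must equal $f$, a contradiction. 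If instead $|E(S)|=1$, say $E(S)=\{e\}$, then $e$ is a two-sided identity and $S$ is a group (an inverse semigroup with a unique idempotent is a group); but then $\ci$ restricts to group conjugacy on $S$, which is universal only for the trivial group, so $S=\{e\}$, again contradicting $|S|\ge 2$. Combining the two cases yields that $|S|\ge 2$ is impossible, completing the proof.

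I expect the delicate point to be the idempotent case: making rigorous that $e\ci f$ forces $e=f$ when $e,f$ are idempotents. The efficient way is probably to prove the little lemma that \emph{no two distinct idempotents of an inverse semigroup are $i$-conjugate} as a standalone fact (it is essentially Theorem~\ref{thm:Clifford_char} restricted in one direction, but we should not cite a later result circularly, so I would prove it directly here): if $e,f\in E(S)$ and $g\inv eg=f$, $gfg\inv=e$, then using (v)--(viii) we have $e=gg\inv\cdot e=gfg\inv$, and $f=g\inv eg=g\inv\cdot gfg\inv\cdot g=(g\inv g)f(g\inv g)=f$ — fine, that is consistent — so the real computation is $ef=(gfg\inv)(g\inv eg)$ hmm; the clean finish is: $e=gfg\inv$ and $f\le g\inv g$ give $ee\inv = e$ and from $e=gfg\inv$ we get $eg=gfg\inv g=gf$ (using $f\le g\inv g$), so $g\inv eg = g\inv gf$, and combined with (viii) $g\inv g\cdot f=f$ we again only get $f=f$. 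The genuinely needed ingredient is that $g\inv eg$ has the same \emph{rank}/\emph{co-rank} as $e$, forcing equality once we also know $e\le f$ or $f\le e$ after multiplying by a common lower bound. I would therefore organize the proof around: (1) any two idempotents are $\ci$; (2) $\ci$ preserves the principal ideal generated by an idempotent up to isomorphism, hence in particular cannot identify $e$ with a strictly smaller idempotent $ef$; (3) conclude $E(S)$ is a singleton; (4) conclude $S$ is a group and then trivial.
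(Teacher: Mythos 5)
Your ``if'' direction and the opening observation that conjugates of idempotents are idempotents are fine, but the core of your ``only if'' argument rests on a claim that is false: that in an inverse semigroup no two distinct idempotents can be $i$-conjugate (equivalently, that $e\ci f$ with $f=ef<e$ is impossible). That property actually \emph{characterizes} Clifford semigroups (Theorem~\ref{thm:Clifford_char}) and fails in general: by Lemma~\ref{lem:xx'x'x}, $xx\inv\ci x\inv x$ for every $x$, and these are distinct idempotents whenever $x$ is not completely regular. Concretely, in the bicyclic monoid $(0,0)\ci(1,1)$ while $(1,1)<(0,0)$, and in $\mi(X)$ any two idempotents of the same rank are conjugate. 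This is why your attempted computations keep ``only getting $f=f$'': the statement you are trying to prove is not a theorem, so no manipulation of conditions (i)--(viii) will yield it. Your fallback via the order-isomorphism of $\{x\le e\}$ onto $\{x\le f\}$ cannot close the gap either, since a poset can be order-isomorphic to a proper principal down-set of itself (again the bicyclic monoid), and the promised ``short direct computation'' showing $e=f$ does not exist; likewise ``rank/co-rank'' has no meaning in an abstract inverse semigroup and would fail even in $\mi(X)$ for infinite $X$.

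The missing idea is to use the universality of $\ci$ on \emph{all} of $S$ before restricting attention to $E(S)$: since $E(S)\ne\emptyset$ and every conjugate $g\inv eg$ of an idempotent is an idempotent, universality forces every element of $S$ to be conjugate to, hence equal to, an idempotent, so $S$ is a semilattice. Once $S$ is a semilattice the conjugacy equations collapse at once: $g\inv=g$ and everything commutes, so $g\inv eg=f$ and $gfg\inv=e$ give $f=egg=eg$ and $e=fgg=fg=(eg)g=eg=f$. This renders your case split on $|E(S)|$ unnecessary (your $|E(S)|=1$ case is correct as far as it goes, but the $|E(S)|\ge 2$ case cannot be completed along the lines you propose).
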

\begin{proof}
  Suppose $\ci$ is universal. For all $e\in E(S)$ and $g\in S$, we have $g\inv eg\in E(S)$,
  and so every element of $S$ is an idempotent, that is, $S$ is a semilattice. Now for
  $e,f\in S$, let $g\in S$ be given such that $g\inv eg = f$ and $gfg\inv = e$. Since
  $g\inv = g$, we have $f = geg=egg = eg$ and so $e = gfg=fgg = fg = (eg)g = eg = f$.
  Therefore $S$ has only one element. The converse is trivial.
\end{proof}

\section{Conjugacy in symmetric inverse semigroups}
\label{sec:sym}
\setcounter{equation}{0}
For a nonempty set $X$ (finite or infinite), denote by $\mi(X)$ the \emph{symmetric inverse semigroup} on $X$,
that is, the semigroup of partial injective transformations on $X$ under composition. The semigroup $\mi(X)$ is
universal for the class of inverse semigroups (see \cite{Pe84} and \cite[Ch.~5]{Ho95})
since every inverse semigroup can be embedded in some $\mi(X)$ \cite[Thm.~5.1.7]{Ho95}.
This is analogous to the fact that every group can be embedded in some symmetric
group $\sym(X)$ of permutations on a set $X$. The semigroup $\mi(X)$ has $\sym(X)$ as its group
of units and contains a zero (the empty transformation, which we will denote by $0$).

In this section, we will describe conjugacy in $\mi(X)$ and its ideals, and count the conjugacy classes in $\mi(X)$
for both finite and infinite $X$.

\subsection{Cycle-chain-ray decomposition of elements of $\mi(X)$}

The cycle decomposition of a permutation can be extended to the cycle-chain-ray decomposition
of a partial injective transformation (see \cite{Ko13}).

We will write functions on the right and compose from left to right; that is,
for $f:A\to B$ and $g:B\to C$, we will write $xf$, rather than $f(x)$, and
$x(fg)$, rather than $g(f(x))$.
Let $\al\in \mi(X)$. We denote the domain of $\al$ by $\dom(\al)$ and the image of $\al$ by
$\ima(\al)$. The union $\dom(\al)\cup\ima(\al)$ will be called the \emph{span} of $\al$
and denoted $\spa(\al)$.
We say that $\al$ and $\bt$ in $\mi(X)$ are \emph{completely disjoint} if $\spa(\al)\cap\spa(\bt)=\emptyset$.
For $x,y\in X$, we write $x\ara y$ if $x\in\dom(\al)$ and $x\al=y$.

\begin{defi}\label{djoi}
Let $M$ be a set of pairwise completely disjoint elements of $\mi(X)$. The \emph{join} of the elements of $M$,
denoted $\join_{\gam\in M}\gam$, is the element of $\mi(X)$ whose domain is $\bigcup_{\gam\in M}\dom(\gam)$
and whose values are defined by
\[
x(\join_{\gam\in M}\gam)=x\gam_0,
\]
where $\gam_0$ is the (unique) element of $M$ such that $x\in\dom(\gam_0)$. If $M=\emptyset$,
we define $\join_{\gam\in M}\gam$ to be~$0$ (the zero in $\mi(X)$).
If $M=\{\gam_1,\gam_2,\ldots,\gam_k\}$ is finite, we may write the join as $\gam_1\jo\gam_2\jo\cdots\jo\gam_k$.
\end{defi}

\begin{defi}\label{dbas}
Let $\ldots,x_{-2},x_{-1},x_0,x_1,x_2,\ldots$ be pairwise distinct elements of $X$. The following elements of $\mi(X)$
will be called \emph{basic} partial injective transformations on $X$.
\begin{itemize}
  \item A \emph{cycle} of length $k$ ($k\geq1$), written $(x_0\,x_1\ldots\, x_{k-1})$,
is an element $\del\in\mi(X)$ with $\dom(\del)=\{x_0,x_1,\ldots,x_{k-1}\}$, $x_i\del=x_{i+1}$ for all $0\leq i<k-1$,
and $x_{k-1}\del=x_0$.
  \item A \emph{chain} of length $k$ ($k\geq1$), written $[x_0\,x_1\ldots\, x_k]$,
is an element $\tet\in\mi(X)$ with $\dom(\tet)=\{x_0,x_1,\ldots,x_{k-1}\}$ and $x_i\tet=x_{i+1}$ for all $0\leq i\leq k-1$.
  \item A \emph{double ray}, written $\lan\ldots\,x_{-1}\,x_0\,x_1\ldots\ran$,
is an element $\ome\in\mi(X)$ with $\dom(\omega)=\{\ldots,x_{-1},x_0,x_1,\ldots\}$ and $x_i\ome=x_{i+1}$ for all $i$.
  \item A \emph{right ray}, written $[x_0\,x_1\,x_2\ldots\ran$,
is an element $\ups\in\mi(X)$ with $\dom(\ups)=\{x_0,x_1,x_2,\ldots\}$ and $x_i\ups=x_{i+1}$ for all $i\geq0$.
  \item A \emph{left ray}, written $\lan\ldots\, x_2\,x_1\,x_0]$,
is an element $\lam\in\mi(X)$ with $\dom(\lam)=\{x_1,x_2,x_3,\ldots\}$ and $x_i\lam=x_{i-1}$ for all $i>0$.
\end{itemize}
By a \emph{ray} we will mean a double, right, or left ray.
\end{defi}

We note the following.
\begin{itemize}
  \item The span of a basic partial injective transformation is exhibited by the notation. For example, the span
of the right ray $[1\,2\,3\ldots\ran$ is $\{1,2,3,\ldots\}$.
  \item The left bracket in ``$\eta=[x\ldots$'' indicates that $x\notin\ima(\eta)$; while the right bracket
in ``$\eta=\ldots\,x]$'' indicates that $x\notin\dom(\eta)$. For example, for the chain $\tet=[1\,2\,3\,4]$,
$\dom(\tet)=\{1,2,3\}$ and $\ima(\tet)=\{2,3,4\}$.
  \item A cycle $(x_0\,x_1\ldots\, x_{k-1})$ differs from the corresponding cycle in the symmetric group
of permutations on $X$ in that the former is undefined for every $x\in(X\sm\{x_0,x_1,\ldots,x_{k-1}\})$, while the latter
fixes every such $x$.
\end{itemize}

The following decomposition was proved in \cite[Prop.~2.4]{Ko13}.

\begin{prop}
\label{pdec}
Let $\al\in\mi(X)$ with $\al\ne0$. Then there exist unique sets: $\da$ of cycles,
$\ta$ of chains, $\oa$ of double rays, $\ua$ of right rays, and $\la$ of left rays
such that
the transformations in $\da\cup\ta\cup\oa\cup\ua\cup\la$ are pairwise completely disjoint and
\begin{equation}
\al=\join_{\del\in\da}\!\!\del\jo\join_{\tet\in\ta}\!\!\tet\jo\join_{\ome\in\oa}\!\!\ome\jo\join_{\ups\in\ua}\!\!\ups
\jo\join_{\lam\in\la}\!\!\lam.\label{edec}
\end{equation}
\end{prop}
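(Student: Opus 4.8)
The plan is to realize the decomposition as the decomposition of the \emph{functional digraph} of $\al$ into connected components. Let $\Gamma_\al$ be the directed graph with vertex set $\spa(\al)$ and an edge $x\to x\al$ for each $x\in\dom(\al)$. Since $\al$ is a partial \emph{injection}, every vertex of $\Gamma_\al$ has in-degree at most one and out-degree at most one, so every connected component of the underlying undirected graph is a finite directed cycle, a finite directed path, a one-way infinite directed path, or a two-way infinite directed path. I would first record the (routine) classification lemma: if $C$ is a connected component of $\Gamma_\al$, then the restriction $\al_C$ of $\al$ to $C\cap\dom(\al)$ lies in $\mi(X)$, satisfies $\spa(\al_C)=C$, and is a basic transformation (a cycle, a chain, a double ray, a right ray, or a left ray); conversely, the digraph of a basic transformation is connected and of one of these shapes. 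To prove the lemma I would fix $x\in C$ and trace the forward orbit $x,x\al,x\al^2,\dots$ and the backward orbit $x,x\al\inv,x\al^{-2},\dots$ as far as each is defined; injectivity forces the elements so produced to be pairwise distinct unless the forward orbit returns to $x$, in which case $C$ is a finite cycle, and otherwise the two orbits splice into a path whose vertex set one checks is already closed under $\al$ and $\al\inv$, hence --- being connected --- equal to $C$. The four cases ``forward/backward orbit finite or infinite'' give the chain, left ray, right ray, and double ray; the only degenerate situation is a one-point component, which is necessarily a fixed point of $\al$, i.e.\ a cycle of length $1$.

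Granted the lemma, \emph{existence} is immediate: the connected components of $\Gamma_\al$ partition $\spa(\al)$, so the $\al_C$ are pairwise completely disjoint, and $\al=\join_{C}\al_C$ because both sides have domain $\dom(\al)$ and agree at each point (every $x\in\dom(\al)$ lies in a unique component). Sorting the $\al_C$ by basic type produces the sets $\da$ of cycles, $\ta$ of chains, $\oa$ of double rays, $\ua$ of right rays, and $\la$ of left rays.

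For \emph{uniqueness}, suppose $\al=\join_{\gam\in M}\gam$ with the members of $M$ pairwise completely disjoint basic transformations. The key observation is that for every $\gam\in M$ and every $x\in\spa(\gam)$ one has $x\in\dom(\al)$ iff $x\in\dom(\gam)$, $x\in\ima(\al)$ iff $x\in\ima(\gam)$, and in the former case $x\al=x\gam$: indeed $\dom(\al)=\bigcup_{\gam'\in M}\dom(\gam')$, so if $x\in\dom(\gam')$ then $x\in\spa(\gam)\cap\spa(\gam')$, forcing $\gam=\gam'$ by complete disjointness (and symmetrically for images). Hence $\al$ restricted to $\spa(\gam)$ equals $\gam$, so $\spa(\gam)$ is closed under $\al$ and $\al\inv$ within $\spa(\al)$ --- therefore a union of components of $\Gamma_\al$ --- and it simultaneously carries the digraph of the basic transformation $\gam$, which is connected. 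Thus $\spa(\gam)$ is a single component $C$, and $\gam=\al|_{\dom(\gam)}=\al|_{C\cap\dom(\al)}=\al_C$. Finally $\gam\mapsto\spa(\gam)$ is injective on $M$ (by complete disjointness) and surjective onto the components of $\Gamma_\al$ (since $\bigcup_{\gam\in M}\spa(\gam)=\spa(\al)$, and a component meeting some $\spa(\gam)$ coincides with it), so $M=\{\al_C: C\text{ a component of }\Gamma_\al\}$; since each $\al_C$ has exactly one basic type, the splitting of $M$ into the five families --- and hence each of $\da,\ta,\oa,\ua,\la$ --- is determined.

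The main obstacle is the classification lemma, and within it the ``nothing more'' part: one must check that a component contains no vertices beyond those on the traced forward--backward path (equivalently, that this vertex set is closed under $\al$ and $\al\inv$), and one must handle the one-point component correctly --- it is a cycle of length $1$, not a spurious ``chain of length $0$''. The remaining ingredients --- complete disjointness of the pieces, the equality $\al=\join_C\al_C$, and the bookkeeping in the uniqueness argument --- are routine.
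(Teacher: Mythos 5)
The paper does not actually prove Proposition~\ref{pdec}; it is quoted from \cite[Prop.~2.4]{Ko13}, so there is no internal argument to compare against. Your proof is correct and self-contained, and it is the standard argument one would expect: decompose the functional digraph of $\al$ into connected components, classify each component by tracing the forward and backward $\al$-orbits (using injectivity to rule out collisions except in the cyclic case, and handling the one-point component as a cycle of length $1$ rather than a degenerate chain), and derive uniqueness from the observation that complete disjointness forces each basic constituent $\gam$ of any such join to agree with $\al$ on $\spa(\gam)$, making $\spa(\gam)$ a union of components that is connected, hence a single component. The only points worth making explicit if you write this up in full are (i) that the five basic types are mutually exclusive (so sorting the components by type is unambiguous --- this follows from comparing $\dom$, $\ima$, and $\spa$ for each type), and (ii) the closure of the traced orbit set under both $\al$ and $\al\inv$, which you already flag as the crux of the classification lemma. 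Neither is a gap, just bookkeeping.
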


We will call the join (\ref{edec}) the \emph{cycle-chain-ray decomposition} of $\al$. If
$\eta\in\da\cup\ta\cup\oa\cup\ua\cup\la$, we will say that $\eta$ is \emph{contained} in $\al$
(or that $\al$ \emph{contains} $\eta$). If $\al=0$, we set $\da=\ta=\oa=\ua=\la=\emptyset$.
We note the following.
\begin{itemize}
  \item If $\al\in\sym(X)$, then $\al=\join_{\del\in\da}\!\del\jo\join_{\ome\in\oa}\!\ome$
(since $\ta=\ua=\la=\emptyset$), which corresponds to the usual cycle decomposition
of a permutation \cite[1.3.4]{Sc64}.
  \item If $\dom(\al)=X$, then $\al=\join_{\del\in\da}\!\del\jo\join_{\ome\in\oa}\!\ome\jo\join_{\ups\in\ua}\!\ups$
(since $\ta=\la=\emptyset$), which corresponds to the decomposition given in \cite{Le91}.
  \item If $X$ is finite, then $\al=\join_{\del\in\da}\!\del\jo\join_{\tet\in\ta}\!\tet$
(since $\oa=\ua=\la=\emptyset$), which is the decomposition given in \cite[Thm.~3.2]{Li96}.
\end{itemize}

For example, if $X=\{1,2,3,4,5,6,7,8,9\}$, then
\[
\al=\begin{pmatrix}1&2&3&4&5&6&7&8&9\\3&6&-&5&9&8&-&2&-\end{pmatrix}\in\mi(x)
\]
written in cycle-chain decomposition (no rays since $X$ is finite) is
$\al=(2\,6\,8)\jo[1\,3]\jo[4\,5\,9]$. The following~$\bt$ is an example of an element of $\mi(\mathbb Z)$ written
in cycle-chain-ray decomposition:
\[
\bt=(2\,4)\jo[6\,8\,10]\jo\lan\ldots-6\,-4\,-2\,-1\,-3\,-5\,\ldots\ran\jo[1\,5\,9\,13\,\ldots\ran\jo\lan\ldots15\,11\,7\,3].
\]

\subsection{Characterization of $\ci$ in $\mi(X)$}
We will now characterize $\ci$ in $\mi(X)$ using the cycle-chain-ray decomposition of partial
injective transformations.

\begin{nota}
\label{nzero}
We will fix an element $\mz\notin X$. For $\al\in\mi(X)$ and $x\in X$, we will write $x\al=\mz$
if and only if $x\notin\dom(\al)$. We will also assume that $\mz\al=\mz$. With this notation, it will make sense
to write $x\al=y\bt$ or $x\al\ne y\bt$ ($\al,\bt\in\mi(X)$, $x,y\in X$) even when $x\notin\dom(\al)$ or $y\notin\dom(\bt)$.
\end{nota}

\begin{lemma}\label{ltau}
Let $\al,\bt,\tau\in\mi(X)$ and suppose $\tau\inv \al\tau=\bt$ and $\tau\bt\tau\inv =\al$. Then for all $x,y\in X$:
\begin{itemize}
  \item[\rm(1)] $\spa(\al)\subseteq\dom(\tau)$ and $\spa(\bt)\subseteq\ima(\tau)$;
  \item[\rm(2)] if $x\ara y$ then $x\tau\arb y\tau$;
  \item[\rm(3)] if $x\notin\dom(\al)$ and $x\in\dom(\tau)$, then $x\tau\notin\dom(\bt)$;
  \item[\rm(4)] if $x\notin\ima(\al)$ and $x\in\dom(\tau)$, then $x\tau\notin\ima(\bt)$.
\end{itemize}
\end{lemma}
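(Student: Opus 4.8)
The plan is to use Proposition~\ref{prp:sapir} to turn the hypotheses $\tau\inv\al\tau=\bt$ and $\tau\bt\tau\inv=\al$ into the more computationally convenient system (i)--(viii), and then reason directly with the pointwise action of composition under the convention of Notation~\ref{nzero}. In particular, from Proposition~\ref{prp:sapir} we get that all of (i)--(viii) hold for $\al,\bt,\tau$; the four that will do the work are (v) $\al\cdot\tau\tau\inv=\al$, (vi) $\tau\tau\inv\cdot\al=\al$, (vii) $\bt\cdot\tau\inv\tau=\bt$, and (viii) $\tau\inv\tau\cdot\bt=\bt$, together with the conjugating equations themselves. Recall that for $\tau\in\mi(X)$, $\tau\tau\inv$ is the (partial) identity on $\ima(\tau)$ and $\tau\inv\tau$ is the partial identity on $\dom(\tau)$.

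For part (1): if $x\in\dom(\al)$, then since $\tau\tau\inv\cdot\al=\al$ (condition (vi)), we have $x\in\dom(\tau\tau\inv)=\ima(\tau)$, so $x\in\ima(\tau)$; if $x\in\ima(\al)$, then $\al\cdot\tau\tau\inv=\al$ (condition (v)) forces $x\in\dom(\tau\tau\inv)=\ima(\tau)$ as well — wait, this gives $\spa(\al)\subseteq\ima(\tau)$, not $\dom(\tau)$. To get $\spa(\al)\subseteq\dom(\tau)$ I instead symmetrize: applying Proposition~\ref{prp:sapir} with the roles of $\al,\bt$ swapped and $\tau$ replaced by $\tau\inv$, I get the analogues of (v)--(viii), and from $\al\cdot\tau\inv(\tau\inv)\inv=\al\cdot\tau\inv\tau=\al$ I read off $\spa(\al)\subseteq\dom(\tau\inv\tau)=\dom(\tau)$. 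The second half, $\spa(\bt)\subseteq\ima(\tau)$, follows the same way from (vii), (viii) applied to $\bt$: $\bt\cdot\tau\inv\tau=\bt$ and $\tau\inv\tau\cdot\bt=\bt$ give $\spa(\bt)\subseteq\dom(\tau\inv\tau)=\dom(\tau)$... again the wrong one — so here I use instead the fact that $\tau\bt\tau\inv=\al$ means $\bt$ maps into things that survive left-multiplication by $\tau$, i.e.\ directly: from $a\ci b$ symmetry, $\spa(\bt)\subseteq\dom(\tau\inv)=\ima(\tau)$ by the exact argument just given applied to $b\ci a$ with conjugator $\tau\inv$. So the clean organization is: prove $\spa(\al)\subseteq\dom(\tau)$ from the $\tau\inv$-version of (v),(vi), and get $\spa(\bt)\subseteq\ima(\tau)=\dom(\tau\inv)$ by the same statement applied to the hypothesis $\tau\bt\tau\inv=\al$, $\tau\inv\al\tau=\bt$ (which is the original hypothesis with $\al\leftrightarrow\bt$, $\tau\leftrightarrow\tau\inv$).

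For part (2): suppose $x\ara y$, i.e.\ $x\al=y$. By part (1), $x\in\spa(\al)\subseteq\dom(\tau)$ and $y\in\spa(\al)\subseteq\dom(\tau)$, so $x\tau$ and $y\tau$ are honest elements of $X$. Now apply condition (i), $\tau\inv\al\tau=\bt$, at the point $x\tau\in\ima(\tau)$: since $x\tau\,\tau\inv=x$ (because $x\in\dom(\tau)$ and $\tau$ is injective), we get $(x\tau)\bt=(x\tau)\tau\inv\al\tau=x\al\tau=y\tau$, so $x\tau\arb y\tau$, as desired. For parts (3) and (4): for (3), suppose $x\notin\dom(\al)$ but $x\in\dom(\tau)$; if we had $x\tau\in\dom(\bt)$, then applying $\tau\bt\tau\inv=\al$ (condition (ii)) at $x$ gives $x\al=x\tau\bt\tau\inv$, and since $x\tau\bt\in\ima(\bt)\subseteq\ima(\tau)$ by part (1) applied to $b\ci a$, the element $x\tau\bt\tau\inv$ is a genuine element of $X$, so $x\in\dom(\al)$, contradiction. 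Part (4) is the mirror image: if $x\notin\ima(\al)$, $x\in\dom(\tau)$, and $x\tau\in\ima(\bt)$, pick $z$ with $z\bt=x\tau$; then $z\in\spa(\bt)\subseteq\ima(\tau)$, so $z=w\tau$ for some $w\in\dom(\tau)$, and by part (2) (in the direction $w\arb{} $, i.e.\ using $\tau\bt\tau\inv=\al$ which is the same hypothesis with $\al\leftrightarrow\bt$, $\tau\leftrightarrow\tau\inv$) we get $w\ara x$ — wait, part (2) as stated goes from $\al$-arrows to $\bt$-arrows; the symmetric version gives: $z\arb x\tau$ implies $z\tau\inv\ara x\tau\tau\inv$, i.e.\ $w\ara x$, contradicting $x\notin\ima(\al)$.

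The main obstacle is purely bookkeeping: keeping straight which of $\dom$ vs.\ $\ima$ and $\tau$ vs.\ $\tau\inv$ appears where, and exploiting the $\al\leftrightarrow\bt$, $\tau\leftrightarrow\tau\inv$ symmetry of the hypotheses to avoid redoing each computation twice. There is no conceptual difficulty — once part (1) pins down that all relevant points lie in $\dom(\tau)$ or $\ima(\tau)$, every remaining claim is a one-line evaluation of a composite at a point, using injectivity of $\tau$ to cancel $\tau\tau\inv$ or $\tau\inv\tau$. I would present part (1) carefully, then remark that parts (2)--(4) follow by evaluating the conjugacy equations pointwise, doing (2) in full and (3), (4) by citing (2) and its mirror image.
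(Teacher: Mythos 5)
Your overall strategy is the paper's: convert the hypotheses into conditions (i)--(viii) of Proposition~\ref{prp:sapir}, read part (1) off the idempotent conditions, and obtain (2)--(4) by pointwise evaluation together with the $\al\leftrightarrow\bt$, $\tau\leftrightarrow\tau\inv$ symmetry. Your parts (2)--(4) are correct and essentially identical to the paper's computations. Part (1), however, contains a genuine error as written. The paper writes functions on the right and composes left to right, so $\tau\tau\inv$ is the partial identity on $\dom(\tau)$ and $\tau\inv\tau$ is the partial identity on $\ima(\tau)$ --- the opposite of what you assert at the outset. With the correct reading, conditions (v) and (vi), namely $\al\cdot\tau\tau\inv=\al$ and $\tau\tau\inv\cdot\al=\al$, immediately give $\ima(\al)\subseteq\dom(\tau)$ and $\dom(\al)\subseteq\dom(\tau)$, hence $\spa(\al)\subseteq\dom(\tau)$; likewise (vii) and (viii) give $\spa(\bt)\subseteq\dom(\tau\inv)=\ima(\tau)$. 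This is exactly the paper's one-line argument, and there is no ``wrong one'' needing a patch.

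The patch you introduce is itself false: the equation $\al\cdot\tau\inv\tau=\al$ is not among (i)--(viii), is not what the swapped instance of Proposition~\ref{prp:sapir} yields (under the swap $\al$ plays the role of $b$ and $\tau\inv$ of $g$, so condition (vii) for the swapped data is $\al\cdot\tau\tau\inv=\al$ again, i.e.\ (v)), and it genuinely fails. For a counterexample take $X=\{1,2\}$, $\al=(1)$ and $\bt=(2)$ the two cycles of length $1$, and $\tau$ the map with $\dom(\tau)=\{1\}$ and $1\tau=2$; then $\tau\inv\al\tau=\bt$ and $\tau\bt\tau\inv=\al$, but $\al\tau\inv\tau=0\ne\al$ because $\ima(\al)=\{1\}$ is disjoint from $\ima(\tau)=\{2\}$. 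Note also that in part (2) you correctly use $(x\tau)\tau\inv=x$ for $x\in\dom(\tau)$, which contradicts the convention you state at the top; your computations in (2)--(4) silently follow the paper's convention, which is why they come out right. The repair is purely local: state the convention correctly, delete the false equation and the accompanying back-and-forth, and part (1) follows at once from (v)--(viii).
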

\begin{proof}
By Proposition~\ref{prp:sapir}, $\al=\tau(\tau\inv \al)$ and $\al=(\al\tau)\tau\inv $. Thus $\dom(\al)\subseteq\dom(\tau)$
and $\ima(\al)\subseteq\ima(\tau\inv )=\dom(\tau)$, and so $\spa(\al)\subseteq\dom(\tau)$. By the foregoing argument,
$\spa(\bt)\subseteq\dom(\tau\inv )=\ima(\tau)$. We have proved~(1).

To prove (2), let $x\ara y$. Since $\al\tau=\tau\bt$ (by Proposition~\ref{prp:sapir}),
$(x\tau)\bt=(x\al)\tau=y\tau$. Since $y\tau\ne\mz$ by (1), it follows that $x\tau\arb y\tau$.

To prove (3), let $x\notin\dom(\al)$ and $x\in\dom(\tau)$. Then $(x\tau)\bt=(x\al)\tau=\mz\tau=\mz$.
Thus $(x\tau)\bt=\mz$, that is, $x\tau\notin\dom(\bt)$.

To prove (4), let $x\notin\ima(\al)$ and $x\in\dom(\tau)$. Suppose to the contrary that $x\tau\in\ima(\bt)$.
Then $z\bt=x\tau$ for some $z\in X$. By Proposition~\ref{prp:sapir}, $\bt\tau\inv =\tau\inv \al$. Thus
$x=(x\tau)\tau\inv =(z\bt)\tau\inv =(z\tau\inv )\al$, and so $x\in\ima(\al)$, which is a contradiction.
Hence $x\tau\notin\ima(\bt)$.
\end{proof}

\begin{defi}
\label{dtas}
Let $\ldots,x_{-1},x_0,x_1,\ldots$ be pairwise distinct elements of $X$.
Let $\del=(x_0\ldots x_{k-1})$, $\tet=[x_0\,x_1\,\ldots\,x_k]$,
$\ome=\lan\ldots x_{-1}\, x_0\, x_1\ldots\ran$, $\ups=[x_0\, x_1\, x_2\ldots\ran$, and
$\lam=\lan\ldots x_2\,x_1\,x_0]$. For any $\eta\in\{\del,\tet,\ome,\ups,\lam\}$
and any $\tau\in\mi(X)$ such that $\spa(\eta)\subseteq\dom(\tau)$, we define $\eta\tau^*$ to be $\eta$
in which each $x_i$ has been replaced with~$x_i\tau$. Since $\tau$ is injective, $\eta\tau^*$
is a cycle of length $k$ [chain of length $k$, double ray, right ray, left ray] if $\eta$ is
a cycle of length $k$ [chain of length $k$, double ray, right ray, left ray].
For example,
\[
\del\tau^*=(x_0\tau\,x_1\tau\ldots x_{k-1}\tau)\mbox{ and }\lam\tau^*=\lan\ldots x_2\tau\,x_1\tau\,x_0\tau].
\]
\end{defi}

\begin{nota}
\label{ndka}
For $0\ne\al\in\mi(X)$, let $\da$ be the set of cycles and $\ta$ be the set of chains that occur in the
cycle-chain-ray decomposition of $\al$ (see (\ref{edec})). For $k\geq1$, we denote by $\dka$ the set of cycles in $\da$ of length $k$,
and by $\tka$ the set of chains in $\ta$ of length $k$. If $\al=0$, we set $\dka=\tka=\emptyset$.
\end{nota}

For a function $f:A\to B$ and $A_0\subseteq A$, $A_0f=\{af:a\in A_0\}$ denotes the image of $A_0$ under~$f$.

\begin{prop}
\label{pcon}
Let $\al,\bt,\tau\in\mi(X)$ be such that $\tau\inv \al\tau=\bt$ and $\tau\bt\tau\inv =\al$. Then for every $k\geq1$,
$\dka\tau^*=\dkb$, $\tka\tau^*=\tkb$, $\oa\tau^*=\ob$, $\ua\tau^*=\ub$, and $\la\tau^*=\lb$.
\end{prop}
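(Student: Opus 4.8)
The plan is to show that $\tau^*$ maps each type of basic transformation contained in $\al$ to a basic transformation of the same type contained in $\bt$, and that this map is a bijection from each of the five sets $\dka,\tka,\oa,\ua,\la$ onto its $\bt$-counterpart. The heart of the argument is Lemma~\ref{ltau}: its parts (1)--(4) tell us precisely that conjugation by $\tau$ transports the ``arrow structure'' of $\al$ onto that of $\bt$ while preserving which points are missing from the domain or image. Everything else is bookkeeping.

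First I would fix $\eta\in\da\cup\ta\cup\oa\cup\ua\cup\la$ and verify $\eta\tau^*$ is a basic transformation of the same type that is contained in $\bt$. Write $\eta$ using pairwise distinct points $x_i$ as in Definition~\ref{dtas}. By Lemma~\ref{ltau}(1), $\spa(\eta)\subseteq\spa(\al)\subseteq\dom(\tau)$, so $\eta\tau^*$ is defined; since $\tau$ is injective, the points $x_i\tau$ are pairwise distinct and $\eta\tau^*$ is the basic transformation of the same type on those points. That each arrow $x_i\ara x_{i+1}$ becomes $x_i\tau\arb x_{i+1}\tau$ is exactly Lemma~\ref{ltau}(2), so all the arrows of $\eta\tau^*$ are arrows of $\bt$. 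What remains for ``$\eta\tau^*$ is contained in $\bt$'' is that $\eta\tau^*$ is a \emph{maximal} such sub-object, i.e.\ a whole connected component of $\bt$'s decomposition: for a chain $\tet=[x_0\ldots x_k]$ one needs $x_0\tau\notin\ima(\bt)$ and $x_k\tau\notin\dom(\bt)$, which follow from Lemma~\ref{ltau}(4) applied to $x_0\notin\ima(\al)$ and Lemma~\ref{ltau}(3) applied to $x_k\notin\dom(\al)$; similarly for the endpoint of a right ray or a left ray. For cycles and double rays there are no endpoints, so nothing extra is needed. Hence $\eta\tau^*$ lies in $\db,\tb,\ob,\ub$, or $\lb$ according to the type of $\eta$, and moreover in the correct length-graded piece $\dkb$ or $\tkb$ when $\eta$ has length $k$. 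This gives the inclusions $\dka\tau^*\subseteq\dkb$, $\tka\tau^*\subseteq\tkb$, $\oa\tau^*\subseteq\ob$, $\ua\tau^*\subseteq\ub$, $\la\tau^*\subseteq\lb$.

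For the reverse inclusions I would invoke symmetry: the hypotheses are symmetric in $(\al,\tau)$ and $(\bt,\tau\inv)$ (since $\tau\bt\tau\inv=\al$ and $(\tau\inv)\inv\bt\tau\inv=\tau\bt\tau\inv=\al$, using also Proposition~\ref{prp:sapir}), so the argument just given, applied with $\al,\bt,\tau$ replaced by $\bt,\al,\tau\inv$, yields $\dkb(\tau\inv)^*\subseteq\dka$ and likewise for the other four types. Since $\tau$ is injective on $\dom(\tau)\supseteq\spa(\al)\cup\spa(\bt)$, the operations $\eta\mapsto\eta\tau^*$ and $\eta\mapsto\eta(\tau\inv)^*$ are mutually inverse on the relevant spans, so each inclusion $\dka\tau^*\subseteq\dkb$ together with $\dkb(\tau\inv)^*\subseteq\dka$ forces $\dka\tau^*=\dkb$, and similarly $\tka\tau^*=\tkb$, $\oa\tau^*=\ob$, $\ua\tau^*=\ub$, $\la\tau^*=\lb$.

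The main obstacle is the maximality/completeness check in the first step: one must be careful that $\eta\tau^*$ is not merely a sub-chain (or sub-ray) sitting inside some longer component of $\bt$. This is where parts (3) and (4) of Lemma~\ref{ltau} do the real work, ruling out any extension of $\eta\tau^*$ at its endpoints; the length-grading and type-preservation are then immediate from injectivity of $\tau$ and the shape-preserving nature of $\tau^*$ recorded in Definition~\ref{dtas}. One also uses implicitly that the decomposition in Proposition~\ref{pdec} is \emph{unique}, so that identifying $\eta\tau^*$ as a complete basic piece of $\bt$ of the appropriate type places it unambiguously in exactly one of $\db,\tb,\ob,\ub,\lb$.
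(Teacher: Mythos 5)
Your proposal is correct and follows essentially the same route as the paper: Lemma~\ref{ltau}(1)--(2) for transporting the arrow structure, (3)--(4) for the maximality check at the endpoints of chains and one-sided rays, and the symmetric argument with $\tau\inv$ (using that $\eta\mapsto\eta\tau^*$ and $\xi\mapsto\xi(\tau\inv)^*$ are mutually inverse) to upgrade the inclusions to equalities. One cosmetic slip: Lemma~\ref{ltau}(1) gives $\spa(\bt)\subseteq\ima(\tau)=\dom(\tau\inv)$ rather than $\spa(\bt)\subseteq\dom(\tau)$, which is in fact exactly what the mutual-inverse step requires.
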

\begin{proof}
Let $k\geq1$.
Let $\del=(x_0\,x_1\ldots\,x_{k-1})\in\dka$. Then $\del\tau^*=(x_0\tau\,x_1\tau\ldots\,x_{k-1}\tau)$.
We have $x_0\ara x_1\ara\cdots\ara x_{k-1}\ara x_0$, and so
$x_0\tau\arb x_1\tau\arb\cdots\arb x_{k-1}\tau\arb x_0\tau$ by Lemma~\ref{ltau}. Thus
$\del\tau^*\in\dkb$.
We have proved that $\dka\tau^*\subseteq\dkb$. Let $\sig=(y_0\,y_1\ldots\,y_{k-1})\in\dkb$.
By the foregoing argument, $\sig(\tau\inv )^*=(y_0\tau\inv \,y_1\tau\inv \ldots\,y_{k-1}\tau\inv )\in\dka$.
Further,
$(\sig(\tau\inv )^*)\tau^*=(y_0\tau\inv \tau\,y_1\tau\inv \tau\ldots\,y_{k-1}\tau\inv \tau)=(y_0\,y_1\ldots\,y_{k-1})=\sig$.
It follows that $\dka\tau^*=\dkb$.

Let $\tet=[x_0\,x_1\ldots\,x_k]\in\tka$. Then $\tet\tau^*=[x_0\tau\,x_1\tau\ldots\,x_k\tau]$.
We have $x_0\ara x_1\ara\cdots\ara x_k$, and so
$x_0\tau\arb x_1\tau\arb\cdots\arb x_k\tau$ by Lemma~\ref{ltau}.
Also by Lemma~\ref{ltau}, $x_0\tau\notin\ima(\bt)$
(since $x_0\notin\ima(\al)$) and $x_k\tau\notin\dom(\bt)$ (since $x_k\notin\dom(\al)$). Thus $\tet\tau^*\in\tkb$.
We have proved that $\tka\tau^*\subseteq\tkb$. Let $\eta=(y_0\,y_1\ldots\,y_{k-1})\in\tkb$.
By the foregoing argument, $\eta(\tau\inv )^*=[y_0\tau\inv \,y_1\tau\inv \ldots\,y_k\tau\inv ]\in\tka$.
Further,
$(\eta(\tau\inv )^*)\tau^*=[y_0\tau\inv \tau\,y_1\tau\inv \tau\ldots\,y_k\tau\inv \tau]=[y_0\,y_1\ldots\,y_k]=\eta$.
It follows that $\tka\tau^*=\tkb$.

The proofs of the remaining equalities are similar.
\end{proof}

\begin{defi}
\label{dtyp}
Let $\al\in\mi(X)$. The sequence
\[
\lan|\da^1|,|\da^2|,|\da^3|,\ldots;|\ta^1|,|\ta^2|,|\ta^3|,\ldots;|\oa|,|\ua|,|\la|\ran
\]
(indexed by the elements of the ordinal $2\ome+3$)
will be called the \emph{cycle-chain-ray type} of $\al$.
This notion generalizes the cycle type of a permutation \cite[p.~126]{DuFo04}.
\end{defi}

The cycle-chain-ray type of $\al$ is completely determined by the \emph{form} of the cycle-chain-ray
decomposition of $\al$. The form is obtained from the decomposition by omitting each occurrence of the symbol ``$\jo$''
and replacing each element of $X$ by some generic symbol, say ``$*$.'' For example,
$\al=(2\,6\,8)\jo[1\,3]\jo[4\,5\,9]$ has the form $(*\,*\,*)[*\,*][*\,*\,*]$, and
\[
\bt=(2\,4)\jo[6\,8\,10]\jo\lan\ldots-6\,-4\,-2\,-1\,-3\,-5\,\ldots\ran\jo[1\,5\,9\,13\,\ldots\ran\jo\lan\ldots15\,11\,7\,3]
\]
has the form
$(*\,*)[*\,*\,*]\lan\ldots *\,*\,*\,\ldots\ran[*\,*\,*\,\ldots\ran\lan\ldots *\,*\,*]$.

It is well known that two elements of the symmetric group $\sym(X)$ are conjugate if and only if
they have the same cycle type \cite[Prop.~11, p.~126]{DuFo04}. The following description
of the conjugacy in the symmetric inverse semigroup $\mi(X)$ generalizes this result.

\begin{theorem}
\label{tcon}
Elements $\al$ and $\bt$ of $\mi(X)$ are conjugate if and only if they have the same cycle-chain-ray type.
\end{theorem}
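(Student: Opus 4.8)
The plan is to prove the two implications separately. Necessity will be read off directly from Proposition~\ref{pcon}; sufficiency will require building an explicit conjugating element from the length‑ and type‑preserving matching between the basic transformations occurring in the cycle‑chain‑ray decompositions of $\al$ and of $\bt$.

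Suppose first that $\al\ci\bt$. Since $\mi(X)$ is a monoid, $S^1 = \mi(X)$, so there is $\tau\in\mi(X)$ with $\tau\inv\al\tau = \bt$ and $\tau\bt\tau\inv = \al$. By Lemma~\ref{ltau}(1) the operation $\eta\mapsto\eta\tau^*$ is defined on every basic transformation of $\al$, and by Proposition~\ref{pcon} it maps $\dka$ onto $\dkb$ and $\tka$ onto $\tkb$ for every $k\ge1$, $\oa$ onto $\ob$, $\ua$ onto $\ub$, and $\la$ onto $\lb$. Each of these maps is injective (because $\tau$ is), hence a bijection, so $|\dka| = |\dkb|$, $|\tka| = |\tkb|$ for all $k\ge1$, $|\oa| = |\ob|$, $|\ua| = |\ub|$, and $|\la| = |\lb|$; that is, $\al$ and $\bt$ have the same cycle‑chain‑ray type.

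Conversely, suppose $\al$ and $\bt$ have the same cycle‑chain‑ray type. If $\al = 0$, the type forces $\bt = 0$, and $0\ci0$ via $g = 1$; so assume $\al,\bt\ne0$. Equality of corresponding entries of the types provides bijections $\dka\to\dkb$ and $\tka\to\tkb$ for each $k\ge1$, $\oa\to\ob$, $\ua\to\ub$, $\la\to\lb$; write $\eta\mapsto\eta'$ for the resulting matching of the basic transformations of $\al$ with those of $\bt$ (same length, cycles to cycles, chains to chains, double rays to double rays, right rays to right rays, left rays to left rays). For a basic transformation $\eta$ of $\al$, written as $(x_0\,x_1\,\ldots\,x_{k-1})$, $[x_0\,x_1\,\ldots\,x_k]$, $\lan\ldots\,x_{-1}\,x_0\,x_1\,\ldots\ran$, $[x_0\,x_1\,x_2\,\ldots\ran$, or $\lan\ldots\,x_2\,x_1\,x_0]$, its partner $\eta'$ has the same shape with the $x_j$ replaced by pairwise distinct points $y_j$ (for a double ray, fix one index alignment); let $\tau_\eta\in\mi(X)$ be the partial injection with $\dom(\tau_\eta) = \spa(\eta)$ and $x_j\tau_\eta = y_j$ for all $j$, so that $\eta' = \eta\tau_\eta^*$ in the sense of Definition~\ref{dtas}. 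Since the spans of the basic transformations of $\al$ are pairwise disjoint and so are those of $\bt$ (Proposition~\ref{pdec}), the join $\tau := \join_\eta\tau_\eta$ is a well‑defined, injective element of $\mi(X)$ with $\dom(\tau) = \bigcup_\eta\spa(\eta) = \spa(\al)$ and $\ima(\tau) = \bigcup_\eta\spa(\eta') = \spa(\bt)$.

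It remains to verify $\tau\inv\al\tau = \bt$ and $\tau\bt\tau\inv = \al$, for which, by Proposition~\ref{prp:sapir}, it suffices to check conditions (iii), (v) and (viii), i.e.\ $\al\tau = \tau\bt$, $\al\cdot\tau\tau\inv = \al$, and $\tau\inv\tau\cdot\bt = \bt$. As $\tau\tau\inv$ is the identity on $\dom(\tau) = \spa(\al)\supseteq\ima(\al)$, condition (v) holds; as $\tau\inv\tau$ is the identity on $\ima(\tau) = \spa(\bt)\supseteq\dom(\bt)$, condition (viii) holds. For $\al\tau = \tau\bt$: both sides are undefined off $\spa(\al) = \dom(\tau)$, so fix $x\in\spa(\al)$ and let $\eta$ be the unique basic transformation of $\al$ with $x\in\spa(\eta)$; then $x\tau = x\tau_\eta$, and by Definition~\ref{djoi}, $\al$ agrees with $\eta$ on $\dom(\eta)$ while $\bt$ agrees with $\eta' = \eta\tau_\eta^*$ on $\dom(\eta')$. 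If $x\in\dom(\eta)$, then $x\al\tau = (x\eta)\tau_\eta$, while $x\tau_\eta\in\dom(\eta')$ and $x\tau\bt = (x\tau_\eta)\eta' = (x\eta)\tau_\eta$ as well, the last equality because $\eta'$ is $\eta$ with each point $z$ relabeled to $z\tau_\eta$; so $x\al\tau = x\tau\bt$. If $x\in\spa(\eta)\sm\dom(\eta)$ — possible only when $\eta$ is a chain and $x$ its last point, or $\eta$ a left ray and $x = x_0$ — then $x\notin\dom(\al)$, so $x\al\tau$ is undefined, while $x\tau_\eta\notin\dom(\eta')$, so $x\tau\notin\dom(\bt)$ and $x\tau\bt$ is undefined. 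In every case $x\al\tau = x\tau\bt$, so $\al\tau = \tau\bt$, completing the proof. The one step carrying genuine content is this verification of $\al\tau = \tau\bt$: one must keep the index bookkeeping consistent across the five kinds of basic transformations and remember the points of a chain or left ray that lie in the span but not the domain; everything else follows from Propositions~\ref{pdec}, \ref{pcon}, \ref{prp:sapir} and routine facts about composites of partial injections.
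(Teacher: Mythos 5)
Your proposal is correct and follows essentially the same route as the paper: the forward direction reads the type equality off Proposition~\ref{pcon} together with the injectivity of $\eta\mapsto\eta\tau^*$, and the converse builds the same conjugator $\tau$ as a join of relabelling maps on the spans of the matched basic transformations. The only (minor) difference is that you verify the triple (iii), (v), (viii) of Proposition~\ref{prp:sapir} rather than computing $\tau\inv\al\tau$ and $\tau\bt\tau\inv$ pointwise as the paper does; this is a slight streamlining of the same case analysis, not a different argument.
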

\begin{proof}
Let $\al,\bt\in\mi(X)$. Suppose $\al\ci\bt$, that is, there is $\tau\in\mi(X)$ such that
$\tau\inv \al\tau=\bt$ and $\tau\bt\tau\inv =\al$. Then $\al$ and $\bt$ have the same type by
Proposition~\ref{pcon} and the fact that $\tau^*$ restricted to any set from
$\{\dka:k\geq1\}\cup\{\tka:k\geq1\}\cup\{\oa,\ua,\la\}$ is injective.

Conversely, suppose $\al$ and $\bt$ have the same cycle-chain-ray type. Then for every $k\geq1$,
there are bijections $f_k:\dka\to\dkb$, $g_k:\tka\to\tkb$, $h:\oa\to\ob$, $i:\ua\to\ub$, and $j:\la\to\lb$.
For all $\del\in\dka$, $\tet\in\tka$, $\ome\in\oa$, $\ups\in\ua$, and $\lam\in\la$,
we define $\tau$ on $\spa(\del)\cup\spa(\tet)\cup\spa(\ome)\cup\spa(\ups)\cup\spa(\lam)$
in such a way that $\del\tau^*=\del f_k$, $\tet\tau^*=\tet g_k$, $\ome\tau^*=\ome h$, $\ups\tau^*=\ups i$,
and $\lam\tau^*=\lam j$.
Note that this defines an injective $\tau$ with $\dom(\tau)=\spa(\al)$ and $\ima(\tau)=\spa(\bt)$.

Let $x\in X$. We will prove that $x(\tau\inv \al\tau)=x\bt$. If $x\notin\spa(\bt)$ then
$x\notin\dom(\tau\inv )$ (since $\dom(\tau\inv )=\ima(\tau)=\spa(\bt)$), and so
$x(\tau\inv \al\tau)=\mz(\al\tau)=\mz$ and $x\bt=\mz$. Suppose $x\in(\ima(\bt)\setminus\dom(\bt))$. Then
there is $\xi=\ldots\,x]$ that is either a chain or left ray contained in $\bt$.
By the definition of $\tau$, there is $\eta=\ldots\,z]$ that is either a chain or left ray contained in $\al$
with $z\tau=x$. Then $x(\tau\inv \al\tau)=z(\al\tau)=\mz\tau=\mz$ and $x\bt=\mz$. Finally, suppose $x\in\dom(\bt)$.
Then there is $\xi=\ldots\,x\,y\ldots$ that is a basic partial injective transformation contained in $\bt$.
By the definition of $\tau$, there is $\eta=\ldots\,z\,w\ldots$ that is
a basic partial injective transformation contained in $\al$
with $z\tau=x$ and $w\tau=y$. Then $x(\tau\inv \al\tau)=z(\al\tau)=w\tau=y$ and $x\bt=y$.

We have proved that $\tau\inv \al\tau=\bt$. By the the same argument, applied to $\tau\inv ,\bt,\al$
instead of $\tau,\al,\bt$, we have $\tau\bt\tau\inv =\al$. Hence $\al\ci\bt$.
\end{proof}

Theorem~\ref{tcon} also follows from \cite[Cor.~5.2]{Ko18} and the fact that $\ci\,\,=\,\,\ncon$
in inverse semigroups (see Section~\ref{sec:int}). However, the proof in \cite{Ko18} is not direct since it relies on
a characterization of $\ncon$ in subsemigroups of the semigroup $P(X)$ of all partial
transformations on $X$.

Suppose $X$ is finite with $|X|=n$ and let $\al\in\mi(X)$. Then $\al$ contains no rays,
no cycles of length greater than $n$, and no chains of length greater than $n-1$. Therefore, the cycle-chain-ray type
of $\al$ can be written as
\begin{equation}\label{eccd}
\lan|\da^1|,|\da^2|,\ldots,|\da^n|;|\ta^1|,|\ta^2|,\ldots,|\ta^{n-1}|\ran.
\end{equation}
We will refer to (\ref{eccd}) as the \emph{cycle-chain type} of $\al$. By Theorem~\ref{tcon},
for all $\al,\bt\in\mi(X)$,
\begin{equation}\label{econf}
\al\ci\bt\quad\iff\quad\lan|\da^1|,\ldots,|\da^n|;|\ta^1|,\ldots,|\ta^{n-1}|\ran=
\lan|\db^1|,\ldots,|\db^n|;|\tb^1|,\ldots,|\tb^{n-1}|\ran.
\end{equation}

Suppose $\al\in\mi(X)$ has a finite domain. Then $\al$ does not contain any rays.
Therefore, we will refer to the cycle-chain-ray type of $\al$
as the cycle-chain type of $\al$ even when $X$ is infinite.

By \eqref{eq:inv_conj}, $\al$ and $\bt$ in $\mi(X)$ are conjugate if and only if there exists $\tau\in\mi(X)$
such that $\tau\inv \al\tau=\bt$ and $\tau\bt\tau\inv =\al$. If $X$ is finite, we can replace $\tau$
with a permutation on $X$.

\begin{prop}
\label{pper}
Let $X$ be a finite set, and let $\al,\bt\in\mi(X)$. Then the following conditions are equivalent:
\begin{itemize}
  \item[\rm(i)] $\al$ and $\bt$ are conjugate;
  \item[\rm(ii)] $\al$ and $\bt$ have the same cycle-chain type;
  \item[\rm(iii)] there exists $\sig\in\sym(X)$ such that $\sig\inv \al\sig=\bt$.
 \end{itemize}
\end{prop}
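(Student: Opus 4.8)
The plan is to derive the equivalence (i)$\iff$(ii) from Theorem~\ref{tcon} and to prove the two remaining implications directly, the only substantial one being (ii)$\implies$(iii). For (i)$\iff$(ii): since $X$ is finite, no element of $\mi(X)$ contains a double, right, or left ray, so the cycle-chain-ray type of an element reduces to its cycle-chain type \eqref{eccd}, and the equivalence is then exactly Theorem~\ref{tcon}. The implication (iii)$\implies$(i) is immediate: if $\sig\in\sym(X)$ and $\sig\inv\al\sig=\bt$, then $\sig\sig\inv=\sig\inv\sig=\id$, so $\sig\bt\sig\inv=\sig\sig\inv\al\sig\sig\inv=\al$, whence $\al\ci\bt$ with conjugator $g=\sig\in S^1$ (this is also just the inclusion \eqref{eq:unit_i}).

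For (ii)$\implies$(iii) I would first observe that the cycle-chain type determines the size of the span. A cycle of length $k$ has span of cardinality $k$ and a chain of length $k$ has span of cardinality $k+1$, and in the cycle-chain decomposition \eqref{edec} of $\al$ (with no rays, as $X$ is finite) these basic transformations are pairwise completely disjoint, so
\[
|\spa(\al)| = \sum_{k\ge1} k\,|\dka| + \sum_{k\ge1}(k+1)\,|\tka|,
\]
and likewise $|\spa(\bt)| = \sum_{k\ge1} k\,|\dkb| + \sum_{k\ge1}(k+1)\,|\tkb|$. Since $\al$ and $\bt$ have the same cycle-chain type, $|\dka|=|\dkb|$ and $|\tka|=|\tkb|$ for every $k$, whence $|\spa(\al)|=|\spa(\bt)|$ and therefore $|X\sm\spa(\al)|=|X\sm\spa(\bt)|$, using finiteness of $X$. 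Now I would take the partial injection $\tau\in\mi(X)$ produced in the proof of Theorem~\ref{tcon}: it satisfies $\dom(\tau)=\spa(\al)$, $\ima(\tau)=\spa(\bt)$, $\tau\inv\al\tau=\bt$, and $\tau\bt\tau\inv=\al$. Fix any bijection $\rho\colon X\sm\spa(\al)\to X\sm\spa(\bt)$ and let $\sig$ be the union of the functions $\tau$ and $\rho$, whose domains $\spa(\al)$ and $X\sm\spa(\al)$ partition $X$. Then $\sig$ is a permutation of $X$: it is defined on all of $X$, it is injective because $\tau$ and $\rho$ have disjoint images $\spa(\bt)$ and $X\sm\spa(\bt)$, and these images together cover $X$.

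It remains to check $\sig\inv\al\sig=\bt$, equivalently $\al\sig=\sig\bt$, which I would verify pointwise using Notation~\ref{nzero}. For $x\in\dom(\al)$ we have $x,x\al\in\spa(\al)$, so $\sig$ agrees with $\tau$ at both points, and $x(\al\sig)=(x\al)\tau=x(\al\tau)=x(\tau\bt)=(x\tau)\bt=(x\sig)\bt$, using $\al\tau=\tau\bt$ from Proposition~\ref{prp:sapir}. For $x\in\spa(\al)\sm\dom(\al)$ we have $x\sig=x\tau\notin\dom(\bt)$ by Lemma~\ref{ltau}(3), so both $x(\al\sig)$ and $x(\sig\bt)$ equal $\mz$. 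For $x\notin\spa(\al)$ we have $x\sig=x\rho\notin\spa(\bt)\supseteq\dom(\bt)$, so again both sides equal $\mz$. Hence $\al\sig=\sig\bt$, and multiplying on the left by $\sig\inv$ gives $\sig\inv\al\sig=\bt$, that is, (iii). The main obstacle in this argument is the cardinality computation $|\spa(\al)|=|\spa(\bt)|$; once the span-complements are known to be equinumerous, extending $\tau$ to a permutation and verifying the conjugacy identity is essentially a recapitulation of the computation already carried out in the proof of Theorem~\ref{tcon}.
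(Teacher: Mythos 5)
Your proof is correct and follows essentially the same route as the paper: extend the partial conjugator $\tau$ to a permutation $\sig$ by adjoining a bijection between $X\sm\spa(\al)$ and $X\sm\spa(\bt)$ (which are equinumerous since $X$ is finite and $|\spa(\al)|=|\spa(\bt)|$), then verify $\al\sig=\sig\bt$ pointwise. The only cosmetic difference is that you close the cycle via (ii)$\implies$(iii), reading $|\spa(\al)|=|\spa(\bt)|$ off the common type and using the specific $\tau$ built in Theorem~\ref{tcon}, whereas the paper proves (i)$\implies$(iii) from an arbitrary conjugator and gets the span equality from Proposition~\ref{pcon}.
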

\begin{proof}
Conditions (i) and (ii) are equivalent by Theorem~\ref{tcon}, and (iii) clearly implies (i).
It remains to show that (i) implies (iii). Suppose (i) holds, that is, $\tau\inv \al\tau=\bt$ and $\tau\bt\tau\inv =\al$
for some $\tau\in\mi(X)$. By Proposition~\ref{pcon}, $\tau$ maps $\spa(\al)$ onto $\spa(\bt)$.
Thus $|\spa(\al)|=|\spa(\bt)|$, and so, since $X$ is finite, $|X\setminus\spa(\al)|=|X\setminus\spa(\bt)|$.
We fix a bijection $f:X\setminus\spa(\al)\to X\setminus\spa(\bt)$ and define $\sig:X\to X$ by
\[
x\sig=
\left\{\begin{array}{ll}
x\tau & \mbox{if $x\in\spa(\al)$},\\
xf & \mbox{if $x\in(X\setminus\spa(\al))$}.
\end{array}\right.
\]
Clearly, $\sig\in\sym(X)$.
Let $x\in X$. If $x\not\in\spa(\bt)$, then $x\sig\inv \notin\spa(\al)$, and so
$x(\sig\inv \al\sig)=\mz\sig=\mz=x\bt$.
Suppose $x\in(\ima(\bt)\setminus\dom(\bt))$. Then $x\tau\inv =x\sig\inv $ (by the definition of $\sig$)
and $x\tau\inv \notin\dom(\al)$ (by Lemma~\ref{ltau}). Thus,
$x(\sig\inv \al\sig)=x(\tau\inv \al\sig)=\mz\sig=\mz=x\bt$.
Suppose $x\in\dom(\bt)$. Then $x\tau\inv =x\sig\inv $ and $x\tau\inv \in\dom(\al)$.
Hence, $(x\tau\inv )\al\in\ima(\al)$, and so $((x\tau\inv )\al)\tau=((x\tau\inv )\al)\sig$.
Therefore, $x(\sig\inv \al\sig)=x(\tau\inv \al\tau)=x\bt$.

We have proved that $x(\sig\inv \al\sig)=x\bt$ for all $x\in X$, and so (i) implies (iii).
\end{proof}

The equivalence of (ii) and (iii) is stated in \cite[p.~120]{DiHa03}.
Proposition~\ref{pper} is not true for an infinite set $X$. Let $X=\{1,2,3,\ldots\}$
and consider $\al=[2\,3\,4\ldots\ran$ and $\bt=[1\,2\,3\ldots\ran$ in $\mi(X)$.
Then $\al$ and $\bt$ are conjugate by Theorem~\ref{tcon}. Note that $1\notin\dom(\al)$
and $\dom(\bt)=X$. Thus, by Lemma~\ref{ltau}(3),
if $\tau\in\mi(X)$ is such that $\tau\inv \al\tau=\bt$ and $\tau\bt\tau\inv =\al$,
then $1\not\in\dom(\tau)$. Consequently, (iii) is not satisfied.

\subsection{Conjugacy in the ideals of $\mi(X)$}
We have already dealt with the conjugacy in $\mi(X)$ (Theorem~\ref{tcon}). Here, we will describe
the conjugacy in an arbitrary proper (that is, different from $\mi(X)$) ideal of $\mi(X)$.
For $\al\in\mi(X)$, the \emph{rank} of $\al$ is the cardinality of $\ima(\al)$. Since $\al$ is injective,
we have $\rank(\al)=|\ima(\al)|=|\dom(\al)|$.
For a cardinal $r$ with $0<r\leq|X|$, let $\jr=\{\al\in\mi(X):\rank(\al)<r\}$. Then the set $\{\jr:0<r\leq|X|\}$ consists of
of all proper ideals of $\mi(X)$ \cite{Li53}.

\begin{theorem}
\label{tide1}
Let $\jr$ be a proper ideal of $\mi(X)$, where $r$ is finite, and let $\al,\bt\in\jr$.
Then $\al$ and $\bt$ are conjugate in $\jr$ if and only if they have they same cycle-chain type
and $|\spa(\al)|<r$.
\end{theorem}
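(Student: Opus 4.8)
The plan is to prove the two implications separately, using Theorem~\ref{tcon} (characterization of $\ci$ in $\mi(X)$) to reduce everything to statements about cycle-chain type together with a bookkeeping condition on the size of the span. The key observation driving the whole argument is that a conjugating element $\tau$ in $\jr$ must satisfy $\spa(\al)\subseteq\dom(\tau)$ and $\spa(\bt)\subseteq\ima(\tau)$ (Lemma~\ref{ltau}(1)), so that $\rank(\tau)\geq|\spa(\al)|$; since $\tau\in\jr$ forces $\rank(\tau)<r$, we immediately get $|\spa(\al)|<r$, and symmetrically $|\spa(\bt)|<r$.

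\emph{Necessity.} Suppose $\al$ and $\bt$ are conjugate in $\jr$, say $\tau\in\jr$ with $\tau\inv\al\tau=\bt$ and $\tau\bt\tau\inv=\al$. Then a fortiori $\al\ci\bt$ in $\mi(X)$, so by Theorem~\ref{tcon} they have the same cycle-chain-ray type; since $\al,\bt\in\jr$ have finite rank they have no rays, so in fact they have the same cycle-chain type. The bound $|\spa(\al)|<r$ follows from the span/rank inequality noted above.

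\emph{Sufficiency.} Suppose $\al$ and $\bt$ have the same cycle-chain type and $|\spa(\al)|<r$. First note that the same-type hypothesis forces $|\spa(\al)|=|\spa(\bt)|$ (count the spans from the form of the decomposition), so $|\spa(\bt)|<r$ as well. Now mimic the construction in the proof of Theorem~\ref{tcon}: the matching bijections $f_k:\dka\to\dkb$ and $g_k:\tka\to\tkb$ (no rays are present) produce an injective $\tau$ with $\dom(\tau)=\spa(\al)$, $\ima(\tau)=\spa(\bt)$, $\tau\inv\al\tau=\bt$, and $\tau\bt\tau\inv=\al$. The point is that \emph{this} $\tau$ already lies in $\jr$: $\rank(\tau)=|\dom(\tau)|=|\spa(\al)|<r$. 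Hence $\tau$ witnesses conjugacy of $\al$ and $\bt$ inside $\jr$.

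\emph{Main obstacle.} The only subtlety is that in the sufficiency direction we must use the \emph{minimal} conjugator (the one supported exactly on $\spa(\al)$) rather than extending it to a permutation as in Proposition~\ref{pper}; extending would in general raise the rank out of $\jr$. So the crux is recognizing that the $\tau$ built in the proof of Theorem~\ref{tcon} has rank exactly $|\spa(\al)|$, and that the hypothesis $|\spa(\al)|<r$ is precisely what is needed to keep it in the ideal. No further computation is required beyond re-invoking the verification already carried out in the proof of Theorem~\ref{tcon}.
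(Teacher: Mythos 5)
Your proof is correct and follows essentially the same route as the paper: necessity via Lemma~\ref{ltau}(1) giving $|\spa(\al)|\leq\rank(\tau)<r$, and sufficiency by reusing the conjugator constructed in the proof of Theorem~\ref{tcon}, whose domain is exactly $\spa(\al)$ and hence has rank below $r$. Your closing remark about needing the minimal conjugator rather than a permutation is a correct observation but does not change the argument.
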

\begin{proof}
Suppose $\al\ci\bt$ in $\jr$. Then $\al\ci\bt$ in $\mi(X)$, and so $\al$ and $\bt$ have the same cycle-chain
type by Theorem~\ref{tcon}. Let $\tau\in\jr$ such that
$\tau\inv \al\tau=\bt$ and $\tau\bt\tau\inv =\al$. Then, by Lemma~\ref{ltau}, $\spa(\al)\subseteq\dom(\tau)$,
and so $|\spa(\al)|\leq|\dom(\tau)|=\rank(\tau)<r$.

Conversely, suppose that $\al$ and $\bt$ have they same cycle-chain type and $|\spa(\al)|<r$.
Then $\al\ci\bt$ in $\mi(X)$ by Theorem~\ref{tcon}. In the proof of Theorem~\ref{tcon},
we constructed $\tau\in\mi(X)$ such that
$\dom(\tau)=\spa(\al)$, $\tau\inv \al\tau=\bt$, and $\tau\bt\tau\inv =\al$. Since $\rank(\tau)=|\dom(\tau)|=|\spa(\al)|<r$,
we have $\tau\in\jr$, and so $\al\ci\bt$ in~$\jr$.
\end{proof}

We note that for all $\al,\bt\in\jr$, where $r$ is finite,
\[
|\spa(\al)|=\rank(\al)+\mbox{the number of chains in $\al$},
\]
and that if $\al$ and $\bt$ have the same cycle-chain type, then $\rank(\al)=\rank(\bt)$ and $|\spa(\al)|=|\spa(\bt)|$.

As an example, let $X=\{1,\ldots,8\}$ and consider $\al=(1\,2)[3\,4][5\,6\,7]$ and $\bt=(5\,9)[1\,6][3\,8\,7]$ in $\mi(X)$.
Then $\al,\bt\in J_6$ but they are not conjugate in $J_6$ since $|\spa(\al)|=7>6$. Note, however, that $\al\ci\bt$ in $J_8$.

If $r$ is infinite, then the conjugacy $\ci$ in $J_r$ is the restriction of $\ci$ in $\mi(X)$, that is,
for all $\al,\bt\in\jr$, $\al\ci\bt$ in $\jr$ if and only if $\al\ci\bt$ in $\mi(X)$.

\begin{theorem}
\label{tide2}
Let $\jr$ be a proper ideal of $\mi(X)$, where $r$ is infinite, and let $\al,\bt\in\jr$.
Then $\al$ and $\bt$ are conjugate in $\jr$ if and only if they have they same cycle-chain-ray type.
\end{theorem}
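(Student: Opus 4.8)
The plan is to mirror the two directions of Theorem~\ref{tcon}, being careful only about the rank constraint that membership in $\jr$ imposes. The forward direction is immediate: if $\al\ci\bt$ in $\jr$, then $\al\ci\bt$ in $\mi(X)$ via the same $\tau\in\jr\subseteq\mi(X)$, so by Theorem~\ref{tcon} the elements $\al$ and $\bt$ have the same cycle-chain-ray type. No rank bookkeeping is needed here.

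For the converse, suppose $\al,\bt\in\jr$ have the same cycle-chain-ray type. By Theorem~\ref{tcon}, $\al\ci\bt$ in $\mi(X)$, and in fact the proof of that theorem produces a specific witness $\tau\in\mi(X)$ with $\dom(\tau)=\spa(\al)$, $\ima(\tau)=\spa(\bt)$, $\tau\inv\al\tau=\bt$, and $\tau\bt\tau\inv=\al$. So it suffices to check that this particular $\tau$ lies in $\jr$, i.e.\ that $\rank(\tau)=|\dom(\tau)|=|\spa(\al)|<r$. Since $\al\in\jr$ we have $\rank(\al)=|\dom(\al)|=|\ima(\al)|<r$. Now $\spa(\al)=\dom(\al)\cup\ima(\al)$, so $|\spa(\al)|\le|\dom(\al)|+|\ima(\al)|=2\rank(\al)$. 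The point is that $r$ is infinite: any cardinal strictly below an infinite cardinal $r$ is either finite or an infinite cardinal $<r$, and in both cases doubling it keeps it $<r$ (for finite ranks, $2\rank(\al)$ is finite hence $<r$; for infinite ranks, $2\rank(\al)=\rank(\al)<r$). Hence $|\spa(\al)|<r$, so $\tau\in\jr$, and therefore $\al\ci\bt$ in $\jr$.

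I do not expect any genuine obstacle here; the only subtlety — and the reason the hypothesis ``$r$ infinite'' is essential (contrast Theorem~\ref{tide1}, where the extra condition $|\spa(\al)|<r$ cannot be dropped) — is the cardinal-arithmetic observation that $\spa(\al)$ can be strictly larger than $\rank(\al)$ yet is still absorbed below an infinite $r$. One could phrase the whole converse more slickly by just invoking the remark already made in the text that for infinite $r$, conjugacy in $\jr$ is the restriction of conjugacy in $\mi(X)$; but since that remark is stated without proof immediately before the theorem, it is cleaner to give the self-contained argument above, which in fact proves that remark as well.
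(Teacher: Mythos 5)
Your proposal is correct and follows essentially the same route as the paper: the forward direction reduces immediately to Theorem~\ref{tcon}, and the converse reuses the witness $\tau$ with $\dom(\tau)=\spa(\al)$ constructed there, checking $\rank(\tau)=|\spa(\al)|\le\rank(\al)+\rank(\al)<r$ via the infinitude of $r$. Your case split on whether $\rank(\al)$ is finite or infinite is in fact a slightly more careful justification of the cardinal inequality than the paper's one-line ``$<r+r=r$.''
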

\begin{proof}
If $\al\ci\bt$ in $\jr$, then $\al\ci\bt$ in $\mi(X)$, and so $\al$ and $\bt$ have the same cycle-chain-ray
type by Theorem~\ref{tcon}.
Conversely, suppose that $\al$ and $\bt$ have they same cycle-chain-ray type.
Then $\al\ci\bt$ in $\mi(X)$ by Theorem~\ref{tcon}. In the proof of Theorem~\ref{tcon},
we constructed $\tau\in\mi(X)$ such that
$\dom(\tau)=\spa(\al)$, $\tau\inv \al\tau=\bt$, and $\tau\bt\tau\inv =\al$. Since $\spa(\al)=\dom(\al)\cup\ima(\al)$,
we have $|\spa(\al)|\leq|\dom(\al)|+|\ima(\al)|=\rank(\al)+\rank(\al)<r+r=r$ (since $r$ is infinite).
Thus $\rank(\tau)=|\dom(\tau)|=|\spa(\al)|<r$. Hence $\tau\in\jr$,
and so $\al\ci\bt$ in $\jr$.
\end{proof}

\subsection{Number of conjugacy classes in $\mi(X)$}
We will now count the conjugacy classes in $\mi(X)$. Of course, we will have to distinguish between
the finite and infinite $X$.

Let $n$ be a positive integer. Recall that a \emph{partition} of $n$ is a sequence $\lan n_1,n_2,\ldots,n_s\ran$
of positive integers such that $n_1\leq n_2\leq\ldots\leq n_s$ and $n_1+n_2+\cdots+n_s=n$. We denote by $p(n)$
the number of partitions of $n$ and define $p(0)$ to be $1$. For example, $n=4$ has five partitions: $\lan1,1,1,1\ran$, $\lan1,1,2\ran$,
$\lan1,3\ran$, $\lan2,2\ran$, and $\lan4\ran$; so $p(4)=5$. Denote by $Q(n)$ the set of sequences
$\lan(i_1,k_1),\ldots,(i_u,k_u)\ran$ of pairs of positive integers such that
$k_1<k_2<\ldots<k_u$ and $i_1k_1+i_2k_2+\cdots+i_uk_u=n$. There is an obvious one-to-one correspondence
between the set of partitions of $n$ and the set $Q(n)$, so $|Q(n)|=p(n)$. For example,
the partition $\lan1,1,2,2,2,2,5\ran$ of $15$ corresponds to $\lan(2,1),(4,2),(1,5)\ran\in Q(15)$.
We define $Q(0)$ to be $\lan(0,0)\ran$.

\begin{nota}
\label{ncr}
Let $X$ be a finite set with $|X|=n$. Then every $\al\in\mi(X)$ can be expressed uniquely as a join $\al=\sig_\al\jo\eta_\al$,
where $\sig_\al$ is either $0$ or a join of cycles, and $\eta_\al$ is either $0$ or a join of chains.
In other words,
$\sig_\al=\join_{\del\in\da}\!\del$ and $\eta_\al=\join_{\tet\in\ta}\!\tet$. For example, if
$\al=(2\,6\,8)\jo[1\,3]\jo[4\,5\,9]$, then $\sig_\al=(2\,6\,8)$ and $\eta_\al=[1\,3]\jo[4\,5\,9]$.
Note that $|\spa(\sig_\al)|=\sum_{k=1}^nk|\dka|$ and $|\spa(\eta_\al)|=\sum_{k=1}^{n-1}(k+1)|\tka|$.

Let $C=\{[\al]_{\ci}:\al\in\mi(X)\}$ be the set of conjugacy classes of $\mi(X)$. For $r\in\{0,1,\ldots,n\}$, denote
by $C_r$ the following subset of $C$:
\begin{equation}\label{ecr}
C_r=\{[\al]_{\ci}\in C:|\spa(\sig_\al)|=r\}.
\end{equation}
By Theorem~\ref{tcon}, each $C_r$ is well defined (if $\al\ci\bt$ then $|\spa(\sig_\al)|=|\spa(\sig_\bt)|$)
and $C_0,C_1,\ldots, C_n$ are pairwise disjoint.
\end{nota}

\begin{lemma}
\label{lcr}
Let $X$ be a finite set with $n$ elements, let $r\in\{0,1,\ldots,n\}$, and let $C_r$ be the set defined by \eqref{ecr}.
Then $|C_r|=p(r)p(n-r)$.
\end{lemma}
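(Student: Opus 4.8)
The plan is to identify $C_r$ with a Cartesian product of two finite sets of cardinalities $p(r)$ and $p(n-r)$. By Theorem~\ref{tcon}, assigning to a conjugacy class the cycle-chain type of any of its members is a well-defined injection; moreover a candidate cycle-chain type $\lan c_1,\ldots,c_n;d_1,\ldots,d_{n-1}\ran$ (a finite sequence of nonnegative integers) is the type of some $\al\in\mi(X)$ if and only if $\sum_{k=1}^n kc_k+\sum_{k=1}^{n-1}(k+1)d_k\le n$. One direction uses the identities noted in Notation~\ref{ncr}, namely $|\spa(\sig_\al)|=\sum_k k|\da^k|$ and $|\spa(\eta_\al)|=\sum_k(k+1)|\ta^k|$, together with $|\spa(\al)|=|\spa(\sig_\al)|+|\spa(\eta_\al)|\le|X|=n$; for the converse one partitions a subset of $X$ into $c_k$ blocks of size $k$ and $d_k$ blocks of size $k+1$ and builds the corresponding cycles and chains. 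A class belongs to $C_r$ precisely when $\sum_k kc_k=|\spa(\sig_\al)|=r$, and for such a type the realizability condition becomes $\sum_k(k+1)d_k\le n-r$. Hence $C_r$ is in bijection with the Cartesian product $A\times B$, where $A$ is the set of sequences $\lan c_1,\ldots,c_n\ran$ of nonnegative integers with $\sum_k kc_k=r$ and $B$ is the set of sequences $\lan d_1,\ldots,d_{n-1}\ran$ of nonnegative integers with $\sum_k(k+1)d_k\le n-r$.

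Counting $A$ is immediate: reading $c_k$ as the multiplicity of the part $k$ identifies $A$ with $Q(r)$ (the constraint $r\le n$ means that allowing parts up to $n$ costs nothing), so $|A|=|Q(r)|=p(r)$. Counting $B$ is the step that needs a short argument. I would set up a bijection between $B$ and the set of partitions of $n-r$ as follows: to $\lan d_1,\ldots,d_{n-1}\ran\in B$ associate the partition of $n-r$ in which the part $j$ has multiplicity $d_{j-1}$ for each $j\ge 2$ and the part $1$ has multiplicity $(n-r)-\sum_k(k+1)d_k$; the inverse recovers $d_k$ as the multiplicity of the part $k+1$ and forgets the number of parts equal to $1$. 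One checks that the part sizes sum to $n-r$, and that the multiplicity of the part $1$ being nonnegative is exactly the defining inequality of $B$. Conceptually, a chain of length $k$ occupies $k+1\ge 2$ points, so chains account for the parts of size at least $2$, while the $(n-r)-\sum_k(k+1)d_k$ elements of $X\sm\spa(\al)$ are forced to appear as parts of size $1$. Thus $|B|=p(n-r)$.

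Combining the two counts gives $|C_r|=|A|\cdot|B|=p(r)\,p(n-r)$. The argument contains no genuine obstacle; the only point requiring care is the verification in the second paragraph that the displayed correspondence between $B$ and the partitions of $n-r$ is well defined and bijective --- in particular that there is no real freedom in how the ``slack'' $n-r-\sum_k(k+1)d_k$ is distributed, since it is forced to consist of parts of size $1$.
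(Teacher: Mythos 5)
Your proposal is correct and follows essentially the same route as the paper's proof: both encode the cycle data as a partition of $r$ via multiplicities, and both use the key observation that a chain of length $k$ occupies $k+1\ge 2$ points while the elements of $X\sm\spa(\al)$ supply the parts of size $1$, so that the chain data together with the unused points encode a partition of $n-r$. The only difference is cosmetic (you count realizable types $A\times B$ and then biject $B$ with partitions of $n-r$, whereas the paper maps $C_r$ directly onto $Q(r)\times Q(n-r)$), so there is nothing further to add.
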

\begin{proof}
Let $[\al]_{\ci}\in\ C_r$.
Let $K=\{k\in\{1,\ldots,n\}:\dka\ne\emptyset\}$. Write
$K=\{k_1,k_2,\ldots,k_u\}$ with $k_1<k_2<\ldots<k_u$ ($u=0$ if $K=\emptyset$). For $p\in\{1,\ldots,u\}$,
let $i_p=|\da^{k_p}|$. By (\ref{econf}), the sequence $\lan(i_1,k_1),\ldots,(i_u,k_u)\ran$
(which we define to be $\lan(0,0)\ran$ if $K=\emptyset$) does not depend
on the choice of a representative in $[\al]_{\ci}$ and
\begin{equation}\label{lcre1}
i_1k_1+\cdots+i_uk_u=\sum_{k=1}^nk|\dka|=|\spa(\sig_\al)|=r.
\end{equation}
Let $L=\{l\in\{1,\ldots,n\}:\mbox{$l\geq2$ and $\ta^{l-1}\ne\emptyset$ or $l=1$ and $X\setminus\spa(\al)\ne\emptyset$}\}$.
(The reason we include $l$ when $\ta^{l-1}\ne\emptyset$ is that there are $l$ points in the span of each chain
$[x_0\,x_1\ldots\,x_{l-1}]$ from $\ta^{l-1}$;
and we include $1$ when $X\setminus\spa(\al)\ne\emptyset$ because $X\setminus\spa(\al)$ consists of single points.)
Write
$L=\{l_1,l_2,\ldots,l_v\}$ with $l_1<l_2<\ldots<l_v$ ($v=0$ if $L=\emptyset$). For $q\in\{1,\ldots,v\}$,
let $j_q=|\ta^{l_q-1}|$ (if $l_q\geq2$) and $j_q=|X\setminus\spa(\al)|$ (if $l_q=1$).
By (\ref{econf}), the sequence $\lan(j_1,l_1),\ldots,(j_v,l_v)\ran$
(which we define to be $\lan(0,0)\ran$ if $L=\emptyset$)
does not depend
on the choice of a representative in $[\al]_{\ci}$ and
\begin{equation}\label{lcre2}
j_1l_1+\cdots+j_vl_v=\sum_{l=1}^{n-1}(l+1)|\ta^l|+|X\setminus\spa(\al)|=|\spa(\eta_\al|+|X\setminus\spa(\al)|=n-r
\end{equation}
(since $|\spa(\sig_\al)|+|\spa(\eta_\al)|+|X\setminus\spa(\al)|=n$).

Define a function $f:C_r\to Q(r)\times Q(n-r)$ (see the paragraph before Notation~\ref{ncr}) by
\[
([\al]_{\ci})f=(\lan(i_1,k_1),\ldots,(i_u,k_u)\ran,\lan(j_1,l_1),\ldots,(j_v,l_v)\ran).
\]
Then $f$ is well defined and one-to-one by (\ref{econf}), (\ref{lcre1}), and (\ref{lcre2}).
Let
\[
(\lan(i_1,k_1),\ldots,(i_u,k_u)\ran,\lan(j_1,l_1),\ldots,(j_v,l_v)\ran)\in Q(r)\times Q(n-r).
\]
Then
we can find $\al\in\mi(X)$ that has $i_p$ cycles of length $k_p$ (for each $p\in\{1,\ldots,u\}$)
and $j_q$ chains of length $l_q-1$ (for each $q\in\{1,\ldots,v\}$ such that $l_q\geq2$).
For such an $\al$, $[\al]_{\ci}\in C_r$ and
\[
([\al]_{\ci})f=(\lan(i_1,k_1),\ldots,(i_u,k_u)\ran,\lan(j_1,l_1),\ldots,(j_v,l_v)\ran),
\]
so $f$ is onto. Hence $f$ is a bijection, and so $|C_r|=|Q(r)\times Q(n-r)|=|Q(r)||Q(n-r)|=p(r)p(n-r)$.
\end{proof}

If $X$ is a finite set with $n$ elements, then the symmetric group $\sym(X)$ has
$p(n)$ conjugacy classes \cite[Prop.~11, p.~126]{DuFo04}. The following theorem,
which counts the conjugacy classes in the symmetric inverse semigroup $\mi(X)$, generalizes this result.

\begin{theorem}
\label{tnuf}
Let $X$ be a finite set with $n$ elements. Then $\mi(X)$ has $\sum_{r=0}^np(r)p(n-r)$ conjugacy classes.
\end{theorem}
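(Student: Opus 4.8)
The plan is to assemble the count directly from the machinery already in place, since Theorem~\ref{tnuf} is really a bookkeeping consequence of Theorem~\ref{tcon} and Lemma~\ref{lcr}. Let $C=\{[\al]_{\ci}:\al\in\mi(X)\}$ denote the set of conjugacy classes. First I would verify that $C=C_0\cup C_1\cup\cdots\cup C_n$. For every $\al\in\mi(X)$, the element $\sig_\al$ is a join of cycles whose span is contained in the $n$-element set $X$, so $|\spa(\sig_\al)|=\sum_{k=1}^{n}k|\dka|$ is an integer $r$ with $0\le r\le n$; hence $[\al]_{\ci}\in C_r$ for this $r$. This shows the $C_r$ cover $C$. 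By Notation~\ref{ncr} (which invokes Theorem~\ref{tcon}: conjugate elements have equal values of $|\spa(\sig_\al)|$), the sets $C_0,C_1,\ldots,C_n$ are also pairwise disjoint, so they form a partition of $C$.

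Next I would simply apply Lemma~\ref{lcr}, which gives $|C_r|=p(r)p(n-r)$ for each $r\in\{0,1,\ldots,n\}$, and conclude
\[
|C|=\sum_{r=0}^{n}|C_r|=\sum_{r=0}^{n}p(r)p(n-r),
\]
which is the asserted number of conjugacy classes of $\mi(X)$.

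I do not anticipate any real obstacle: the substantive content has been discharged in Theorem~\ref{tcon} (the characterization of $\ci$ by cycle-chain type) and in Lemma~\ref{lcr} (the enumeration of $C_r$ via the bijection onto $Q(r)\times Q(n-r)$). The one point deserving an explicit sentence is the completeness of the partition $C=\bigcup_r C_r$ — that no conjugacy class fails to land in some $C_r$ — and this is immediate from $\spa(\sig_\al)\subseteq X$ and $|X|=n$. It may also be worth remarking, as a sanity check against the classical case, that setting aside all classes with chains (i.e.\ restricting attention to $C_n$) recovers the $p(n)$ conjugacy classes of $\sym(X)$, consistent with the $r=n$ term $p(n)p(0)=p(n)$ of the sum.
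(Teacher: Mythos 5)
Your proposal is correct and follows essentially the same route as the paper's own proof: partition the set of conjugacy classes into $C_0,\ldots,C_n$ via Notation~\ref{ncr} and then sum the counts $|C_r|=p(r)p(n-r)$ from Lemma~\ref{lcr}. The extra sentences verifying that every class lands in some $C_r$ and the sanity check against $\sym(X)$ are harmless additions but not a different argument.
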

\begin{proof}
Let $C$ be the set of conjugacy classes of $\mi(X)$. Then $C=C_0\cup C_1\cup\ldots\cup C_n$
and $C_0,C_1,\ldots,C_n$ are pairwise disjoint (see Notation~\ref{ncr}.)
The result follows by Lemma~\ref{lcr}.
\end{proof}

For example, if $n=5$, then the number of conjugacy classes of $\mi(X)$ is
\[
\sum_{r=0}^5p(r)p(5-r)=1\cdot7+1\cdot5+2\cdot3+3\cdot2+5\cdot1+7\cdot1=36.
\]

We will now count the conjugacy classes in $\mi(X)$ for an infinite $X$. First, we need the following lemma.
We denote by $\aep$ the infinite cardinal indexed by the ordinal $\vep$ \cite[p.~131]{HrJe99}.

\begin{lemma}
\label{laal}
Let $X$ be an infinite set with $|X|=\aep$ and let $\al\in\mi(X)$.
Then for all $k\geq1$ and all $A\in\{\dka,\tka,\oa,\ua,\la\}$,
$|A|\leq\aep$.
\end{lemma}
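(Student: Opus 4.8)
The plan is to bound the number of basic transformations of each type contained in $\al$ by the cardinality of a suitable subset of $X$, and then invoke standard cardinal arithmetic. The key observation is that if $A$ is any of the sets $\dka$, $\tka$, $\oa$, $\ua$, or $\la$, then the elements of $A$ are pairwise completely disjoint (by Proposition~\ref{pdec}), and each such element $\eta$ has nonempty span $\spa(\eta)\subseteq\spa(\al)\subseteq X$. Choosing one point $x_\eta\in\spa(\eta)$ for each $\eta\in A$ therefore produces an injection $\eta\mapsto x_\eta$ from $A$ into $X$, since completely disjoint transformations have disjoint spans. Hence $|A|\le|X|=\aep$.

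So the proof is essentially one paragraph: fix $k\ge1$ and $A\in\{\dka,\tka,\oa,\ua,\la\}$; observe that the transformations in $A$ occur in the cycle-chain-ray decomposition \eqref{edec} of $\al$, so they are pairwise completely disjoint; for each $\eta\in A$ pick some $x_\eta\in\spa(\eta)$ (possible since every basic partial injective transformation has nonempty span, as is visible from Definition~\ref{dbas}); note that $\eta\ne\eta'$ in $A$ implies $\spa(\eta)\cap\spa(\eta')=\emptyset$, hence $x_\eta\ne x_{\eta'}$; conclude that $\eta\mapsto x_\eta$ is an injection $A\to\spa(\al)\subseteq X$, so $|A|\le|X|=\aep$. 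If $\al=0$ the sets are all empty and the bound is trivial, so we may assume $\al\ne0$.

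I do not anticipate a genuine obstacle here; the statement is a routine cardinality bound once one uses pairwise complete disjointness. The only point requiring a moment's care is that one should not conflate ``at most $\aep$ basic transformations of a given type'' with ``at most $\aep$ basic transformations altogether''—but since we are only asked for the per-type bound, and each individual span is nonempty and the spans are disjoint within a fixed $A$, the argument is clean. One could alternatively note that the map sending $\eta$ to $\spa(\eta)$ embeds $A$ into a family of pairwise disjoint nonempty subsets of $X$, and any such family has cardinality at most $|X|$; this is the same argument phrased slightly differently.

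Explicitly, the proof I would write is: \emph{If $\al=0$, then $\dka=\tka=\oa=\ua=\la=\emptyset$ by convention, so the inequality is trivial. Assume $\al\ne0$ and let $A\in\{\dka,\tka,\oa,\ua,\la\}$. By Proposition~\ref{pdec}, the transformations occurring in the cycle-chain-ray decomposition of $\al$ are pairwise completely disjoint, so in particular the elements of $A$ are pairwise completely disjoint. Each $\eta\in A$ is a basic partial injective transformation, hence $\spa(\eta)\ne\emptyset$; choose $x_\eta\in\spa(\eta)$. If $\eta,\eta'\in A$ with $\eta\ne\eta'$, then $\spa(\eta)\cap\spa(\eta')=\emptyset$, so $x_\eta\ne x_{\eta'}$. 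Thus $\eta\mapsto x_\eta$ is an injection from $A$ into $\spa(\al)\subseteq X$, and therefore $|A|\le|X|=\aep$.}
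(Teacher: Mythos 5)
Your proof is correct and takes essentially the same approach as the paper: both arguments rest on the pairwise complete disjointness of the elements of $A$, the paper bounding $|A|$ by computing the cardinality of the union $\bigcup_{\eta\in A}\spa(\eta)\subseteq X$ (which equals $|A|$ times the common span size, hence is at least $|A|$), while you inject $A$ into $X$ by choosing one point from each nonempty span. The two arguments are interchangeable, and yours has the minor advantage of treating all five cases uniformly without cardinal multiplication.
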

\begin{proof}
Suppose $A=\oa$.
Let $Z=\bigcup_{\ome\in\oa}\spa(\ome)\subseteq X$. Since the elements of $\oa$ are pairwise completely disjoint
and $|\spa(\ome)|=\ale_0$ for every $\ome\in\oa$, we have
\[
\aep=|X|\geq|Z|=|\bigcup_{\ome\in\oa}\spa(\ome)|=|\oa|\cdot\ale_0\geq|\oa|.
\]
Thus $|\oa|\leq\aep$. The proofs for the remaining values of $A$ are similar.
\end{proof}

For sets $A$ and $B$, we denote by $A^B$ the set of all functions from $B$ to $A$.

\begin{theorem}
\label{tcci}
Let $X$ be an infinite set with $|X|=\aep$. Let $\kappa=\ale_0+|\vep|$. Then $\mi(X)$ has
$\kappa^{\ale_0}$ conjugacy classes.
\end{theorem}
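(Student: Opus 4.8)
The goal is to count the conjugacy classes of $\mi(X)$ where $|X|=\aep$. By Theorem~\ref{tcon}, conjugacy classes are in bijection with the possible cycle-chain-ray types, i.e.\ with the sequences
\[
\lan|\da^1|,|\da^2|,\ldots;|\ta^1|,|\ta^2|,\ldots;|\oa|,|\ua|,|\la|\ran
\]
that actually arise from some $\al\in\mi(X)$. So the plan is: (1) identify exactly which sequences (indexed by the ordinal $2\ome+3$) are realizable; (2) count them; (3) show the count equals $\kappa^{\ale_0}$.

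First I would pin down realizability. Each coordinate is a cardinal, and by Lemma~\ref{laal} every coordinate of a realizable type is $\leq\aep$. Conversely, given \emph{any} sequence of cardinals each $\leq\aep$, I would argue it is realized: there are only countably many coordinate slots; for each slot pick a pairwise completely disjoint family of basic transformations of the prescribed type and prescribed cardinality (at most $\aep$ of them, each with span of size $\leq\ale_0$), and take the join of everything. The total span has size at most $\ale_0\cdot\aep\cdot\ale_0=\aep=|X|$, so it embeds into $X$; this produces an $\al\in\mi(X)$ with the given type. Hence the realizable types are \emph{exactly} the functions from the index set (a countable set, of cardinality $\ale_0$) into the set of cardinals $\leq\aep$.

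Next, count. The set of cardinals $\mu$ with $\mu\leq\aep$ is $\{0,1,2,\ldots\}\cup\{\ale_\gamma:\gamma\leq\vep\}$; its cardinality is $\ale_0+|\{\gamma:\gamma\leq\vep\}|=\ale_0+|\vep+1|=\ale_0+|\vep|=\kappa$. (Here I use that $|\vep+1|=|\vep|$ when $\vep$ is infinite, and if $\vep$ is finite then $\ale_0+|\vep|=\ale_0$ absorbs the difference.) Therefore the number of conjugacy classes is the number of functions from an $\ale_0$-element set to a $\kappa$-element set, namely $\kappa^{\ale_0}$. This gives Theorem~\ref{tcci}.

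The only real subtlety is the cardinal-arithmetic bookkeeping for the index set and for the set of admissible cardinals, together with the verification that every candidate type is genuinely realized by a partial injective transformation on a set of size exactly $\aep$ (this uses that a join of $\leq\aep$ many basic transformations, each with countable span, has span of size $\leq\aep$, hence fits inside $X$, and conversely one can always fill $X$ out since idle points of $X$ do not affect the type). Once realizability is established, the count is immediate from Theorem~\ref{tcon}. I expect the main obstacle to be stating the realizability direction cleanly — making sure the disjoint families can be chosen simultaneously for all (countably many) slots without exceeding $|X|$ — but this is a routine disjointification argument given $\ale_0\cdot\aep\cdot\ale_0=\aep$.
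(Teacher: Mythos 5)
Your proposal is correct and follows essentially the same route as the paper: reduce to counting realizable cycle-chain-ray types via Theorem~\ref{tcon}, bound each coordinate by Lemma~\ref{laal}, and use $\ale_0\cdot\aep=\aep$ to realize sequences, giving $|M|^{\ale_0}=\kappa^{\ale_0}$. The only (cosmetic) difference is that you establish a full bijection by realizing \emph{every} admissible type, whereas the paper sidesteps that surjectivity claim by pairing the upper-bound injection with a separate lower-bound injection that realizes only sequences of cycle counts.
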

\begin{proof}
Let $M$ be the set of all cardinals $\mu$ such that $\mu\leq\aep$. Then $M$ consists of $\ale_0$
finite cardinals and $|\vep|+1$ infinite cardinals, hence $|M|=\ale_0+|\vep|+1=\ale_0+|\vep|=\kappa$.
Let $C$ be the set of conjugacy classes of $\mi(X)$.
Define a function $f:C\to M^{\mathbb N}$, where $\mathbb N=\{1,2,3,\ldots\}$, by
\[
([\al)]_{\ci})f=\lan|\oa|,|\ua|,|\la|,|\da^1|,|\ta^1|,|\da^2|,|\ta^2|,|\da^3|,|\ta^3|,\ldots\ran.
\]
By Theorem~\ref{tcon} and Lemma~\ref{laal},
$f$ is well defined and one-to-one. Thus $|C|\leq|M^{\mathbb N}|=|M|^{|\mathbb N|}=\kappa^{\ale_0}$.

We next define a one-to-one
function $g:M^{\mathbb N}\to C$. Let
\[
p=\lan\mu_1,\mu_2,\mu_3,\ldots\ran\in M^{\mathbb N}.
\]
Let $\mu=\sum_{k=1}^\infty k\mu_k$
(see \cite[Ch.~9]{HrJe99}). For every $k\geq1$, $k\mu_k\leq\aep$ (since $\mu_k\leq\aep$ and $\aep$ is infinite). Thus
\[
\mu=\sum_{k=1}^\infty k\mu_k\leq\ale_0\cdot\aep=\aep.
\]
Hence, there is a collection $\{X_k\}_{k\geq1}$ of pairwise disjoint subsets of $X$ such that
$|X_k|=k\mu_k$ for every $k\geq1$. Let $k\geq1$. Since $|X_k|=k\mu_k$, there is a collection
$\Delta^{\!k}$ of $k$-cycles in $\mi(X)$ such that $|\Delta^{\!k}|=\mu_k$ and $\spa(\join_{\del\in\Delta^k}\!\del)=X_k$.
Let $\al_k=\join_{\del\in\Delta^k}\!\del$ and let $\al_p=\join_{k\geq1}\!\al_k\in\mi(X)$.
We define $g:M^{\mathbb N}\to C$ by $pg=[\al_p]_{\ci}$.

Suppose $\al_p\ci\al_s$, where $p,s\in M^{\mathbb N}$. Then, by the definition of $g$, both $\al_p$ and $\al_s$
are joins of cycles and
\[
\lan|\Delta_{\al_p}^1|,|\Delta_{\al_p}^2|,|\Delta_{\al_p}^3|,\ldots\ran=\lan\mu_1,\mu_2,\mu_3,\ldots\ran=
\lan|\Delta_{\al_s}^1|,|\Delta_{\al_s}^2|,|\Delta_{\al_s}^3|,\ldots\ran.
\]
It follows from Theorem~\ref{tcon} that $g$ is one-to-one.
Hence
$|C|\geq|M^{\mathbb N}|=|M|^{|\mathbb N|}=\kappa^{\ale_0}$.
The result follows.
\end{proof}

As an example, suppose $|X|=\ale_{\ome_1}$, where $\ome_1$ is the least uncountable ordinal. Then
$\ale_0+|\ome_1|=\ale_0+\ale_1=\ale_1$, and so
the number of conjugacy classes in $\mi(X)$ is $\ale_1^{\ale_0}=2^{\ale_0}$.
(Clearly $2^{\ale_0}\leq\ale_1^{\ale_0}$. On the other hand, $\ale_1^{\ale_0}\leq(2^{\ale_0})^{\ale_0}=2^{\ale_0\ale_0}=2^{\ale_0}$.)
By a similar argument, if $|X|=\aep$, where $\vep$ is any countable ordinal or any ordinal of cardinality $\ale_1$, then
$\mi(X)$ has $2^{\ale_0}$ conjugacy classes. (The axioms of set theory cannot decide where in the
aleph hierarchy the cardinal $2^{\ale_0}$ occurs. If one assumes the Continuum Hypothesis, then $2^{\ale_0}=\ale_1$.)

\section{Conjugacy in Free Inverse Semigroups}
\label{sec:free}
\setcounter{equation}{0}
For a nonempty set $X$ (finite or infinite), denote by $\fr(X)$ the \emph{free inverse semigroup} on $X$.
In this section, we will show that for every $w\in\fr(X)$, the conjugacy class of $w$ is finite
(Theorem~\ref{tfr1}). It will then follow that the conjugacy problem in $\fr(X)$ is decidable (Theorem~\ref{tfr2}).
We also characterize those $w\in\fr(X)$ whose conjugacy class is a singleton (Proposition~\ref{psin}).

Let $X$ be a non-empty set. We say that an inverse semigroup
$F$ is a \emph{free inverse semigroup} on $X$
if it satisfies the following properties:
\begin{itemize}
\item[(1)] $X$ generates $F$;
\item[(2)] for every inverse semigroup $S$ and every mapping $\phi:X\to S$, there is an extension of $\phi$
to a homomorphism $\overline{\phi}:F\to S$.
\end{itemize}
Since $X$ generates $F$, an extension $\overline{\phi}$ is necessarily unique. It is well known
that a free inverse semigroup on $X$ exists and is unique \cite[\S5.10]{Ho95}. We will denote this unique object
by $\fr(X)$. The semigroup $\fr(X)$ can be constructed as follows \cite[Thm.~5.10.1]{Ho95}.
Let $X\inv =\{x\inv :x\in X\}$ be a set that is disjoint from $X$, let $Y=X\cup X\inv $, and let
$Y^+$ be the free semigroup on $Y$. For every $y\in Y$, we define $y\inv $ to be $x\inv $ if $y=x\in X$,
and to be $x$ if $y=x\inv \in X\inv $. Then $\fr(X)$ is isomorphic to to the quotient semigroup
$Y^+/\tau$, where $\tau$ is the smallest congruence on $Y^+$ that contains the relation
$\{(xx\inv x,x):x\in Y\}\cup\{(xx\inv yy\inv ,yy\inv xx\inv ):x,y\in Y\}$.
We will represent the congruence classes modulo $\tau$ (the elements of $\fr(X)$)
by their representatives, that is, for $w\in Y^+$, we will write $w\in\fr(X)$
instead of $w\tau\in\fr(X)$. Moreover, for $w_1,w_2\in Y^+$, we will write
$w_1=w_2$ both when $w_1\tau=w_2\tau$ (that is, when $w_1$ and $w_2$ are equal as elements of $\fr(X)$),
and when $w_1$ and $w_2$ are equal as words in $Y^+$. It should always be clear from the context
which equality is meant. For $w=x_1x_2\ldots x_n\in\fr(X)$ $(x_i\in Y)$, the unique
inverse of $w$ in $\fr(X)$ is $w\inv =x_n\inv \ldots x_2\inv x_1\inv $.

For $w\in Y^+$, we denote by $|w|$ the length of $w$ (that is, the number letters in $w$), by $i(w)$ the first letter in~$w$,
and by $t(w)$ the last letter in $w$. We say that $w$ is \emph{reduced} if it does not contain any subword $xx\inv $, where $x\in Y$.
For example, if $w=aba\inv ab\inv $, then $|w|=5$, $i(w)=a$, $t(w)=b\inv $, and $w$ is not reduced. We also consider the empty
word $1$, with $|1|=0$. In the free group on $X$ (which can be defined by $Y^*/\rho$, where $Y^*=Y^+\cup\{1\}$
and $\rho$ is the smallest congruence on $Y^*$ that contains the relation $\{(xx\inv ,1):x\in Y\}$),
each congruence class modulo $\rho$ contains exactly one reduced word \cite[p.~3]{LySc77}. The situation is more complicated
when one considers the congruence classes of $\fr(X)$. However, Poliakova and Schein \cite{PoSc05} have proved
that each congruence class of $\fr(X)$ contains a word of a certain type, which they called a canonical word,
and showed how to convert effectively any word $w\in Y^+$ to a canonical word that is in the congruence class of~$w$.
Moreover, the canonical words contained in the same congruence class are precisely the shortest words in that class.
Throughout this section, we will rely on this representation. We begin with two definitions \cite[Def.~1 and~4]{PoSc05}.

\begin{defi}
\label{dcid}
(Canonical Idempotents)
\begin{itemize}
  \item[(i)] The empty word is a \emph{canonical idempotent}, which has no \emph{factors}.
  \item[(ii)] If $e$ is a canonical idempotent, $x\in Y$, and the first letters of the factors of $e$ are different from $x$,
then $x\inv ex$ is both a canonical idempotent and a \emph{prime canonical idempotent}. This canonical idempotent
is its only factor.
  \item[(iii)] If $e_1,\ldots,e_m$, where $m\geq1$, are prime canonical idempotents and their first letters
are pairwise distinct, then $e_1\ldots e_m$ is a canonical idempotent, which has $e_1,\ldots,e_m$ as its factors.
\end{itemize}
\end{defi}
For example, if $X=\{a,b,c,\ldots\}$, then $e=(a(b\inv b)a\inv )(cc\inv )$ is a canonical idempotent
with factors $a(b\inv b)a\inv $ and $cc\inv $. (The parentheses are used for convenience only and they are not part
of the word.)

\begin{defi}
\label{dcwo}
(Canonical Words) A word $w\in Y^+$ is called a \emph{canonical word} if $w=u_0e_1u_1\ldots e_mu_m$,
where $m\geq0$, and
\begin{itemize}
  \item[(1)] $u_1,\ldots,u_{m-1}$ are not empty and $u_0\ldots u_m$ is either empty or reduced;
  \item[(2)] $e_1,\ldots,e_m$ are nonempty canonical idempotents;
  \item[(3)] for every $i\in\{1,\ldots,m\}$, the last letter of $u_{i-1}$ is different from the last letters
of the factors of $e_i$;
  \item[(4)] for every $i\in\{1,\ldots,m\}$, the first letter of $u_i$ is different from the first letters
of the factors of $e_i$.
\end{itemize}
\end{defi}
As in \cite{PoSc05}, $u_0\ldots u_m$ is called the \emph{root} of $w$, denoted by $R(w)$,
$u_0,\ldots,u_m$ are the \emph{root pieces}, and $e_1,\ldots,e_m$ the \emph{idempotent pieces} of $w$.
Whenever we write $w=u_0e_1u_1\ldots e_mu_m$, we will mean $w$ to be
in canonical form.

For example, $w=a\inv b(ab\inv ba\inv cc\inv )c\inv ab\inv a(abb\inv a\inv )ba$ is a canonical word
with three root pieces $u_0=a\inv b$, $u_1=c\inv ab\inv a$, and $u_2=ba$ and
two idempotent pieces (enclosed in parentheses) $e_1=(a(b\inv b)a\inv )(cc\inv )$ and $e_2=a(bb\inv )a\inv $.

The following lemma summarizes \cite[The Main Theorem]{PoSc05} and \cite[The Main Lemma]{PoSc05}.

\begin{lemma}\label{lmain}
~~~
\begin{itemize}
  \item[\rm(1)] Let $e$ and $f$ be canonical idempotents in $Y^+$. Then $e=f$ in $\fr(X)$ if and only if
$f$ can be obtained from $e$ by applying the operation
of commuting adjacent subwords that are canonical idempotents finitely many times.
  \item[\rm(2)] Let $w=u_0e_1u_1\ldots e_mu_m$ and $w'=v_0f_1v_1\ldots f_nv_n$ be canonical words in $Y^+$. Then
$w=w'$ in $\fr(X)$ if and only if $m=n$, $u_i=v_i$ in $Y^*$ for every $i\in\{0,1,\ldots,m\}$,
and $e_i=f_i$ in $\fr(X)$ for every $i\in\{1,\ldots,m\}$.
  \item[\rm(2)] For every $u\in Y^+$, there is a canonical word $w\in Y^+$ such that $u=w$ in $\fr(X)$. In each
congruence class of $\tau$, the canonical words and the shortest words are the same.
\end{itemize}
\end{lemma}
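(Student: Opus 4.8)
The plan is straightforward, since Lemma~\ref{lmain} is a transcription of the two principal results of Poliakova and Schein \cite{PoSc05} into the notation fixed above, with no new mathematics involved. The first item is their normal-form characterization of when two canonical idempotents of $Y^+$ represent the same element of $\fr(X)$, the only available moves being the commutation of adjacent canonical-idempotent subwords. The second item is the uniqueness of the decomposition $w=u_0e_1u_1\ldots e_mu_m$ up to equality of the idempotent pieces $e_i$ in $\fr(X)$. The third item asserts that every $u\in Y^+$ is equal in $\fr(X)$ to some canonical word and that within a $\tau$-class the canonical words are exactly the shortest words, and it carries with it the effective conversion procedure of \cite{PoSc05}. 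Between them, these three statements are precisely \cite[The Main Theorem]{PoSc05} and \cite[The Main Lemma]{PoSc05}, so the proof is simply to cite those two results.

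Because Definitions~\ref{dcid} and~\ref{dcwo} are reproduced verbatim from \cite[Def.~1 and~4]{PoSc05}, no reconciliation of terminology is required: each clause of the lemma matches a statement in \cite{PoSc05} once one identifies the congruence $\tau$ with the one presenting $\fr(X)$. The only places that would warrant a line of checking are the degenerate configurations --- the case $m=0$, where a canonical word is simply a reduced word of $Y^+$ (or the empty word) and the assertions collapse to the classical free-group normal form restricted to $Y^+$; and the empty canonical idempotent of Definition~\ref{dcid}(i), which has no factors, so that conditions (3) and (4) of Definition~\ref{dcwo} are vacuous wherever they mention it.

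I do not expect a genuine obstacle, precisely because the content is entirely due to \cite{PoSc05}. If one wanted a self-contained account instead of a citation, the work would be (a) proving that the rewriting system ``commute adjacent canonical idempotents'' is confluent, so that it determines a well-defined normal form on idempotents (needed for the first item), and (b) carrying out the length bookkeeping that shows a canonical word admits no shorter representative (needed for the ``shortest words'' clause of the third item). Neither is difficult, but for the purposes of this paper it suffices to quote \cite{PoSc05}, and that is the route I would take.
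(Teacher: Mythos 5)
Your proposal matches the paper exactly: the lemma is stated there without proof, introduced only as a summary of \cite[The Main Theorem]{PoSc05} and \cite[The Main Lemma]{PoSc05}, so citing those two results is precisely the route the authors take. Your remarks on the degenerate cases and on what a self-contained proof would require are reasonable but not needed for the paper's purposes.
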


For example,
the canonical idempotents $e=a(bb\inv cc\inv )a\inv b\inv b$ and $f=b\inv ba(cc\inv bb\inv )a\inv $
are equal in $\fr(X)$.

Let $w\in\fr(X)$. We now establish for which letters $x\in Y$, $x\inv wx$ is conjugate to $w$.

\begin{defi}
\label{daw}
For a canonical idempotent $e\in Y^+$, we denote by $A_1(e)$ the set of the first letters of the factors of $e$,
and by $A_2(e)$ the set of the last letters of the factors of $e$. Let $w=u_0e_1u_1\ldots e_mu_m\in Y^+$
be a canonical word that is not an idempotent. We define
\begin{align}
A_1(w)&=\left\{\begin{array}{ll}\{x\}&\mbox{if $u_0\ne1$ and $x=i(u_0)$,}\\
A_1(e_1)\cup\{x\}&\mbox{if $u_0=1$ and $x=i(u_1)$,}\end{array}\right.\notag\\
A_2(w)&=\left\{\begin{array}{ll}\{x\}&\mbox{if $u_m\ne1$ and $x=t(u_m)$,}\\
A_2(e_m)\cup\{x\}&\mbox{if $u_m=1$ and $x=t(u_{m-1})$.}\end{array}\right.\notag
\end{align}
For a canonical word $w\in Y^+$ (idempotent or not), we define
\[
A(w)=\{x\in Y:\mbox{$x\in A_1(w)$ and $x\inv \in A_2(w)$}\}.
\]
Finally, for any word $u\in Y^+$ (canonical or not), we define $A(u)$ as $A(w)$,
where $w$ is a canonical word such that $u=w$ in $\fr(X)$. Note that, by Lemma~\ref{lmain},
the definition of $A(u)$ does not depend on the choice of a canonical word $w$ in the congruence class of $u$.
\end{defi}

For example, if $w=(a\inv bb\inv ac\inv c)ab(bb\inv )a\inv cc$,
then $A_1(w)=\{a\inv ,c\inv ,a\}$, $A_2(w)=\{c\}$, and so $A(w)=\{c\inv \}$.

\begin{lemma}
\label{lxxw}
Let $w=u_0e_1u_1\ldots e_mu_m\in\fr(X)$ be canonical and $x\in Y$. Then:
\begin{itemize}
  \item[\rm(1)] $xx\inv w=wxx\inv =w$ if and only if $x\in A(w)$;
  \item[\rm(2)] $x\inv wx$ is a conjugate of $w$ if and only if $x\in A(w)$.
\end{itemize}
\end{lemma}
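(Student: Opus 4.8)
The plan is to prove (1) first, since (2) will follow quickly from (1) together with Proposition~\ref{prp:sapir}. For (1), note that $x \in A(w)$ means $x \in A_1(w)$ and $x\inv \in A_2(w)$, and these two conditions are independent, so I would handle them separately: show $xx\inv w = w$ if and only if $x \in A_1(w)$, and (by the symmetric/dual argument, applying the first statement to $w\inv$ and using $(w\inv)\inv = w$ together with $A_1(w\inv) = \{y\inv : y \in A_2(w)\}$) show $wxx\inv = w$ if and only if $x\inv \in A_2(w)$. So the core is the single claim: \emph{$xx\inv w = w$ in $\fr(X)$ iff $x \in A_1(w)$}, where $A_1(w)$ is read off the canonical form $w = u_0 e_1 u_1 \cdots e_m u_m$ by the case split in Definition~\ref{daw} (the first letter of $u_0$ if $u_0 \neq 1$, or the set $A_1(e_1) \cup \{i(u_1)\}$ if $u_0 = 1$).

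For this core claim I would argue as follows. First suppose $x \in A_1(w)$. In the case $u_0 \neq 1$ with $x = i(u_0)$, one computes $xx\inv w = xx\inv x (\text{rest}) = x(\text{rest}) = w$ using the defining relation $xx\inv x = x$; this is immediate. In the case $u_0 = 1$, so $w = e_1 u_1 \cdots e_m u_m$, I would use Lemma~\ref{lmain}(1): $xx\inv e_1$ is itself (up to the idempotent-commuting moves) a canonical idempotent whenever $x \notin A_1(e_1)$, but more to the point, since $x \in A_1(e_1) \cup \{i(u_1)\}$, prepending $xx\inv$ either duplicates an existing prime factor of $e_1$ beginning with $x$ (and $xx\inv \cdot x\inv\! f x = x x\inv x\inv\! f x$, which after commuting reduces back using idempotency) or, if $x = i(u_1)$ and $x \notin A_1(e_1)$, the letter $x\inv$ from $xx\inv$ can be pushed past $e_1$ (idempotents commute with $xx\inv$ when $x$ is not among their first letters, by Lemma~\ref{lmain}(1)) to meet the $x$ at the front of $u_1$ inside $e_1 u_1$, collapsing via $e_1 x x\inv x = e_1 x$. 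The point is that the canonical-form machinery of Poliakova--Schein guarantees these are all the reductions available, so $xx\inv w = w$.

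For the converse, suppose $xx\inv w = w$ in $\fr(X)$. I would convert $xx\inv w$ to canonical form and invoke the uniqueness in Lemma~\ref{lmain}(2): the canonical form of $xx\inv w$ must be exactly $w = u_0 e_1 u_1 \cdots e_m u_m$ again. Tracking what the standard rewriting does to $xx\inv u_0 e_1 \cdots$ shows that for the leading $xx\inv$ to be absorbed, $x$ must match the first symbol of the material that follows — precisely $i(u_0)$ when $u_0 \neq 1$, and otherwise $x$ must be absorbable into $e_1$ (forcing $x \in A_1(e_1)$) or equal to $i(u_1)$ after pushing through $e_1$. That is exactly $x \in A_1(w)$. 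The main obstacle I anticipate is this converse direction: making rigorous that "no other cancellation is possible" requires careful bookkeeping with the canonical-word normal form and Lemma~\ref{lmain}, rather than a one-line computation; the forward direction and the reduction of (2) to (1) are routine.

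Finally, for part (2): by Lemma~\ref{lxxw}(1) applied in both directions, $x \in A(w)$ iff $xx\inv w = w = wxx\inv$. Set $g = x$, $a = w$, $b = x\inv w x$. If $x \in A(w)$, then conditions (v) and (vi) of Proposition~\ref{prp:sapir} hold for $(a,g) = (w,x)$ — namely $w \cdot xx\inv = w$ and $xx\inv \cdot w = w$ — and (i) $x\inv w x = b$ holds by definition of $b$; so by Proposition~\ref{prp:sapir}(b)$\implies$(a) we get $w \ci x\inv w x$. Conversely, if $x\inv w x$ is a conjugate of $w$, then there is $g \in \fr(X)^1$ with $g\inv w g = x\inv w x$ and $g(x\inv w x)g\inv = w$; however I want $g = x$ specifically. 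Here I would argue directly: one checks $x\inv\!(x\inv w x)x$ versus $w$, but more simply, observe that $w \ci x\inv w x$ via the witness $g = x$ is what must be established, so instead I show: if there \emph{exists} any witness making $x\inv w x$ conjugate to $w$, then in fact $xx\inv w = w = w x x\inv$, hence $x \in A(w)$ by part (1). This last implication follows because $w \ci x\inv w x$ with witness $x$ would require (by Proposition~\ref{prp:sapir}, (a)$\implies$(b)) that $w \cdot xx\inv = w$ and $xx\inv \cdot w = w$; but to even set up $g = x$ as a candidate witness we verify (i) holds trivially, and the conjugacy hypothesis with \emph{some} witness $g$ forces the relevant idempotent conditions on $x$ via length/canonical-form considerations — alternatively, and cleanly, note $x\inv w x$ conjugate to $w$ together with the specific form $x\inv w x$ forces $xx\inv w x x\inv = w$ after one more application of the relations, whence both $xx\inv w = w$ and $w xx\inv = w$ by examining canonical forms. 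I expect part (2) to be short once (1) is in hand; the real work is entirely in the converse half of (1).
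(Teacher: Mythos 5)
Your handling of part (1) coincides with the paper's: the same three-case computation for $x\in A_1(w)\implies xx\inv w=w$ (namely $x=i(u_0)$; $u_0=1$ with $x$ the first letter of a factor of $e_1$; $u_0=1$ with $x=i(u_1)$), the second equation by the symmetric argument, and the converse via uniqueness of canonical forms (Lemma~\ref{lmain}). Two remarks there. A factor of $e_1$ whose first letter is $x$ has the form $xfx\inv$ (prime canonical idempotents are $y\inv e y$), not $x\inv fx$ as you wrote; with the correct form the computation is simply $xx\inv\cdot xfx\inv=xfx\inv$ and no commuting of idempotents is needed in that case. Also, the ``careful bookkeeping'' you anticipate in the converse is unnecessary: when $x\notin A_1(w)$, the word $xx\inv w$ is \emph{already} canonical --- with $m+1$ idempotent pieces if $u_0\ne1$, and with first idempotent piece $xx\inv e_1\ne e_1$ if $u_0=1$ --- so Lemma~\ref{lmain}(2) gives $xx\inv w\ne w$ at once; this, in contrapositive form, is exactly the paper's argument. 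So your assessment that ``the real work is entirely in the converse half of (1)'' is inverted: that step is short, and the delicate point lies elsewhere.

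The genuine gap is the ``only if'' half of part (2): you must show that if $x\inv wx\ci w$ via \emph{some} witness $g$, then $xx\inv w=w=wxx\inv$. Your proposed derivation is circular --- you appeal to what ``$w\ci x\inv wx$ with witness $x$ would require,'' but whether $x$ is a witness is precisely what is at issue, and the claim that the conjugacy ``forces $xx\inv wxx\inv=w$ after one more application of the relations'' is unsupported. Indeed this implication is \emph{not} a formal consequence of Proposition~\ref{prp:sapir} in an inverse semigroup: in the bicyclic monoid $\Bi$ take $a=(0,0)$ and $x=(1,0)$; then $x\inv ax=a$, so certainly $x\inv ax\ci a$, yet $xx\inv a=(1,1)(0,0)=(1,1)\ne a$. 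Any correct proof must therefore use something particular to $\fr(X)$ --- for instance, that conjugation preserves the length of the canonical idempotent $ww\inv$ (equivalently, the size of the Munn tree), via Lemma~\ref{lem:same_g}, combined with the observation that this length strictly increases when passing from $w$ to $x\inv wx$ with $x\notin A(w)$. In fairness, the paper's own justification of (2) is the single sentence that it follows from (1) and Proposition~\ref{prp:sapir}, which is transparent only for the weaker statement ``$x$ itself witnesses $w\ci x\inv wx$ if and only if $x\in A(w)$'' (the version actually used in Lemmas~\ref{lstr} and~\ref{lpath}); your proof of the ``if'' half via Proposition~\ref{prp:sapir}(b)$\implies$(a) with $g=x$ is correct and is exactly that argument. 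But as a proof of the statement as literally written, your part (2) has a real hole that length/canonical-form considerations must fill, and you do not fill it.
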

\begin{proof}
Suppose $x\in A(w)$. Then $x\in A_1(w)$ and $x\inv \in A_2(w)$.
If $x=i(u_0)$, then $w=xv$, and so $xx\inv w=xx\inv xv=xv=w$. If $u_0=1$ and $xfx\inv $ is a factor of $e_1$,
then $xx\inv w=w$ since $e_1=xfx\inv h$, and so $xx\inv e_1=e_1$. Finally, if $u_0=1$ and $u_1=xv$, then $xx\inv w=w$ since
$xx\inv e_1xv=e_1xx\inv xv=e_1xv$. We have proved that $xx\inv w=w$ using the fact that $x\in A_1(w)$.
Similarly, $x\inv \in A_2(w)$ implies $wxx\inv =w$.

Conversely, suppose that $x\notin A(w)$, that is, $x\notin A_1(w)$
or $x\inv \notin A_2(w)$. Suppose $x\notin A_1(w)$. If $u_0\ne1$ with $x\ne i(u_0)$, then
$xx\inv w=xx\inv u_0e_1u_1\ldots e_mu_m$ has $m+1$ idempotent pieces, and so $xx\inv w\ne w$ by Lemma~\ref{lmain}.
Suppose $u_0=1$, $x$ is not the first letter of any factor of $e_1$, and $x\ne i(u_1)$.
Then $xx\inv w=xx\inv e_1u_1\ldots e_mu_m$ is canonical with the first idempotent piece $xx\inv e_1$.
By Lemma~\ref{lmain}, $e_1\ne xx\inv e_1$, and so $w\ne xx\inv w$. Similarly, if $x\inv \notin A_2(w)$,
then $wxx\inv \ne w$.

We have proved (1). Statement (2) follows from (1) and Proposition~\ref{prp:sapir}.
\end{proof}

\begin{lemma}
\label{lstr}
Let $w,w'\in\fr(X)$. Then $w\ci w'$ if and only if there are $w_0,w_1,\ldots,w_k$ in $\fr(X)$ and
$x_1,\ldots,x_k$ in $Y$, where $k\geq0$, such that $w_0=w$, $w_k=w'$, $w_0\ci w_1\ci\ldots\ci w_k$, and
$w_i=x_i\inv w_{i-1}x_i$ for every $i\in\{1,\ldots,k\}$.
\end{lemma}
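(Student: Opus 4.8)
The reverse implication is immediate: given such $w_0,\dots,w_k$ and $x_1,\dots,x_k$, transitivity of $\ci$ (Lemma~\ref{lem:equiv}) yields $w = w_0 \ci w_k = w'$. So the content lies in the forward implication, which I would prove by peeling the conjugator apart one letter at a time and checking that each single-letter step is already a $\ci$-step.

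Suppose $w \ci w'$, witnessed by $g \in \fr(X)^1$ with $g\inv w g = w'$ and $g w' g\inv = w$. If $g = 1$ then $w = w'$ and we are done with $k = 0$. Otherwise fix a representative word $g = x_1 x_2 \cdots x_n$ with each $x_i \in Y$ and $n \ge 1$, and put $w_i = (x_1\cdots x_i)\inv w\,(x_1\cdots x_i)$ for $i = 0,1,\dots,n$. Then $w_0 = w$, $w_n = g\inv w g = w'$, and $w_i = x_i\inv w_{i-1} x_i$, so the statement reduces to showing $w_{i-1} \ci w_i$ for each $i$. By Lemma~\ref{lxxw}(2) this is equivalent to $x_i \in A(w_{i-1})$, and by Lemma~\ref{lxxw}(1) it suffices to prove the idempotent identities $x_i x_i\inv w_{i-1} = w_{i-1} x_i x_i\inv = w_{i-1}$.

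To establish these I would start from $g g\inv w = w$ and $w g g\inv = w$, which follow from $g w' g\inv = w$ (or from Proposition~\ref{prp:sapir}). Writing $h_i = x_1\cdots x_{i-1}$ and $t_i = x_{i+1}\cdots x_n$ so that $g = h_i x_i t_i$, and using the elementary inverse-semigroup fact $(ab)(ab)\inv \le aa\inv$ (a two-line computation with commuting idempotents), I get $g g\inv \le (h_i x_i)(h_i x_i)\inv = h_i x_i x_i\inv h_i\inv$, whence $h_i x_i x_i\inv h_i\inv\, w = w = w\, h_i x_i x_i\inv h_i\inv$. Conjugating these by $h_i$, and using that the idempotent $h_i\inv h_i$ commutes with $x_i x_i\inv$ and is absorbed by $w_{i-1} = h_i\inv w h_i$ on either side (together with $h_i h_i\inv h_i = h_i$ and $x_i x_i\inv x_i = x_i$), collapses them exactly to $x_i x_i\inv w_{i-1} = w_{i-1}$ and $w_{i-1} x_i x_i\inv = w_{i-1}$. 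Then Lemma~\ref{lxxw}(1) gives $x_i \in A(w_{i-1})$, Lemma~\ref{lxxw}(2) gives $w_{i-1} \ci x_i\inv w_{i-1} x_i = w_i$, and we obtain the chain $w_0 \ci w_1 \ci \cdots \ci w_n$.

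The one place needing care is this last reduction: one must correctly locate the idempotent $h_i x_i x_i\inv h_i\inv$ above $g g\inv$ in the natural partial order and then conjugate back by $h_i$ without picking up extraneous idempotent factors, and it is precisely the commutativity of idempotents in an inverse semigroup that makes this go through. Everything else is formal bookkeeping.
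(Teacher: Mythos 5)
Your proof is correct and follows essentially the same strategy as the paper: factor the conjugator $g$ into letters, form the telescoping sequence $w_i=(x_1\cdots x_i)\inv w(x_1\cdots x_i)$, and reduce each single-letter step to Lemma~\ref{lxxw}. The only difference is in how the absorption conditions $x_ix_i\inv w_{i-1}=w_{i-1}x_ix_i\inv=w_{i-1}$ are obtained: the paper handles only the first letter, where $gg\inv\leq x_1x_1\inv$ gives them immediately, and then recurses by induction on the length of $g$, whereas you verify them at every intermediate stage directly by conjugating $gg\inv\leq h_ix_ix_i\inv h_i\inv$ by the prefix $h_i$ and using commuting idempotents --- a computation that does go through exactly as you describe.
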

\begin{proof}
Suppose $w\ci w'$. Then, there is
$u=x_1\ldots x_k\in\fr(X)$, where $k\geq0$ and $x_i\in Y$,
such that $w'=u\inv wu=x_k\inv \ldots x_1\inv wx_1\ldots x_k$ and $w=uw'u\inv=x_1\ldots x_kw'x_k\inv \ldots x_1\inv$.
Set $w_0=w$, $w_k=w'$, and $w_i=x_i\inv w_{i-1}x_i$ for each $i\in\{1,\ldots,k-1\}$. Note that if $k\geq1$,
then $w_k=x_k\inv w_{k-1}x_k$. We claim that $w_0\ci w_1\ci\ldots\ci w_k$.
The claim is true for $k=0$ since $w\ci w'$.
Let $k\geq1$. Since
$w=x_1\ldots x_kx_k\inv \ldots x_1\inv wx_1\ldots x_kx_k\inv \ldots x_1\inv$, we have $x_1\in A(w)$.
Thus, by Lemma~\ref{lxxw}, $w\ci x_1\inv wx_1=w_1$. Thus $w_1\ci w'$, and the claim follows by induction on $k$.

The converse is true since $\ci$ is transitive.
\end{proof}

\begin{prop}
\label{psin}
Let $w=u_0e_1u_1\ldots e_mu_m\in\fr(X)$ be canonical. Then:
\begin{itemize}
  \item[\rm(1)] $w$ has finitely many conjugates of the form $x\inv wx$, where $x\in Y$;
  \item[\rm(2)] $[w]_{\ci}=\{w\}$ if and only if $A(w)=\emptyset$.
\end{itemize}
\end{prop}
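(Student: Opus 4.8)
My plan is to deduce both statements from Lemmas~\ref{lxxw} and~\ref{lstr}, with essentially all the work going into showing that a nonempty $A(w)$ actually yields a conjugate of $w$ different from $w$. For part~(1): by Definition~\ref{daw} one has $A(w)\subseteq A_1(w)$, and $A_1(w)$ is finite --- it is either a single letter of the root $R(w)$, or that letter together with the finitely many first letters of the factors of $e_1$ (and for an idempotent $w$ it is simply the set of first letters of the factors, again finite). By Lemma~\ref{lxxw}(2), an element $x\inv wx$ with $x\in Y$ is a conjugate of $w$ precisely when $x\in A(w)$, so $w$ has at most $|A(w)|<\infty$ conjugates of that form.

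For part~(2), the implication $A(w)=\emptyset\Rightarrow[w]_{\ci}=\{w\}$ is immediate: given $w'\ci w$, apply Lemma~\ref{lstr} to get $w_0,\ldots,w_k\in\fr(X)$ and $x_1,\ldots,x_k\in Y$ with $w_0=w$, $w_k=w'$, $w_0\ci\cdots\ci w_k$, and $w_i=x_i\inv w_{i-1}x_i$; if $k\ge1$ then $w_1=x_1\inv wx_1$ is a conjugate of $w$ of the stated form, forcing $x_1\in A(w)=\emptyset$ by Lemma~\ref{lxxw}(2) --- impossible --- so $k=0$ and $w'=w$.

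For the converse I would argue the contrapositive: assume $A(w)\ne\emptyset$, fix $x\in A(w)$, and (using Lemma~\ref{lxxw}(2)) note $x\inv wx\in[w]_{\ci}$; it then suffices to prove that $x\inv wx\ne w$, and \emph{this is the main obstacle}. From Lemma~\ref{lxxw}(1) we have $xx\inv w=w$ and $wxx\inv=w$. The cleanest route is the Munn birooted-tree model of $\fr(X)$: these two identities say exactly that in the underlying word-tree $\Gamma$ of $w$, with initial vertex $\alpha$ and terminal vertex $\beta$, both $\alpha$ and $\beta$ have an outgoing edge labelled $x$, say to vertices $p$ and $q$; since $\Gamma$ is a tree, $p\ne\alpha$. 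Building the tree of $x\inv wx$ by reading $x\inv$, then $w$, then $x$, one only ever traverses these two already-present edges, so $x\inv wx$ is represented by the very same tree $\Gamma$ with distinguished vertices $p$ and $q$ in place of $\alpha$ and $\beta$; as $p\ne\alpha$ this is a different birooted tree, hence $x\inv wx\ne w$. (An alternative, staying within the Poliakova--Schein canonical form: mapping $\fr(X)$ onto the free group on $X$ by extending $\id_X$ via the universal property kills all idempotents and sends $w$ to the reduced word $R(w)=u_0u_1\cdots u_m$, so $x\inv wx=w$ would make $R(w)$ commute with $x$, whence $R(w)=x^k$; a finite case analysis of the canonical words with $R(w)=x^k$, $x\in A_1(w)$ and $x\inv\in A_2(w)$, using $x\inv xx\inv=x\inv$, $xx\inv x=x$ and the idempotent-commuting rule of Lemma~\ref{lmain}(1), rewrites $x\inv wx$ in canonical form, and Lemma~\ref{lmain} then shows it is not $w$.) With $x\inv wx\ne w$ established we get $[w]_{\ci}\supsetneq\{w\}$, and both parts are done. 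The one genuinely delicate point is this final combinatorial step, whichever model one uses to carry it out.
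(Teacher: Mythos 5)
Your handling of part (1) and of the direction $A(w)=\emptyset\Rightarrow[w]_{\ci}=\{w\}$ coincides with the paper's: both rest on Lemma~\ref{lxxw} plus the finiteness of $A(w)$, respectively on Lemma~\ref{lstr}. The genuine divergence is at the step you correctly single out as the crux, namely $x\in A(w)\Rightarrow x\inv wx\ne w$. The paper dispatches this in one line by asserting $x\notin A(x\inv wx)$; you instead pass to the Munn birooted-tree model, observing that $xx\inv w=w=wxx\inv$ forces the vertices $p=\alpha x$ and $q=\beta x$ into the word tree $\Gamma$ of $w$, that the tree of $x\inv wx$ is then $\Gamma$ rerooted at $(p,q)$, and that $p\ne\alpha$. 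This is correct, but one point should be made explicit: to pass from $p\ne\alpha$ to ``different element'' you need the rigidity of word trees (a label-preserving isomorphism of subtrees of the Cayley graph of the free group is a left translation, and a finite tree is invariant under no nontrivial translation). Your second, ``in-framework'' argument via the projection $w\mapsto R(w)$ is only a sketch --- the promised case analysis for $R(w)=x^k$ is never carried out --- so the Munn-tree argument is the one actually doing the work, at the cost of importing machinery the paper deliberately avoids in favor of the Poliakova--Schein canonical forms.

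Your caution at this step is in fact warranted, because the paper's one-line claim is false in general. Take $w=xxyx\inv x\inv$ with $y\ne x^{\pm1}$: this is reduced, hence canonical, and $A(w)=\{x\}$. One checks (either directly, or via the paper's own Lemma~\ref{lxxw}(1), since $xx\inv\cdot x\inv wx=x\inv wx=x\inv wx\cdot xx\inv$) that the canonical form of $x\inv wx$ is $(x\inv x)\,xyx\inv\,(x\inv x)$ and that $A(x\inv wx)=\{x,x\inv\}\ni x$, contradicting the asserted $x\notin A(x\inv wx)$. The conclusion $x\inv wx\ne w$ still holds there --- the roots $xyx\inv$ and $xxyx\inv x\inv$ differ, so Lemma~\ref{lmain} separates them --- but the paper's justification does not establish it. So your proposal is correct modulo the rigidity remark, takes a genuinely different route at the decisive point, and that route is more robust than the one in the paper.
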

\begin{proof}
Statement (1) follows from Lemma~\ref{lxxw} and the fact that $A(w)$ is finite.
If $A(w)=\emptyset$, then $[w]_{\ci}=\{w\}$ by Lemmas~\ref{lstr} and~\ref{lxxw}.
Suppose $A(w)\ne\emptyset$, and let $x\in A(w)$. Then $x\inv wx\ci w$ by Lemma~\ref{lxxw}.
Since $x\inv wx=x\inv u_0e_1u_1\ldots e_mu_mx$, we have $x\notin A(x\inv wx)$. Hence $x\inv wx\ne w$, and so
$[w]_{\ci}\ne\{w\}$.
\end{proof}

Our next objective is to prove that the conjugacy class of any $w\in\fr(X)$ is finite.

\begin{prop}
\label{pfin}
Let $e=e_1\ldots e_k\in\fr(X)$ be a canonical idempotent. Then the conjugacy class of $e$ is finite.
\end{prop}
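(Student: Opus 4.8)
The plan is to prove that every element of $[e]_{\ci}$ is a canonical idempotent of length exactly $|e|$ all of whose letters already occur in $e$. Since $e$ involves only finitely many letters of $Y$ and every element of $\fr(X)$ has a canonical representative that is a shortest word in its class (Lemma~\ref{lmain}), this bounds $[e]_{\ci}$ by the finite number of words of length $|e|$ over a finite alphabet, which gives the result.

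The heart of the argument is a single conjugation step: \emph{if $f=f_1\cdots f_k$ is a canonical idempotent and $x\in A(f)$, then $x\inv fx$ is again a canonical idempotent with $|x\inv fx|=|f|$, and every letter of $x\inv fx$ already occurs in $f$.} Since $x\in A(f)$ we have $x\in A_1(f)$, so $x$ is the first letter of some factor of $f$; commuting factors via Lemma~\ref{lmain}(1) we may assume this factor is $f_1$, so $f_1=xgx\inv$ where, by Definition~\ref{dcid}(ii), $g$ is a canonical idempotent whose factors have first letters different from $x\inv$. Moreover $f_2\cdots f_k$ is a canonical idempotent whose factors have first letters different from $x$ (the factors of $f$ have pairwise distinct first letters). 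Using that idempotents commute and $x\inv xx\inv=x\inv$, one computes
\[
x\inv fx \;=\; x\inv(xgx\inv)(f_2\cdots f_k)x \;=\; g\,\bigl(x\inv(f_2\cdots f_k)x\bigr),
\]
and by Definition~\ref{dcid}(ii) the word $x\inv(f_2\cdots f_k)x$ is a prime canonical idempotent with first letter $x\inv$ (this includes the case $k=1$, reading $f_2\cdots f_k$ as the empty word, and the case $g=1$). Hence $g\,(x\inv(f_2\cdots f_k)x)$ is a product of prime canonical idempotents with pairwise distinct first letters, i.e.\ a canonical idempotent; comparing lengths and using $|f_1|=|g|+2$ gives $|x\inv fx|=|f_1|+\cdots+|f_k|=|f|$, and no new letters appear.

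With this claim I would induct along the chain provided by Lemma~\ref{lstr}: if $e\ci w'$ there are $w_0=e,w_1,\ldots,w_n=w'$ with $w_0\ci w_1\ci\cdots\ci w_n$ and $w_i=x_i\inv w_{i-1}x_i$, and since $w_{i-1}\ci w_i$, Lemma~\ref{lxxw}(2) forces $x_i\in A(w_{i-1})$. Assuming inductively that $w_{i-1}$ is a canonical idempotent of length $|e|$ with all letters occurring in $e$, observe that $A(w_{i-1})\subseteq A_1(w_{i-1})$, which is the set of first letters of factors of $w_{i-1}$, hence a set of letters of $w_{i-1}$, hence of letters of $e$; the single-step claim then makes $w_i$ a canonical idempotent of length $|e|$ with all letters occurring in $e$. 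Thus $w'=w_n$ has these properties too.

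Consequently every element of $[e]_{\ci}$ has a canonical representative of length $|e|$ over the finite set of letters of $e$, so $[e]_{\ci}$ is finite. I expect the single-step claim to be the only nontrivial point: one must verify carefully, using Definition~\ref{dcid}, that the first-letter restrictions on the factors of $g$ and of $f_2\cdots f_k$ are exactly what is needed for $g\,(x\inv(f_2\cdots f_k)x)$ to be in canonical-idempotent form (so that the length really is preserved); the induction and the counting afterwards are routine.
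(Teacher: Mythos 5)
Your proposal is correct and follows essentially the same route as the paper: a single conjugation step $x\inv fx$ with $x\in A(f)$ turns the factor $xgx\inv$ into $g$ and wraps the remaining factors into a new prime idempotent $x\inv(f_2\cdots f_k)x$, preserving canonical-idempotent form, length, and letter set, and then one counts words of bounded length over a finite alphabet. Your version merely makes explicit the reduction to single steps via Lemma~\ref{lstr} and the verification of the first-letter conditions from Definition~\ref{dcid}, which the paper's proof leaves implicit.
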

\begin{proof}
Suppose $x\inv ex$ is a conjugate of $e$, where $x\in Y$. Then $x\in A(e)$, and so
some factor $e_i$ of $e$ must have the form $e_i=xhx\inv $, where $h$ is a canonical idempotent.
We may assume that $i=1$. Then $x\inv ex=x\inv xhx\inv e_2\ldots e_kx=hx\inv e_2\ldots e_kx$.
Since $hx\inv e_2\ldots e_kx$ is canonical with $|hx\inv e_2\ldots e_kx|=|e|$, it follows that
every element of $[e]_{\ci}$ can be expressed as a canonical idempotent that has the same letters and length
as $e$. Since there are only finitely many words that have the same letters and length, the result follows.
\end{proof}

\begin{conj}
\label{cfin}
Let $e\in\fr(X)$ be a canonical idempotent. Then the conjugacy class of $e$ has $\frac{|e|}2+1$ elements.
\end{conj}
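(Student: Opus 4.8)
The plan is to reformulate the statement using Munn trees. To each canonical idempotent $e$ I would attach a finite subtree $M_e$ of the Cayley graph of the free group $F(X)$, containing the vertex $1$, defined by recursion following Definition~\ref{dcid}: the empty idempotent gives $M_e=\{1\}$; a product $e_1\cdots e_k$ of prime canonical idempotents gives the union of the $M_{e_i}$ with their roots glued at $1$; and $x\inv e'x$ gives $M_{e'}$ translated so that its root sits at the vertex $x\inv$, together with the new root $1$ and the edge from $1$ to $x\inv$. By Lemma~\ref{lmain}(1) this descends to a well-defined map on idempotents of $\fr(X)$ (commuting adjacent idempotent factors only permutes the subtrees hanging at a common vertex) and in fact identifies the idempotents of $\fr(X)$ with the finite subtrees of the Cayley graph through $1$. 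A straightforward induction on $|e|$ gives $|V(M_e)|=\tfrac{|e|}{2}+1$, because each basic step adds exactly half as many vertices as letters. So it is enough to exhibit a bijection between $[e]_{\ci}$ and $V(M_e)$.

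First I would record two preliminaries. Any conjugate of an idempotent is an idempotent: if $g\inv ag=b$ and $gbg\inv=a$ with $a=a\inv=a^2$, then $a\cdot gg\inv=a$ by Proposition~\ref{prp:sapir}, so $b^2=g\inv a(gg\inv)ag=g\inv ag=b$ and $b\inv=g\inv a\inv g=b$. Next, combining Lemma~\ref{lstr}, Lemma~\ref{lxxw}(2) and the argument of Proposition~\ref{pfin}: $e\ci e'$ if and only if there is a chain $e=w_0,w_1,\dots,w_k=e'$ of canonical idempotents with $w_i=x_i\inv w_{i-1}x_i$ and $x_i\in A(w_{i-1})$, where $x\in A(w)$ holds precisely when some factor of $w$ has the form $xhx\inv$, that is, precisely when $x$ is a neighbour of the root of $M_w$. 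Rechecking the canonical-form conditions of Definition~\ref{dcwo} for the rearranged word $x\inv wx$ (exactly as in Proposition~\ref{pfin}) then shows that a single conjugation $w\mapsto x\inv wx$ carries $M_w$ to its left translate $x\inv M_w$. So each step of the chain re-roots the tree at a neighbour of its current root, and $M_{w_i}=(x_1\cdots x_i)\inv M_e$; since a Munn tree contains $1$, this forces $x_1\cdots x_i\in V(M_e)$.

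I would then prove $[e]_{\ci}=\{\,v\inv M_e : v\in V(M_e)\,\}$. The inclusion $\subseteq$ is immediate from the previous paragraph. For $\supseteq$, given $v\in V(M_e)$ let $1=v_0,v_1,\dots,v_j=v$ be the geodesic from $1$ to $v$ in $M_e$ and let $y_\ell\in Y$ be the letter with $v_\ell=v_{\ell-1}y_\ell$, so that $v_\ell=y_1\cdots y_\ell$; conjugating $e$ successively by $y_1,\dots,y_j$ is permitted, since at stage $\ell$ the current Munn tree is $(y_1\cdots y_{\ell-1})\inv M_e$ and $y_\ell$ is a neighbour of its root, and after $j$ steps one reaches the idempotent with Munn tree $(y_1\cdots y_j)\inv M_e=v\inv M_e$. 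Hence $v\inv M_e\ci e$. Finally, $v\mapsto v\inv M_e$ is injective on $V(M_e)$: if $v\inv M_e=w\inv M_e$ then $M_e=vw\inv M_e$, and since $F(X)$ acts freely on its Cayley graph, left translation by a nontrivial element has all orbits infinite and so admits no nonempty finite invariant set of vertices, whence $vw\inv=1$, i.e. $v=w$. Therefore $|[e]_{\ci}|=|V(M_e)|=\tfrac{|e|}{2}+1$ (the case $|e|=0$ being trivial, or vacuous if $\fr(X)$ has no such idempotent).

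The hard part will be the middle step: making the dictionary between the word recursion of Definition~\ref{dcid} and the embedded tree $M_e$ fully precise, and in particular verifying that when $x\in A(w)$ the word $x\inv wx$, once put in canonical form as in Proposition~\ref{pfin}, has Munn tree exactly $x\inv M_w$. This is bookkeeping with the canonical-form conditions, but it needs care; granting it, the rest is a short induction plus direct appeals to Lemmas~\ref{lstr} and~\ref{lxxw} and Proposition~\ref{pfin}.
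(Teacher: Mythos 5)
The first thing to say is that the paper does not prove this statement: it appears as Conjecture~\ref{cfin} and is left open (the paper only establishes finiteness of $[e]_{\ci}$, in Proposition~\ref{pfin}). So there is no proof of record to compare yours against; what you have written is a proposed resolution of the conjecture, and in my judgement it is correct in outline. Your strategy --- identify $e$ with its Munn tree $M_e$, verify $|V(M_e)|=\tfrac{|e|}{2}+1$ by induction on the recursion of Definition~\ref{dcid}, show that conjugation by $x\in A(e)$ is exactly re-rooting $M_e$ at the neighbour $x$ of the current root, and then use connectedness of the tree plus freeness of the left action of $F(X)$ on its Cayley graph to put the re-rootings in bijection with $V(M_e)$ --- is the natural one, and each individual claim checks out. (Sanity check: $e=ab\inv ba\inv cc\inv$ has $|e|=6$ and exactly the four conjugates corresponding to the vertices $1$, $a$, $ab\inv$, $c$.)

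Two points need to be nailed down before this is a complete proof. First, you use \emph{both} directions of the correspondence between idempotents of $\fr(X)$ and finite subtrees of the Cayley graph through $1$: well-definedness (equal idempotents have equal trees), so that the conjugates realizing the distinct trees $v\inv M_e\neq w\inv M_e$ are distinct elements, which gives the lower bound on $|[e]_{\ci}|$; and injectivity (equal trees force equal idempotents), so that the map $[e]_{\ci}\to\{v\inv M_e : v\in V(M_e)\}$ is one-to-one, which gives the upper bound. This correspondence is Munn's theorem; it is classical but lies outside the Poliakova--Schein apparatus the paper uses, so it must be cited or re-proved (injectivity amounts to showing a canonical idempotent is determined up to commutation of adjacent factors by its tree --- an easy induction via Lemma~\ref{lmain}(1), but it has to be said). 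Second, the re-rooting step you flag as bookkeeping is obtained most cleanly not from the canonical-form manipulation of Proposition~\ref{pfin} but from the walk description of the Munn tree: $M_{x\inv ex}$ is the set of vertices visited when reading $x\inv ex$ from $1$, namely $\{1\}\cup x\inv M_e$, and this equals $x\inv M_e$ precisely when $x\in V(M_e)$, i.e.\ precisely when $x\in A(e)$ (the neighbours of $1$ in $M_e$ are exactly the first letters of the factors of $e$, which is $A(e)$ for an idempotent). With those two items supplied, your argument settles the conjecture affirmatively.
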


To prove that $[w]_{\ci}$ is finite for a general $w\in\fr(X)$, we will need the following concept
and some lemmas.

\begin{defi}
\label{dtw}
Let $w\in\fr(X)$. A \emph{conjugacy tree} $T(w)$ of $w$ is defined as a rooted tree \cite[p.~188]{HaHi08}
constructed as follows:
\begin{itemize}
  \item[(a)] $w$ is the root of $T(w)$;
  \item[(b)] suppose that the vertices at level $n\geq0$ of $T(w)$ have already been constructed. We construct
the vertices at level $n+1$ as follows. For every vertex $w'$ at level $n$ and every $x\in A(w')$,
we place $x\inv w'x$ as a child of $w'$ provided $x\inv w'x$ does not already occur in $T(w)$.
\end{itemize}
For a vertex $w'$ of $T(w)$, we denote by $S_T(w')$ the rooted subtree of $T(w)$ that has $w'$ as the root
and contains all descendants of $w'$ in $T(w)$.
\end{defi}

As an example, the conjugacy tree of $w=(ab\inv ba\inv cc\inv )c\inv (ab\inv ba\inv )$ is presented in Figure~\ref{f61}.
The word $w$ is canonical with two idempotent pieces (enclosed in parentheses) and one root piece $c\inv $.
Since $A(w)=\{a,c\}$, the root $w$ has two children $w_1=a\inv wa$ and $w_2=c\inv wc$ (with edges
leading to these children labeled by $a$ and $c$). We have $A(w_1)=\{a\inv ,b\inv \}$, but $aw_1a\inv =w$, so
only $w_3=bw_1b\inv $ is placed as a child of $w_1$. We have $A(w_2)=\{c\inv \}$ and $A(w_3)=\{b\}$, but both
$cw_2c\inv =w$ and $b\inv w_3b=w_1$ already occur in $T(w)$, so the tree is completed. Since the vertices of $T(w)$
are precisely the elements of the conjugacy class of $w$ (see Lemma~\ref{ldw} below), $[w]_{\ci}$ has four elements.

\begin{figure}[ht]
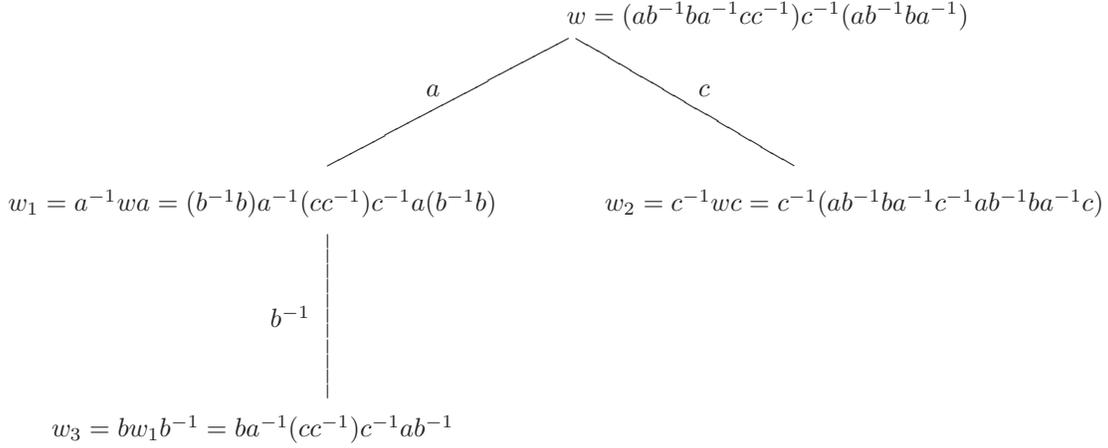

\[
\xy
(38.5,10)*{w=(ab\inv ba\inv cc\inv )c\inv (ab\inv ba\inv )}="a";
(-30,-15)*{w_1=a\inv wa=(b\inv b)a\inv (cc\inv )c\inv a(b\inv b)}="w1";
(50,-15)*{w_2=c\inv wc=c\inv (ab\inv ba\inv c\inv ab\inv ba\inv c)}="w2";
(-30,-45)*{w_3=bw_1b\inv =ba\inv (cc\inv )c\inv ab\inv  }="w1";
(-6,0)*{a}="la";
(30,0)*{c}="lc";
(-25,-30)*{b\inv }="lb";
(12,7)*{}="a1";
(13,7)*{}="a2";
(-20,-20)*{}="a5";
(-20,-10)*{}="a3";
(42,-10)*{}="a4";
(-20,-40)*{}="a6";
"a1";"a3" **\crv{} ?>* \dir{};
"a2";"a4" **\crv{} ?>* \dir{};
"a5";"a6" **\crv{} ?>* \dir{};
\endxy
\]
\caption{A conjugacy tree.}\label{f61}
\end{figure}

The following lemma follows immediately from Definition~\ref{dtw}, Lemmas~\ref{lxxw} and~\ref{lstr}, and the fact that $A(u)$ is finite
for every $u\in\fr(X)$.

\begin{lemma}
\label{ldw}
Let $w\in\fr(X)$. Then:
\begin{itemize}
  \item[\rm(1)] the vertices of $T(w)$ are precisely the elements of the conjugacy class of $w$;
  \item[\rm(2)] every vertex of $T(w)$ has finitely many children;
  \item[\rm(3)] for every vertex $w'$ of $T(w)$, all vertices of $S_T(w')$ are contained in $[w']_{\ci}$.
\end{itemize}
\end{lemma}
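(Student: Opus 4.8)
The plan is to establish the three parts in the order (2), (3), (1): part (2) is immediate from the construction in Definition~\ref{dtw}(b), part (3) is a short induction using transitivity of $\ci$, and part (1) then follows by combining (3) with the chain description in Lemma~\ref{lstr}.

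For (2), by Definition~\ref{dtw}(b) every child of a vertex $w'$ of $T(w)$ has the form $x\inv w'x$ for some $x\in A(w')$, and for a canonical word $A_1(w')$ and $A_2(w')$ are finite, so $A(w')\subseteq Y$ is finite; hence $w'$ has at most $|A(w')|$ children. For (3), fix a vertex $w'$ of $T(w)$ and induct on the distance in $S_T(w')$ of a vertex from $w'$. The root $w'$ lies in $[w']_{\ci}$ by reflexivity (Lemma~\ref{lem:equiv}). If $v$ is a vertex of $S_T(w')$ with $v\ci w'$ and $v''$ is a child of $v$, then $v''=x\inv vx$ for some $x\in A(v)$, so $v''\ci v$ by Lemma~\ref{lxxw}(2), and transitivity of $\ci$ gives $v''\ci w'$. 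Thus every vertex of $S_T(w')$ lies in $[w']_{\ci}$.

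For (1), applying (3) with $w'=w$ (so that $S_T(w)=T(w)$) shows that every vertex of $T(w)$ belongs to $[w]_{\ci}$. Conversely, suppose $w''\ci w$. By Lemma~\ref{lstr} there are $w_0,\dots,w_k\in\fr(X)$ and $x_1,\dots,x_k\in Y$ with $w_0=w$, $w_k=w''$, $w_0\ci\cdots\ci w_k$, and $w_i=x_i\inv w_{i-1}x_i$ for each $i\in\{1,\dots,k\}$. Arguing by induction on $i$, the base case $w_0=w$ is the root of $T(w)$; and if $w_{i-1}$ is a vertex of $T(w)$, then from $w_{i-1}\ci w_i=x_i\inv w_{i-1}x_i$ and Lemma~\ref{lxxw}(2) we get $x_i\in A(w_{i-1})$, so by Definition~\ref{dtw}(b) the word $w_i$ is either placed as a child of $w_{i-1}$ or already occurs in $T(w)$; in either case $w_i$ is a vertex of $T(w)$. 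Hence $w''=w_k$ is a vertex of $T(w)$.

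The only point needing care is the last one: the construction in Definition~\ref{dtw}(b) does \emph{not} always add $x\inv w'x$ as a new child when that word already appears elsewhere in $T(w)$, so the induction in (1) must be phrased in terms of ``$w_i$ is a vertex of $T(w)$'' rather than ``$w_i$ is a child of $w_{i-1}$.'' Everything else is a direct unwinding of the definitions, which is why the lemma follows immediately from Definition~\ref{dtw}, Lemmas~\ref{lxxw} and~\ref{lstr}, and finiteness of $A(u)$.
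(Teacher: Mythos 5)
Your proof is correct and uses exactly the ingredients the paper invokes (Definition~\ref{dtw}, Lemmas~\ref{lxxw} and~\ref{lstr}, and finiteness of $A(u)$); the paper simply asserts the lemma is immediate from these, and your argument is the straightforward unwinding it has in mind. Your closing remark about phrasing the induction in (1) as ``$w_i$ is a vertex of $T(w)$'' rather than ``$w_i$ is a child of $w_{i-1}$'' is the one genuinely delicate point, and you handle it correctly.
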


In view of Lemma~\ref{lmain}, we can extend the definition of the root of a canonical word
to an arbitrary $u\in Y^+$ by setting $R(u)=R(w)$, where $w$ is a canonical word that is equal to $u$ in $\fr(X)$.

\begin{lemma}
\label{lrw}
Let $w=u_0e_1u_1\ldots e_mu_m\in\fr(X)$ be canonical. Suppose $x\in A(w)$ with $x\in\{i(u_0),i(u_1)\}$
and $x\inv \in\{t(u_m),t(u_{m-1})\}$. Let $w'=x\inv wx$. Then $|R(w')|<|R(w)|$.
\end{lemma}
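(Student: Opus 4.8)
The plan is to track carefully what happens to the root when we conjugate $w = u_0 e_1 u_1 \ldots e_m u_m$ by the letter $x$, under the stated hypotheses. Since $x \in A(w)$, we have $x \in A_1(w)$ and $x\inv \in A_2(w)$, so $xx\inv w = w$ and $wxx\inv = w$ by Lemma~\ref{lxxw}(1); in particular $w' = x\inv wx$ is indeed conjugate to $w$. First I would split into cases according to whether $u_0 = 1$ or $u_0 \neq 1$ (this is exactly the dichotomy in the definition of $A_1(w)$), and symmetrically for $u_m$. The key observation is that conjugating by $x$ strips the leading letter off $u_0$ (or, if $u_0 = 1$, absorbs $x\inv $ into the first idempotent piece $e_1$, since $x$ is the first letter of some factor of $e_1$), and symmetrically strips the trailing letter off $u_m$ (or absorbs $x$ into $e_m$). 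In both scenarios the root $R(w) = u_0 u_1 \cdots u_m$ loses a letter from its front and a letter from its back, while the interior root pieces $u_1, \ldots, u_{m-1}$ are untouched — so $|R(w')| \leq |R(w)| - 2$ in the "generic" case, hence strictly smaller.

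The main work is to handle the boundary interactions and the reduction to canonical form carefully. When $u_0 \neq 1$, write $u_0 = x v$ with $i(u_0) = x$; then $x\inv w x = x\inv x v e_1 u_1 \ldots e_m u_m x = v e_1 u_1 \ldots e_m u_m x$, and the analogous operation applies at the right end using $x\inv \in A_2(w)$. When $u_0 = 1$, then $x$ is the first letter of a factor $xfx\inv $ of $e_1$, and $x\inv w x = x\inv (xfx\inv \, h) u_1 \ldots u_m x = f x\inv h u_1 \ldots u_m x$; here the leading idempotent data changes but $R$ is still shortened at the front because the new word's first root piece is what used to be $u_1$ (or a shortened version). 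One must then argue that the resulting word, possibly after the standard reduction/canonicalization procedure of Poliakova–Schein (Lemma~\ref{lmain}), still has a root of length strictly less than $|R(w)|$ — reduction can only shorten the root, never lengthen it, so the strict inequality is preserved. I would also need to check the degenerate sub-cases where $m = 0$ (no idempotent pieces, $w = u_0$ reduced), or where $u_0 = u_m = 1$ with $m = 1$, but these are easier and the same front-and-back stripping argument applies; when $m=0$ and $w = u_0$ is reduced of length $\geq 2$, conjugating by $x = i(u_0)$ with $x\inv = t(u_0)$ gives a reduced word of length $|w| - 2$, so $|R(w')| = |R(w)| - 2$.

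The step I expect to be the genuine obstacle is verifying that after the conjugation and re-canonicalization, no cancellation or recombination at the seams re-inflates the root. Concretely, after forming $v e_1 u_1 \ldots e_m u_m x$ (or the $u_0 = 1$ analogue), the junctions between the (now shortened) boundary pieces and the adjacent idempotent pieces must be re-examined: it is conceivable that some idempotent material gets "promoted" into the root, or that the canonical form merges pieces in a way that lengthens $R$. The resolution is that conditions (3) and (4) in Definition~\ref{dcwo}, together with Lemma~\ref{lmain}, tightly constrain how a canonical word can be rewritten — idempotent pieces and root pieces are determined (up to commuting idempotent subwords inside the $e_i$), so the root length is an invariant of the $\fr(X)$-element that can only decrease under the reduction of a non-canonical product. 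Thus once we exhibit $w'$ as equal in $\fr(X)$ to a word whose "naive root" (the concatenation of non-idempotent portions) has length $< |R(w)|$, Lemma~\ref{lmain} forces $|R(w')| < |R(w)|$.
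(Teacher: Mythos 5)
Your generic case ($u_0\ne 1$, $x=i(u_0)$, $x\inv=t(u_m)$) is essentially the paper's argument: the root is only affected at its two ends, so it loses the leading $x$ and trailing $x\inv$. But your second branch misreads the hypothesis, and this is a genuine gap. The condition $x\in\{i(u_0),i(u_1)\}$ means: either $u_0\ne 1$ and $x=i(u_0)$, or $u_0=1$ and $x=i(u_1)$. By Definition~\ref{dcwo}(4) the first letter of $u_1$ is \emph{different} from the first letters of the factors of $e_1$, so the situation you analyze when $u_0=1$ --- ``$x$ is the first letter of a factor $xfx\inv$ of $e_1$'' --- is precisely the case \emph{excluded} by the hypothesis of this lemma (it is the case handled separately by Lemma~\ref{lpath} and its dual). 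Worse, in that excluded case your claim that ``$R$ is still shortened at the front'' is false: since then $i(u_1)\ne x$, the letter $x\inv$ does not cancel into the root and $|R|$ can grow. The paper's own example illustrates this: $w=(ab\inv ba\inv cc\inv )c\inv (ab\inv ba\inv )$ has $|R(w)|=1$, while $a\inv wa=(b\inv b)a\inv (cc\inv )c\inv a(b\inv b)$ has root $a\inv c\inv a$ of length $3$. Meanwhile the case the lemma actually requires when $u_0=1$, namely $u_1=xs$ (and dually $u_{m-1}=tx\inv$ when $u_m=1$), is never treated in your proposal; it is handled in the paper by observing that, e.g., $w'=x\inv xse_1u_1\ldots e_{m-1}tx\inv e_mx$ has root $su_1\ldots u_{m-2}t$, again strictly shorter.

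A secondary error: the equality $x\inv xve_1u_1\ldots e_mu_mx=ve_1u_1\ldots e_mu_mx$ you assert does not hold in $\fr(X)$. The idempotent $x\inv x$ cannot be discarded (by Lemma~\ref{lxxw} that would require $x\inv$ to lie in $A_1$ of the right-hand word, which fails since $u_0=xv$ is reduced); it must be retained as a new idempotent piece, as the paper does when it writes $w'=x\inv xse_1u_1\ldots e_mtx\inv x$. This particular slip does not damage the root-length computation, since the root is the free-group reduction and idempotent pieces contribute nothing to it, but as written it is a false identity. Your closing remarks about canonicalization not inflating the root are correct in substance (the root is an invariant of the element by Lemma~\ref{lmain}), so once the case analysis is repaired along the lines above the proof goes through.
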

\begin{proof}
If $u_0=xs$ and $u_m=tx\inv $, then $w'=x\inv xse_1u_1\ldots e_mtx\inv x$, and so
$|R(w')|=|su_1\ldots u_{m-1}t|<|u_0u_1\ldots u_m|=|R(w)|$.
If $u_0=xs$ and $u_{m-1}=tx\inv $ (so $u_m$ must be $1$), then
\[
w'=x\inv xse_1u_1\ldots e_{m-1}tx\inv e_mx,
\]
and so
$|R(w')|=|su_1\ldots u_{m-2}t|<|u_0u_1\ldots u_{m-1}|=|R(w)|$. We obtain the same inequality
in the remaining two cases: $u_1=xs$, $u_m=tx\inv $ and $u_1=xs$, $u_{m-1}=tx\inv $.
\end{proof}

By a path in a rooted tree $T$, we mean a sequence $v_0,v_1,v_2,\ldots$ (finite or infinite) of vertices of $T$
such that $v_i$ is a child of $v_{i-1}$ for every $i\geq1$.

\begin{lemma}
\label{lpath}
Let $w=e_1u_1\ldots e_mu_m\in\fr(X)$ be canonical with $m\geq1$ and $xhx\inv $ a factor of~$e_1$.
Suppose $w_1=x\inv wx$ is a child of $w$ in $T(w)$. Then for every path $w_1,w_2,w_3,\ldots$ from $w_1$ in $T(w)$,
there are $x_1,x_2,x_3,\ldots$ in $Y$ such that for every $i\geq1$,
\begin{itemize}
  \item[\rm(1)] $w_i$ has canonical form $w_i=u^i_0\ldots$ with $i(u^i_0)=x_i\inv $
or $w_i=e_1^iu_1^i\ldots$ with $i(u_1^i)=x_i\inv $;
  \item[\rm(2)] if $i\geq2$, then $x_iw_ix_i\inv =w_{i-1}$ and if $w_i=e_1^iu_1^i\ldots$, then $|e_1^i|<|e_1^{i-1}|$;
  \item[\rm(3)] if $w_i=u^i_0\ldots$, then $w_i$ is a leaf in $T(w)$.
\end{itemize}
\end{lemma}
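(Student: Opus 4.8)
The plan is to prove all three statements simultaneously by induction on $i$, following the inductive construction of the path $w_1, w_2, w_3, \ldots$ and keeping track of how each conjugation $w_i = x_i^{-1} w_{i-1} x_i$ acts on the canonical form. The base case $i=1$ is essentially given: since $xhx^{-1}$ is a factor of $e_1$ and $w_1 = x^{-1}wx$, one computes $w_1 = x^{-1}xh x^{-1} e_2^{\mathstrut}\cdots u_m x = h x^{-1} e_2 u_2 \ldots e_m u_m x$ (reindexing and using that $x$ cancels with the leading $x^{-1}$). Putting this into canonical form, the leading idempotent piece $e_1^1$ is $h$ together with whatever other factors of $e_1$ survive, and the first root piece $u_1^1$ begins with $x^{-1}$ (the $x^{-1}$ that was to the right of $h$ inside $e_1$ is now exposed as the start of the root). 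So we set $x_1 := x$ and observe that $|e_1^1| < |e_1|$ if one tracks lengths, which sets up the strict-descent mechanism. One must be a little careful about whether $w_1$ has the form $u_0^1 \ldots$ (no leading idempotent piece) — this happens exactly when $h$ is empty, i.e. $e_1 = xx^{-1}$ — in which case (3) should apply and we claim $w_1$ is a leaf.

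For the inductive step, suppose the claim holds through index $i-1$, and let $w_i$ be a child of $w_{i-1}$ in $T(w)$, say $w_i = z^{-1} w_{i-1} z$ for some $z \in A(w_{i-1})$. By the inductive hypothesis, $w_{i-1}$ either has canonical form $u_0^{i-1}\ldots$ with $i(u_0^{i-1}) = x_{i-1}^{-1}$, or has form $e_1^{i-1} u_1^{i-1} \ldots$ with $i(u_1^{i-1}) = x_{i-1}^{-1}$. The key case analysis is on the value of $z$. The crucial point is that since $w_i$ is a \emph{new} vertex in $T(w)$ (it is placed only if it does not already occur), we cannot have $z = x_{i-1}$, because $x_{i-1} w_{i-1} x_{i-1}^{-1} = w_{i-2}$ already occurs in $T(w)$ (for $i \geq 3$; the case $i=2$ needs $x_1 w_1 x_1^{-1} = w$, which holds by construction of the base case). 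So $z \neq x_{i-1}$. In the first sub-case, where $w_{i-1} = e_1^{i-1} u_1^{i-1}\ldots$ with $i(u_1^{i-1}) = x_{i-1}^{-1}$: since $z \in A(w_{i-1}) = A(w)$ (by Lemma~\ref{lstr}, conjugates share their $A$-set along the chain — actually $A$ is not literally invariant, so one argues via $z \in A_1(w_{i-1})$), $z$ must either equal $i(u_1^{i-1}) = x_{i-1}^{-1}$, or be the first letter of a factor of $e_1^{i-1}$; the former is ruled out since $z \neq x_{i-1}$ would be needed but here the candidate is $z = x_{i-1}^{-1}$, which we must separately exclude (if $z = x_{i-1}^{-1}$ then $w_i = x_{i-1} w_{i-1} x_{i-1}^{-1}$; one checks this equals $w_{i-2}$ or is otherwise already present, contradicting newness). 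Hence $z$ is the first letter of some factor $z g z^{-1}$ of $e_1^{i-1}$, and exactly as in the base case $w_i = g z^{-1}(\text{rest})z$ has a strictly shorter leading idempotent piece, $|e_1^i| < |e_1^{i-1}|$, with $i(u_1^i) = z^{-1}$ or (if $g$ empty) the form $u_0^i\ldots$ with $i(u_0^i) = z^{-1}$. Set $x_i := z$, giving (1) and (2). In the second sub-case, $w_{i-1} = u_0^{i-1}\ldots$ with $i(u_0^{i-1}) = x_{i-1}^{-1}$ — but this is precisely the situation (3) says is a leaf, so there is no child $w_i$, and there is nothing to prove; this is where the strict-descent of $|e_1^i|$ pays off, since the lengths $|e_1^1| > |e_1^2| > \cdots$ cannot decrease forever, forcing the path to terminate in a vertex of the form $u_0^j\ldots$, which is then shown to be a leaf.

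The proof of (3) itself — that a vertex $w'$ of the form $u_0'\ldots$ with $i(u_0') = x'^{-1}$ arising this way is a leaf — goes as follows: any child would be $(y)^{-1}w'(y)$ for $y \in A(w')$, hence $y \in A_1(w')$, so $y = i(u_0') = x'^{-1}$ (since $u_0' \neq 1$, $A_1(w')$ is a singleton); but then $w'' = x' w' x'^{-1}$, and by the construction $x' w' x'^{-1} = w_{i-1}$ (the parent), which already occurs in $T(w)$, so no child is placed. I expect the main obstacle to be the bookkeeping in the inductive step: correctly ruling out $z = x_{i-1}$ and $z = x_{i-1}^{-1}$ using the "already occurs in $T(w)$" clause of Definition~\ref{dtw}, and correctly identifying the canonical form of $w_i$ after the cancellation — in particular handling the boundary cases where $R(w_{i-1})$ has only one piece, where $g$ is empty, or where other factors of $e_1^{i-1}$ interact with the reindexing. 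Lemma~\ref{lmain}(2) (uniqueness of canonical form up to the described moves) is the tool that makes these identifications rigorous, and Lemma~\ref{lrw} provides the template for the length-drop computation.
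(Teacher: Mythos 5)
Your proposal follows essentially the same route as the paper's proof: induction along the path, using the rule that already-present vertices are not re-added to exclude the backtracking conjugator, which forces each step to peel a factor $x_ih_ix_i^{-1}$ off the leading idempotent piece (so $|e_1^i|$ strictly decreases), and observing that a vertex of the form $u_0^i\ldots$ has $A_1$ a singleton and hence its only candidate child is its parent, making it a leaf. Two small slips to repair in the write-up: the conjugator that must be excluded is $z=x_{i-1}^{-1}$ (not $z=x_{i-1}$ --- you do self-correct, but the first justification is wrong), and after the cancellation the leading idempotent piece of $w_1$ is exactly $h$, since the remaining factors $f$ of $e_1$ land \emph{after} the exposed root letter $x^{-1}$ rather than inside $e_1^1$.
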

\begin{proof}
Let $w_1,w_2,w_3,\ldots$ be a path from $w_1$ in $T(w)$. We claim that (1)--(3) hold for $i=1$
with $x_1=x$. Since $xhx\inv $ is a factor of $e_1$, $e_1=xhx\inv f$, and so
\[
w_1=x\inv wx=x\inv xhx\inv fu_1\ldots=hx\inv fu_1\ldots.
\]
Suppose $h=1$. Then $w_1=u_0^1\ldots$ with $u_0^1=x\inv $ (if $f\ne1$) or $u_0^1=x\inv y\ldots$ where $y=i(u_1)$ (if $f=1$).
Moreover, $A(w_1)=\{x\inv \}$, so the only possible child of $w_1$ is $xw_1x\inv =w$. However,
since $w$ already occurs in $T(w)$, it would not have been placed as a child of $w_1$, which implies
that $w_1$ is a leaf.
Suppose $h\ne 1$. Then $w_1=e_1^1u_1^1\ldots$ with $e_1^1=h$ and $u_1^1=x\inv $ (if $f\ne1$) or
$u_1^1=x\inv y\ldots$ (if $f=1$). Since (2) holds vacuously when $i=1$, the claim has been proved.

Let $i\geq2$ and suppose there are $x_1,\ldots,x_{i-1}$ in $Y$
such that (1)--(3) are satisfied for every $j\in\{1,\ldots,i-1\}$.
If $w_{i-1}=u^{i-1}_0\ldots$, then $w_{i-1}$ is a leaf in $T(w)$ by the inductive hypothesis, so the path ends at $w_{i-1}$.

Suppose $w_{i-1}=e_1^{i-1}u_1^{i-1}\ldots$ with $x_{i-1}\inv =i(u_{i-1}^1)$. Since $w_i$ is a child of $w_{i-1}$,
there is some $x_i\in A(w_{i-1})$ such that $w_i=x_i\inv w_{i-1}x_i$. Then $x_iw_ix_i\inv =w_{i-1}$ by Lemma~\ref{lxxw}.
We claim that $x_i\ne x_{i-1}\inv $. Suppose $i=2$. Then
$x_1w_1x_1\inv =xw_1x\inv =w$, and so $x_2\ne x_1\inv $
since otherwise $w_2$ would be equal to $w$ and it would not have been placed as a child of $w_1$.
Suppose $i\geq2$. Then $x_{i-1}w_{i-1}x_{i-1}\inv =w_{i-2}$ by the inductive hypothesis, and so $x_i\ne x_{i-1}\inv $
since otherwise $w_i$ would be equal to $w_{i-2}$ and it would not have been placed as a child of $w_{i-1}$.
The claim has been proved.
Therefore, $x_i$ must be the first letter of some factor $x_ih_ix_i\inv $ of $e_1^{i-1}$, that is, $e_1^{i-1}=x_ih_ix_i\inv f_i$.
Then
\[
w_i=x_i\inv w_{i-1}x_i=x_i\inv x_ih_ix_i\inv f_iu_1^{i-1}\ldots=h_ix_i\inv f_iu_1^{i-1}\ldots.
\]
Suppose $h_i=1$. Then $w_i=u_0^i\ldots$ with $u_0^i=x_i\inv $ (if $f_i\ne1$) or $u_0^i=x_i\inv x_{i-1}\inv \ldots$ (if $f_i=1$).
Moreover, $A(w_i)=\{x_i\inv \}$, so the only possible child of $w_i$ is $x_iw_ix_i\inv =w_{i-1}$. However,
since $w_{i-1}$ already occurs in $T(w)$, it would not have been placed as a child of $w_i$, which implies
that $w_i$ is a leaf.
Suppose $h_i\ne 1$. Then $w_i=e_1^iu_1^i\ldots$ with $e_1^i=h_i$ and $u_1^i=x_i\inv $ (if $f_i\ne1$) or
$u_1^i=x_i\inv x_{i-1}\inv \ldots$ (if $f_i=1$).
Further, $|e_1^i|=|h_i|<|x_ih_ix_i\inv |\leq|e_1^{i-1}|$. We have proved that (1)--(3) hold for $i$, and the
result follows by induction.
\end{proof}

\begin{rem}
\label{rpath}
We have a dual of Lemma~\ref{lpath}.
Let $w=u_0e_1u_1\ldots e_m\in\fr(X)$ be canonical with $m\geq1$ and $xhx\inv $ a factor of $e_m$.
Suppose $w_1=x\inv wx$ is a child of $w$ in $T(w)$. Then the conclusion of Lemma~\ref{lpath} follows
with $u_0^i$, $e_1^i$, and $u_1^i$ replaced with $u_m^i$, $e_{m-1}^i$, and $u_{m-1}^i$, respectively.
\end{rem}

We can now prove the main results of this section.

\begin{theorem}
\label{tfr1}
For every $w\in\fr(X)$, the conjugacy class of $w$ is finite.
\end{theorem}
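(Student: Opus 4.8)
The plan is to show that the conjugacy tree $T(w)$ is finite; by Lemma~\ref{ldw}(1) its vertex set is exactly $[w]_{\ci}$, so this is equivalent to the statement. Since $T(w)$ is finitely branching (Lemma~\ref{ldw}(2)), by K\"onig's lemma (a finitely branching tree with no infinite path is finite) it suffices to prove that $T(w)$ has no infinite path. First I would dispose of the idempotent case: if some vertex $w'$ on a putative infinite path is a canonical idempotent, then the subtree $S_T(w')$, hence the tail of the path, lies in $[w']_{\ci}$ by Lemma~\ref{ldw}(3), which is finite by Proposition~\ref{pfin}; this is impossible, since an infinite path meets infinitely many distinct vertices. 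So I may assume that no vertex on the path is an idempotent, and write $w=u_0e_1u_1\ldots e_mu_m$ in canonical form with $w$ not an idempotent.

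Now suppose $w=w_0,w_1,w_2,\ldots$ is an infinite path, so that $w_{i+1}=x_{i+1}\inv w_ix_{i+1}$ with $x_{i+1}\in A(w_i)$. The heart of the argument is a dichotomy for each step, read off from Definition~\ref{daw}: at the front, $x_{i+1}$ is either the first letter of the leading root piece of $w_i$, or---when that root piece is empty---the first letter of a factor of the first idempotent piece of $w_i$; and symmetrically at the back for $x_{i+1}\inv$. A step in which the idempotent-piece alternative occurs at either end cannot happen along an infinite path. Indeed, if $x_{i+1}$ is the first letter of a factor of the first idempotent piece of $w_i$ (so $w_i$ has empty leading root piece, and hence at least one idempotent piece), then $w_i$ and $w_{i+1}$ play the roles of $w$ and $w_1$ in Lemma~\ref{lpath}; that lemma forces every continuation of the path from $w_{i+1}$ to ``descend'', the lengths $|e_1^j|$ strictly decreasing, until it reaches a vertex of the form $u_0^j\cdots$, which is a leaf---contradicting that the path is infinite. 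Remark~\ref{rpath} excludes the dual possibility at the back.

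Consequently, every step along the infinite path conjugates by an element $x_{i+1}$ that is a leading/trailing root letter at both ends, which is precisely the hypothesis of Lemma~\ref{lrw}. That lemma then gives $|R(w_{i+1})|<|R(w_i)|$ for every $i$, so $|R(w_0)|>|R(w_1)|>\cdots$ is an infinite strictly decreasing sequence of non-negative integers---a contradiction. Hence $T(w)$ has no infinite path, so $T(w)$ is finite and $[w]_{\ci}=V(T(w))$ is finite.

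I expect the main obstacle to be the dichotomy step: one must match the possible shapes of $x\in A(w_i)$ given by Definition~\ref{daw} against the exact hypotheses of Lemmas~\ref{lpath}, \ref{rpath}, and~\ref{lrw}, and realize that a ``descent into an idempotent piece'' step cannot recur, because each such descent is forced to terminate at a leaf. Some routine care is also needed for the boundary values $m=0$ and $m=1$, where $u_1$ or $u_{m-1}$ may coincide with $u_0$ or be absent; these are subsumed under the separately handled idempotent case together with the observation that a non-idempotent canonical word with empty leading root piece has $m\ge 1$.
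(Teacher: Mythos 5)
Your proof is correct and rests on exactly the same dichotomy as the paper's argument --- conjugation by a leading/trailing root letter strictly decreases $|R(\cdot)|$ by Lemma~\ref{lrw}, while conjugation by the first letter of a factor of an idempotent piece forces every continuation to terminate by Lemma~\ref{lpath} and Remark~\ref{rpath} --- combined with K\"onig's lemma; the paper merely packages this as an induction on $|R(w)|$ (so that Lemma~\ref{lpath} is only ever applied to children of the root), whereas you run a direct no-infinite-path argument. The one point you should make explicit is that you invoke Lemma~\ref{lpath} with an internal vertex $w_i$ of $T(w)$ playing the role of the root, which goes slightly beyond its stated hypothesis; this is harmless, since its proof uses only that the parent and grandparent of the current vertex already occur in $T(w)$, and so carries over verbatim.
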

\begin{proof}
We may assume that $w=u_0e_1u_1\ldots e_mu_m$ is canonical. We proceed by induction on $|R(w)|$.
If $|R(w)|=0$, then $w$ is an idempotent, and so $[w]_{\ci}$ is finite by Proposition~\ref{pfin}.
Let $|R(w)|\geq1$ and suppose $[w']_{\ci}$ is finite for every $w'\in\fr(X)$ with $|R(w')|<|R(w)|$.
Let $w'=x\inv wx$, where $x\in A(w)$, be a child of $w$ in a conjugacy tree $T(w)$.
We want to prove that the subtree $S_T(w')$
is finite. Suppose $x\in\{i(u_0),i(u_1)\}$ and $x\inv \in\{t(u_m),t(u_{m-1}\}$. Then $|R(w')|<|R(w)|$ by Lemma~\ref{lrw}.
Thus $[w']_{\ci}$ is finite by the inductive hypothesis, and so $S_T(w')$ is also finite. Suppose $x$ is the first
letter of a factor of $e_1$ or $x\inv $ is the last letter of a factor of $e_m$. Then each path in $S_T(w')$ is finite
by Lemma~\ref{lpath} and its dual (see Remark~\ref{rpath}), and so $S_T(w')$ is finite by K\"{o}nig's Lemma \cite[Thm.~3.2]{HaHi08}
(since each level of $T(w)$ is finite by Lemma~\ref{ldw}).

Since $w$ has finitely many children, it follows that $T(w)$ is finite, and so $[w]_{\ci}$ is also finite.
\end{proof}

\begin{defi}
\label{ddec}
We say that the conjugacy problem for $\fr(X)$ is \emph{decidable}
if there is an algorithm that given any pair $(u_1,u_2)$ of words in $Y^+$, returns YES if $u_1$ and $u_2$ are conjugate in $\fr(X)$
and NO otherwise.
\end{defi}

\begin{theorem}
\label{tfr2}
The conjugacy problem in $\fr(X)$ is decidable.
\end{theorem}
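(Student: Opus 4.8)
The plan is to convert the finiteness statement of Theorem~\ref{tfr1} into a halting algorithm by explicitly enumerating the conjugacy tree $T(w)$ of Definition~\ref{dtw}.

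First I would record that every operation needed is effective. Given $u\in Y^+$, the algorithm of Poliakova and Schein \cite{PoSc05} (Lemma~\ref{lmain}) produces a canonical word $w$ with $u=w$ in $\fr(X)$; moreover, equality of two canonical words in $\fr(X)$ is decidable, since by Lemma~\ref{lmain}(2) it reduces to syntactic equality of the root pieces together with equality of the idempotent pieces, and the latter, by Lemma~\ref{lmain}(1), is settled by a finite search over commutations of adjacent canonical-idempotent subwords. Thus the word problem for $\fr(X)$ is decidable. Next, for a canonical word $w=u_0e_1u_1\ldots e_mu_m$ the set $A(w)$ is read directly off the canonical form by Definition~\ref{daw}, hence is finite and computable, and for each $x\in A(w)$ the word $x\inv wx$ can again be put into canonical form effectively.

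The algorithm then proceeds as follows. On input $(u_1,u_2)$, compute canonical words $w_1,w_2$ equal in $\fr(X)$ to $u_1,u_2$. Build $T(w_1)$ as in Definition~\ref{dtw}: maintain a finite list $V$ of canonical words, initialized so that $V$ contains exactly $w_1$; repeatedly, for every $w'$ in $V$ and every $x\in A(w')$, compute the canonical form of $x\inv w'x$ and append it to $V$ unless it is already equal in $\fr(X)$ to some member of $V$; stop when a full pass adds nothing new. Each vertex has only finitely many children because $A(w')$ is finite, and by Lemma~\ref{ldw}(1) the words appearing in $V$ are exactly the elements of $[w_1]_{\ci}$, which is finite by Theorem~\ref{tfr1}; therefore the stopping condition is eventually met, $V=[w_1]_{\ci}$, and the construction halts. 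Finally return YES if $w_2$ equals (in $\fr(X)$) some member of $V$, and NO otherwise; this answer is correct since $u_1\ci u_2$ iff $w_1\ci w_2$ iff $w_2\in[w_1]_{\ci}$ iff $w_2\in V$, and the final test is decidable because the word problem is.

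The only non-routine ingredient, and the step I would flag as the essential one, is termination: a priori the conjugacy tree could be infinite, and it is exactly Theorem~\ref{tfr1} --- via the identification of its vertex set with $[w]_{\ci}$ in Lemma~\ref{ldw} --- that guarantees the list $V$ stabilizes so that the ``no new vertex'' halting test is reached. Everything else is bookkeeping built on the Poliakova--Schein normal form and the explicit description of $A(w)$ in Definition~\ref{daw}.
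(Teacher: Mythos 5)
Your proposal is correct and follows essentially the same route as the paper: enumerate $[u_1]_{\ci}$ by closing under the single-letter conjugations $x\inv w'x$ for $x\in A(w')$, using the Poliakova--Schein canonical form and the decidable word problem for the ``already listed'' test, with termination guaranteed by Theorem~\ref{tfr1} and correctness by Lemmas~\ref{lstr}, \ref{lxxw} and~\ref{ldw}. The only cosmetic difference is that you re-derive decidability of the word problem from Lemma~\ref{lmain}, whereas the paper simply cites it as known.
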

\begin{proof}
It is well known that the word problem in $\fr(X)$ is decidable \cite[\S6.2]{La98}; that is,
there is an algorithm that given any pair $(u_1,u_2)$ of words in $Y^+$, returns YES if $u_1$ and $u_2$ are equal in $\fr(X)$
and NO otherwise. Call this algorithm $\mathcal{A}_1$. Poliakova and Schein \cite{PoSc05}
have described an algorithm that given any $u\in Y^+$, returns a canonical word
$\rho(u)\in Y^+$ such that $u$ and $\rho(u)$ are equal in $\fr(X)$. Call this algorithm $\mathcal{A}_2$. We will describe an algorithm $\mathcal{A}$
that solves the conjugacy problem.

Let $u_1,u_2\in Y^+$. First, $\mathcal{A}$ constructs a list of all elements of the conjugacy class $[u_1]_{\ci}$ in the following way.
\begin{enumerate}
  \item Using $\mathcal{A}_2$, our algorithm $\mathcal{A}$ calculates $w_1=\rho(u_1)$. This is the first element of the list and it is not marked.
  \item Suppose $\mathcal{A}$ has constructed a list of canonical words $w_1,\ldots,w_k$ ($k\geq1$), of which
$w_1,\ldots,w_t$ have been marked ($0\leq t\leq k$).
  \item If $t=k$, then $\mathcal{A}$ stops the calculation of the list.
  \item If $t<k$, then:
 \begin{itemize}
   \item[(a)] $\mathcal{A}$ calculates $A(w_{t+1})=\{x_1,\ldots,x_p\}$ (see Definition~\ref{daw});
   \item[(b)] using $\mathcal{A}_2$, algorithm $\mathcal{A}$ constructs the following list of canonical words:
\[
v_1=\rho(x_1\inv w_{t+1}x_1),\,\,v_2=\rho(x_2\inv w_{t+1}x_2),\ldots,v_p=\rho(x_p\inv w_{t+1}x_p);
\]
  \item[(c)] $\mathcal{A}$ applies algorithm $\mathcal{A}_1$ to check if $v_1$ is already on the list.
If not, then $\mathcal{A}$ places $v_1$ as the next element on the list. It repeats this procedure for $v_2,\ldots,v_p$,
marks $w_{t+1}$, and goes to step (2).
\end{itemize}
\end{enumerate}
The part of algorithm $\mathcal{A}$ described in (1)--(4) stops (by Theorem~\ref{tfr1}) and it
constructs a list $w_1,\ldots,w_n$ of all pairwise distinct elements
of $[u_1]_{\ci}$ (by Lemma~\ref{lstr} and Lemma~\ref{lxxw}).

Next, algorithm $\mathcal{A}$ applies algorithm $\mathcal{A}_1$ to check if $u_2$ is on the list $w_1,\ldots,w_n$. If so, it returns
YES ($u_1$ and $u_2$ are conjugate), otherwise it returns NO ($u_1$ and $u_2$ are not conjugate).
\end{proof}

\section{Conjugacy in McAllister $P$-semigroups}
\label{sec:P}
In addition to symmetric inverse semigroups and free inverse semigroups, there are other important classes
of inverse semigroups in which the conjugacy relation is worth studying.

Let $S$ be an inverse semigroup with semilattice $E$ of idempotents. We say that $S$ is \emph{$E$-unitary}
if for all $a\in S$ and $e\in E$, if $ea\in E$ then $a\in E$ \cite[\S5.9]{Ho95}. We note that
the free semigroup $\fr(X)$ is $E$-unitary.

Every $E$-unitary semigroup is isomorphic to a $P$-semigroup constructed by McAlister \cite{McA74}.
Consider a triple $(G,\mathcal{X},\mathcal{Y})$, called a \emph{McAlister triple} \cite[p.~194]{Ho95},
where $G$ is a group, $\mathcal{X}$ is a set
with a partial order relation $\leq$, and $\mathcal{Y}$ is a nonempty subset of $\mathcal{X}$ such that:
\begin{itemize}
  \item [(1)] $\mathcal{Y}$ is a lower semilattice under $\leq$, that is, if $A,B\in\mathcal{Y}$, then the greatest lower bound $A\land B$
 exists and belongs to $\mathcal{Y}$;
  \item [(2)] $\mathcal{Y}$ is an order ideal of $\mathcal{X}$, that is, if $A\in\mathcal{Y}$, $B\in\mathcal{X}$, and $B\leq A$,
then $B\in\mathcal{Y}$;
  \item [(3)] $G$ acts on $\mathcal{X}$ by automorphisms, that is, there is a mapping $(g,X)\to gX$ from
$G\times\mathcal{X}$ to $\mathcal{X}$ such that for all $g,h\in G$ and $A,B\in\mathcal{X}$,
$g(hA)=(gh)A$ and $A\leq B\quad\iff\quad gA\leq gB$;
  \item [(4)] $G\mathcal{Y}=\mathcal{X}$, and $g\mathcal{Y}\cap\mathcal{Y}\ne\emptyset$ for all $g\in G$.
\end{itemize}
Consider a set $P(G,\mathcal{X},\mathcal{Y})=\{(A,g)\in(\mathcal{Y},G):g\inv A\in\mathcal{Y}\}$
and define a multiplication on $P(G,\mathcal{X},\mathcal{Y})$ by
\begin{equation}
\label{sothe1}
(A,g)(B,h)=(A\land gB,gh).
\end{equation}
The set $P(G,\mathcal{X},\mathcal{Y})$
with multiplication \eqref{sothe1} is a semigroup, called a \emph{McAlister $P$-semigroup}. Every McAlister $P$-semigroup
is an $E$-unitary inverse semigroup, and every $E$-unitary inverse semigroup is isomorphic
to some McAlister $P$-semigroup \cite[Thm.~5.9.2]{Ho95}.

The following theorem describes $i$-conjugacy in any McAllister $P$-semigroup.

\begin{theorem}
\label{tmca}
Let $S=P(G,\mathcal{X},\mathcal{Y})$ be a McAlister $P$-semigroup.
For $(A,g),(B,h)\in S$, the following are equivalent:
\begin{itemize}
\item [\rm(a)] $(A,g)\ci(B,h)$;
\item [\rm(b)] there exists $(C,k)\in S$ such that \rm{(i)} $A=kB=C\land gC\land A$ and \rm{(ii)} $g=khk\inv$.
\end{itemize}
\end{theorem}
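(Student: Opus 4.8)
The plan is to prove both implications by direct computation in $S=P(G,\mathcal{X},\mathcal{Y})$, using the product rule $(D,k)(E,\ell)=(D\land kE,k\ell)$, the inverse formula $(D,k)\inv=(k\inv D,k\inv)$, and the fact (axiom (3) of a McAlister triple) that each $k\in G$ acts on $\mathcal{X}$ by an order automorphism, so that $k\inv$ distributes over the meet. The symmetric two‑sided definition of $\ci$ will be replaced, via Proposition~\ref{prp:sapir}, by a one‑sided system of equations in which the conjugator is easier to control.

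For (b)$\implies$(a) I would set $t=(C,k)$ and simply compute. The product $t\inv(A,g)t$ has $G$-component $k\inv gk$, which equals $h$ by (ii), and $\mathcal{X}$-component $k\inv(C\land A\land gC)=k\inv A$ by the equality $A=C\land gC\land A$ in (i); since $k\inv A=k\inv(kB)=B$, this gives $t\inv(A,g)t=(B,h)$. Symmetrically, $t(B,h)t\inv$ has $G$-component $khk\inv=g$ and $\mathcal{X}$-component $C\land kB\land gC=C\land A\land gC=A$, again by (i), so $t(B,h)t\inv=(A,g)$. Hence $(A,g)\ci(B,h)$. This direction is pure bookkeeping; the only care needed is that $k\inv$ commutes with $\land$.

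For (a)$\implies$(b) I would apply Proposition~\ref{prp:sapir} in the form $\{(\mathrm{i}),(\mathrm{v}),(\mathrm{vi})\}$: there is $t\in S^1$ with $t\inv(A,g)t=(B,h)$, $(A,g)\cdot tt\inv=(A,g)$ and $tt\inv\cdot(A,g)=(A,g)$. When $t\in S$, write $t=(C,k)$; then $tt\inv=(C,1)$, so the two fixing equations read $(A\land gC,g)=(A,g)$ and $(C\land A,g)=(A,g)$, that is $A\le gC$ and $A\le C$, which immediately yield $C\land gC\land A=A$. Substituting this into $t\inv(A,g)t=\bigl(k\inv(C\land A\land gC),\,k\inv gk\bigr)=(k\inv A,\,k\inv gk)$ and equating with $(B,h)$ gives $k\inv A=B$ (hence $A=kB$, and therefore $kB=C\land gC\land A$) and $k\inv gk=h$ (hence $g=khk\inv$); this is exactly (b). The degenerate possibility $t=1$, in which $(A,g)=(B,h)$, is handled separately by exhibiting a suitable $(C,k)\in S$ directly.

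The \emph{main obstacle} is the forward direction, and more precisely the choice of which equivalent form of Proposition~\ref{prp:sapir} to use. It is exactly the absorption equations $(A,g)\cdot tt\inv=(A,g)=tt\inv\cdot(A,g)$ that force $A\le C$ and $A\le gC$ and hence let one recover the stated shape $A=C\land gC\land A$ of condition (i); the bare conjugation equation $t\inv(A,g)t=(B,h)$ alone would not pin $C$ down. Once that observation is in place the rest is a routine translation between the two coordinates of $S$.
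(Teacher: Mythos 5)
Your two computational directions are correct and, in substance, the same as the paper's. For (b)$\implies$(a) the paper performs exactly the expansion you describe. For (a)$\implies$(b) with a conjugator $(C,k)\in S$, the paper expands \emph{both} equations $(C,k)\inv(A,g)(C,k)=(B,h)$ and $(C,k)(B,h)(C,k)\inv=(A,g)$, obtaining $A=C\land kB\land gC$ and $kB=C\land A\land gC$, and deduces $A\le kB\le A$; your route through Proposition~\ref{prp:sapir}, reading $A\le C$ and $A\le gC$ off the absorption equations $(A,g)\cdot tt\inv=(A,g)=tt\inv\cdot(A,g)$, is a mild and perfectly valid variant that buys a slightly shorter derivation of $C\land gC\land A=A$.

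The genuine gap is the case you set aside. You assert that when the only conjugator is the adjoined identity $1$ (so $(A,g)=(B,h)$) one can handle matters "by exhibiting a suitable $(C,k)\in S$ directly," but you never exhibit it, and in general no such element exists. Indeed, condition (i) with $B=A$ and $h=g$ forces $A\le C$ and $A\le gC$, i.e.\ $C$ must be an element of $\mathcal{Y}$ lying above both $A$ and $g\inv A$; since $\mathcal{Y}$ is only a lower semilattice and an order ideal, such an upper bound need not exist. Concretely, in the $P$-representation of the free inverse semigroup $\fr(\{a,b\})$, the element $w=ab$ has $\mathcal{C}_{w,w}=\{1\}$: any $t\in S$ in $\mathcal{C}_{w,w}$ would give an idempotent $e=tt\inv$ with $e\ge ww\inv$ and $e\ge w\inv w$, and the Munn trees of $abb\inv a\inv$ and $b\inv a\inv ab$ meet only in the root, so no such idempotent exists in $\fr(\{a,b\})$. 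Hence (a) holds for the pair $(w,w)$ while (b) fails, and your promised separate argument cannot be supplied. To be fair, the paper's own proof opens with "there is $(C,k)\in S$ such that\dots" and thereby makes the same silent assumption that the conjugator can be chosen in $S$ rather than $S^1$; you at least isolated the problematic case, but closing it requires either restricting the conjugator to $S$ in the statement or adding the disjunct $(A,g)=(B,h)$ to condition (b), not the direct construction you propose.
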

\begin{proof}
Suppose $(A,g)\ci(B,h)$, that is, there is $(C,k)\in S$ such that
$(C,k)\inv (A,g)(C,k)=(B,h)$ and $(C,k)(B,h)(C,k)\inv =(A,g)$. Since $(C,k)\inv =(k\inv C,k\inv )$
\cite[p.~194]{Ho95}, by straightforward calculations we obtain
\begin{align*}
(B,h)&=(k\inv C\land k\inv  A\land(k\inv g)C,k\inv gk)\,, \\
(A,g)&=(C\land kB\land(khk\inv )C,khk\inv )\,.
\end{align*}
It follows that $g=khk\inv $ (so (ii) holds), $A=C\land kB\land gC$, and $kB=C\land A\land gC$.
Thus $A\leq kB$ and $kB\leq A$, so $A=kB$. Further,
\[
A=C\land kB\land gC=C\land(C\land A\land gC)\land gC=C\land A\land gC,
\]
so (i) also holds. Conversely, suppose (i) and (ii) hold. Then
\begin{align*}
(C,k)\inv (A,g)(C,k)&=(k\inv C,k\inv )(A\land gC,gk)=(k\inv C\land k\inv (A\land gC),k\inv gk) \\
&=(k\inv (C\land A\land gC),h)=(k\inv (kB),h)=(B,h)\,.
\end{align*}
Similarly,  $(C,k)(B,h)(C,k)\inv =(A,g)$, and so $(A,g)\!\ci\!(B,h)$.
\end{proof}

\section{Factorizable inverse monoids}
\label{sec:factorizable}
We now describe $i$-conjugacy in factorizable inverse monoids, with the coset
monoid of a group as a particular example.

First, recall that for $a,b$ in an inverse semigroup $S$, the natural partial order is defined by $a\leq b$
if there exists an idempotent $e$ such that $a = eb$. Equivalently,
\[
a\leq b\iff b\inv a = a\inv a\iff a\inv b = a\inv a\iff ab\inv = aa\inv\iff ba\inv = aa\inv\,. \tag{N}
\]
A subset $A\subseteq S$ is said to be \emph{upward closed} if for all $a\in A$, $x\in S$,
$a\leq x$ implies $x\in A$.

For $a,b\in S$ with $a\ci b$, we set
\[
\mathcal{C}_{a,b} = \{ g\in S^1\mid g\inv ag= b,\ gbg\inv = a\}\,.
\]

\begin{theorem}
\label{thm:upward}
  Let $S$ be an inverse semigroup. For each $a,b\in S$ with $a\ci b$, $\mathcal{C}_{a,b}$ is upward closed
  in $S^1$.
\end{theorem}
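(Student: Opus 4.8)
The approach is to trade the defining conditions ``$g\inv ag=b$ and $gbg\inv=a$'' for the more flexible equivalent set given by Proposition~\ref{prp:sapir}(c): for $g\in S^1$ one has $g\in\mathcal{C}_{a,b}$ if and only if conditions (iii), (v), (viii) hold, i.e. $ag=gb$, $a\cdot gg\inv=a$, and $g\inv g\cdot b=b$. The virtue of this reformulation is that $g$ now enters only through the idempotents $gg\inv$, $g\inv g$ and the single ``shift'' equation $ag=gb$, and idempotents behave well under the natural partial order. So I would fix $g\in\mathcal{C}_{a,b}$ and $h\in S^1$ with $g\leq h$, and check that (iii), (v), (viii) again hold with $h$ in place of $g$; by Proposition~\ref{prp:sapir} this yields $h\in\mathcal{C}_{a,b}$, which is exactly upward closure.

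First I would record the order-theoretic facts to be used. From $g\leq h$ in $S^1$ one has $g=gg\inv h=hg\inv g$, and, passing to idempotents, $gg\inv\leq hh\inv$ and $g\inv g\leq h\inv h$; since idempotents commute, this gives $gg\inv\cdot hh\inv=gg\inv$ and $h\inv h\cdot g\inv g=g\inv g$. (Working in $S^1$ is harmless: if $h=1$ all of these are trivial.) With these in hand, conditions (v) and (viii) for $h$ are immediate: $a\cdot hh\inv=(a\cdot gg\inv)hh\inv=a(gg\inv\cdot hh\inv)=a\cdot gg\inv=a$, and symmetrically $h\inv h\cdot b=h\inv h\cdot(g\inv g\cdot b)=(h\inv h\cdot g\inv g)b=g\inv g\cdot b=b$.

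The only computation that is slightly more than bookkeeping is condition (iii) for $h$: using $a\cdot gg\inv=a$, then $gg\inv h=g$, then $ag=gb$, then $hg\inv g=g$, then $g\inv g\cdot b=b$, one gets $ah=(a\cdot gg\inv)h=a(gg\inv h)=ag=gb=(hg\inv g)b=h(g\inv g\cdot b)=hb$. That closes the verification, and Proposition~\ref{prp:sapir} then gives $h\in\mathcal{C}_{a,b}$.

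I do not expect a genuine obstacle here: the whole argument reduces to the observation that enlarging $g$ to $h\geq g$ merely replaces the idempotents $gg\inv$ and $g\inv g$ by larger ones that still absorb on the relevant side. The one thing worth double-checking is that the absorption occurs on the side the conditions require, but this is forced by $g\leq h\implies gg\inv\leq hh\inv$ (acting on the left of $a$) and $g\leq h\implies g\inv g\leq h\inv h$ (acting on the right of $b$), so there is really nothing that can go wrong.
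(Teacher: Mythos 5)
Your proof is correct, and it is essentially the paper's argument: both rest on Proposition~\ref{prp:sapir} together with the natural-partial-order identities from (N), the only difference being that you verify the equivalent condition set (c) for $h$, while the paper computes $h\inv ah$ and $hbh\inv$ directly via $h\inv g = g\inv g$ and $gh\inv = gg\inv$. No gaps; the $h=1$ case is indeed covered automatically.
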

\begin{proof}
  Let $g\in \mathcal{C}_{a,b}$ and suppose $g\leq h$ for $h\in S^1$.
  We use Proposition \ref{prp:sapir} and (N) to obtain: $h\inv ah = h\inv\cdot gbg\inv\cdot h =
  g\inv gbg\inv g = b$ and $hbh\inv = h\cdot g\inv ag\cdot h\inv = gg\inv a gg\inv = a$.
  Thus $h\in \mathcal{C}_{a,b}$.
\end{proof}

An inverse monoid $S$ is \emph{factorizable} if $S = E(S)\,U(S)$.
In other words, each element $a\in S$ can be written in the form $a = eg$ for some idempotent $e\in E(S)$
and some unit $g\in U(S)$.

For example, let $G$ be a group and let $\mathcal{CM}(G) = \{ Ha\mid H\leq G,\ a\in G\}$ be the
\emph{coset monoid} of $G$, where the multiplication on right cosets is defined by
$Ha\ast Kb = (H\lor aKa\inv)ab$, where $H\lor aKa\inv$ is the smallest subgroup of $G$
that contains the subgroups $H$ and $aKa\inv$.
This is a factorizable inverse monoid \cite{Schein66}, and every inverse semigroup
embeds in the coset monoid of some group \cite{McAlister80}. In this case,
$E(S)$ is the set of all subgroups of $G$ and $U(S)$ is the set of all singletons from $G$,
that is, cosets of the trivial subgroup \cite{East06}.

\begin{corollary}
\label{cor:factorizable}
  Let $S$ be a factorizable inverse monoid. Then for all $a,b\in S$, $a\ci b$ if and only if
  $a\cu b$.
\end{corollary}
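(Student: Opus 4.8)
The plan is to prove only the nontrivial inclusion $\ci\,\subseteq\,\cu$, since the reverse inclusion $\cu\,\subseteq\,\ci$ holds in every inverse monoid by \eqref{eq:unit_i}. So suppose $a\ci b$ and fix $g\in S^1$ with $g\inv ag=b$ and $gbg\inv=a$; in the notation introduced before Theorem~\ref{thm:upward}, this says $g\in\mathcal{C}_{a,b}$. Because $S$ is a monoid we have $S^1=S$, so $g$ is an honest element of $S$, and factorizability ($S=E(S)\,U(S)$) lets us write $g=eh$ with $e\in E(S)$ and $h\in U(S)$.

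The key observation is that $g=eh\leq h$ in the natural partial order on $S$: the expression $g=eh$ exhibits $g$ as an idempotent times $h$, which is precisely the definition of $g\leq h$ recalled at the start of Section~\ref{sec:factorizable}. Now I would invoke Theorem~\ref{thm:upward}: the set $\mathcal{C}_{a,b}$ is upward closed in $S^1$, and $g\in\mathcal{C}_{a,b}$ with $g\leq h$, so $h\in\mathcal{C}_{a,b}$. That is, $h\inv ah=b$ and $hbh\inv=a$ with $h\in U(S)$, which is exactly the statement $a\cu b$. This completes the argument.

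There is no real obstacle in this proof: the entire content is the observation that Theorem~\ref{thm:upward} converts a factorization $g=eh$ into a conjugating \emph{unit} $h$, simply because $eh$ sits above $h$ in the natural order. The only point that needs a moment's care is the inequality $eh\leq h$, and that is immediate from condition (N). Hence the corollary falls out in a few lines once Theorem~\ref{thm:upward} and the factorizability hypothesis are both available.
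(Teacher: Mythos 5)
Your proof is correct and follows exactly the paper's own argument: factor the conjugating element as an idempotent times a unit, observe that it therefore lies below that unit in the natural partial order, and apply Theorem~\ref{thm:upward} to conclude the unit itself conjugates $a$ to $b$. No issues.
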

\begin{proof}
  We already noted in \eqref{eq:unit_i} that $\cu\ \subseteq\ \ci$ in any inverse monoid.
  For the other inclusion, suppose $a\ci b$ and let $h\in \mathcal{C}_{a,b}$ be given.
  Then there exist $e\in E(S)$, $g\in U(S)$ such that $h = eg$.
  But then $h\leq g$. By Theorem \ref{thm:upward}, $g\in \mathcal{C}_{a,b}$.
\end{proof}

\begin{rem}
  If $X$ is a finite set, then the symmetric inverse monoid $\mi(X)$ is factorizable, so
  Corollary \ref{cor:factorizable} gives another proof of the equivalence of
  parts (i) and (iii) of Proposition \ref{pper}.
\end{rem}

In particular, we have the following.

\begin{corollary}
\label{cor:coset}
  Let $G$ be a group and let $\mathcal{CM}(G)$ be the coset monoid of $G$. For $Ha,Kb\in \mathcal{CM}(G)$,
  if $Ha\ci Kb$ if and only if there exists $g\in G$ such that $g\inv Hag = Kb$ and $gKbg\inv = Ha$.
\end{corollary}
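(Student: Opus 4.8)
The plan is to obtain this as a direct consequence of Corollary~\ref{cor:factorizable}. Since $\mathcal{CM}(G)$ is a factorizable inverse monoid \cite{Schein66}, that corollary gives $Ha\ci Kb$ if and only if $Ha\cu Kb$, so everything reduces to spelling out what unit conjugacy means inside $\mathcal{CM}(G)$. Recall (from the discussion following Corollary~\ref{cor:factorizable}, and \cite{East06}) that $U(\mathcal{CM}(G))$ consists precisely of the singletons $\{g\}$ with $g\in G$ --- that is, the cosets of the trivial subgroup --- and that $g\mapsto\{g\}$ is a group isomorphism $G\to U(\mathcal{CM}(G))$ with $\{g\}\inv=\{g\inv\}$.

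The one computation to carry out is the conjugation of an arbitrary coset by such a unit. Using the product rule $Pu\ast Qv=(P\lor uQu\inv)uv$ and the fact that $\{1\}\lor P=P$ for every subgroup $P$, one gets
\[
\{g\}\inv\ast Ha=(g\inv Hg)(g\inv a)\,,\qquad\mbox{hence}\qquad \{g\}\inv\ast Ha\ast\{g\}=(g\inv Hg)(g\inv ag)\,,
\]
and the right-hand side, viewed as a subset of $G$, is exactly $g\inv(Ha)g$ (a right coset of $g\inv Hg$). Symmetrically $\{g\}\ast Kb\ast\{g\}\inv=g(Kb)g\inv$. Thus, for a fixed $g\in G$, the coset-monoid equations $\{g\}\inv\ast Ha\ast\{g\}=Kb$ and $\{g\}\ast Kb\ast\{g\}\inv=Ha$ are literally the conditions $g\inv Hag=Kb$ and $gKbg\inv=Ha$ in the statement.

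Putting the pieces together: $Ha\cu Kb$ asserts the existence of a unit of $\mathcal{CM}(G)$, i.e.\ of some $\{g\}$ with $g\in G$, satisfying those two equations, which by the previous paragraph is precisely the existence of $g\in G$ with $g\inv Hag=Kb$ and $gKbg\inv=Ha$; combined with $\ci\,=\,\cu$ on $\mathcal{CM}(G)$ this is the claimed equivalence. I do not anticipate any serious obstacle; the only point requiring care is bookkeeping in the multiplication rule --- in particular that the subgroup component of $\{g\}\inv\ast Ha\ast\{g\}$ is $g\inv Hg$ rather than $H$, so that ``$g\inv Hag$'' in the statement must be read as the coset $(g\inv Hg)(g\inv ag)$, equivalently as the set $g\inv(Ha)g$.
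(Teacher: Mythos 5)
Your proposal is correct and matches the paper's intended argument: the paper presents this corollary as an immediate consequence of Corollary~\ref{cor:factorizable} together with the identification of $U(\mathcal{CM}(G))$ with the singleton cosets, which is exactly what you do. Your explicit computation that $\{g\}\inv\ast Ha\ast\{g\}=(g\inv Hg)(g\inv ag)=g\inv(Ha)g$ correctly fills in the bookkeeping the paper leaves implicit.
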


\section{Green's relations and Clifford semigroups}
\label{sec:clifford}
Let $S$ be a semigroup.
For $a,b\in S$, we say that $a\greenL b$ if $S^1a=S^1b$, $a\greenR b$ if $aS^1=bS^1$,
and $a\greenJ b$ if $S^1aS^1=S^1bS^1$. We set $\greenH = \greenL\cap \greenR$. We also
define $\greenD$ to be the join of $\greenL$ and $\greenR$, that is, the smallest equivalence relation
on $S$ containing both $\greenL$ and $\greenR$; it turns out that
$\greenD=\greenL\circ\greenR=\greenR\circ\greenL$ \cite[p.~46]{Ho95}. These equivalences,
called \emph{Green's relations}, play an important role in semigroup theory \cite[{\S}2.1]{Ho95}.

The conjugacy $\ci$ is always included in $\greenD$ (and so in $\greenJ$).

\begin{prop}
\label{pcid}
Let $S$ be an inverse semigroup. Then $\ci\,\,\subseteq\gd$.
\end{prop}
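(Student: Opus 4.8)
The plan is to show $a \gdd b$ whenever $a \ci b$ by using the factorization $\gd = \gr\circ\gl$ (recalled in the text just above) and exhibiting an explicit element $c$ with $a\grr c$ and $c\gll b$. If $g\in S^1$ witnesses $a\ci b$, the natural choice is $c = ag$, which by condition~(iii) of Proposition~\ref{prp:sapir} also equals $gb$; this two-sided description of $c$ is exactly what makes one Green relation visible from each side.

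First I would invoke Proposition~\ref{prp:sapir} to obtain the auxiliary identities $ag = gb$ (iii), $a\cdot gg\inv = a$ (v), and $g\inv g\cdot b = b$ (viii). From $c = ag$ we get $cS^1\subseteq aS^1$, and from $a = a\cdot gg\inv = cg\inv$ we get $aS^1\subseteq cS^1$, so $aS^1 = cS^1$, that is, $a\grr c$. Dually, from $c = gb$ we get $S^1c\subseteq S^1b$, and from $b = g\inv g\cdot b = g\inv c$ we get $S^1b\subseteq S^1c$, so $S^1b = S^1c$, that is, $c\gll b$. Since $\gd = \gr\circ\gl$, the chain $a\grr c\gll b$ gives $a\gdd b$; and because $\gd\subseteq\gj$ in every semigroup, this also yields $a\gjj b$.

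I do not anticipate a real obstacle: once the intermediate element $c = ag = gb$ is identified, the verification is routine, the only points requiring attention being the bookkeeping with $S^1$ and the fact (stated in the excerpt) that $\gd$ is the composite $\gr\circ\gl$. If one wishes to bypass Proposition~\ref{prp:sapir}, the three identities used follow directly from $g\inv ag = b$ and $gbg\inv = a$ together with $g\inv gg\inv = g\inv$.
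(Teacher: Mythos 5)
Your proof is correct and follows essentially the same route as the paper: both use the intermediate element $ag=gb$, establish $a\grr ag$ from $ag\cdot g\inv=a$ and $ag\gll b$ from $g\inv\cdot ag=b$, and conclude via $\gd=\gr\circ\gl$. The only cosmetic difference is that you spell out the two-sided ideal inclusions explicitly where the paper leaves them implicit.
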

\begin{proof}
Let $a,b\in S$ with $a\ci b$. By Proposition~\ref{prp:sapir}, there exists $g\in S^1$ such that
$g\inv\cdot ag = b$, $ag = gb$, and $a\cdot gg\inv = a$.
Since $ag\cdot g\inv = a$, we have $a\greenR ag$. Since $g\inv\cdot ag = b$ and $gb = ag$, we have
$ag\greenL b$. Hence $a\,(\greenR\circ\greenL)\,b$, that is, $a\greenD b$.
\end{proof}

For each element $a$ in an inverse semigroup $S$, the unique
idempotent in the $\greenL$-class of $a$ is $a\inv a$, and the unique idempotent in
the $\greenR$-class of $a$ is $aa\inv$. These idempotents are conjugate.

\begin{lemma}
\label{lem:xx'x'x}
  Let $S$ be an inverse semigroup. For all $x\in S$, $xx\inv \ci x\inv x$.
\end{lemma}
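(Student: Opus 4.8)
The plan is to exhibit a conjugating element explicitly rather than to invoke any structural result; the obvious candidate is $g = x$ itself, viewed as an element of $S^1$ (indeed of $S$). Writing $a = xx\inv$ and $b = x\inv x$, it suffices by Definition~\ref{def:conjugation} to verify the two equations $g\inv ag = b$ and $gbg\inv = a$.

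For the first, I would compute $x\inv\cdot xx\inv\cdot x = (x\inv x)(x\inv x)$, which equals $x\inv x = b$ because $x\inv x$ is idempotent (equivalently, $x\inv(xx\inv x) = x\inv x$). Dually, $x\cdot x\inv x\cdot x\inv = (xx\inv)(xx\inv) = xx\inv = a$, using that $xx\inv$ is idempotent. Hence $g = x$ witnesses $xx\inv \ci x\inv x$, and the conjugacy is realized inside $S$ itself, with no appeal to an adjoined identity.

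There is no genuine obstacle here: the statement is a direct consequence of the defining identities $xx\inv x = x$ and $x\inv xx\inv = x\inv$ together with the idempotency of $xx\inv$ and $x\inv x$. If one prefers, the same element $g = x$ can be fed into condition~(b) of Proposition~\ref{prp:sapir} — one checks (i) exactly as above, while (v) and (vi) both reduce to $xx\inv\cdot xx\inv = xx\inv$ — but the straightforward verification of (i) and (ii) is the most economical route.
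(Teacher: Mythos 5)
Your proof is correct and is essentially identical to the paper's: both take $g = x$ as the conjugating element and verify $x\inv\cdot xx\inv\cdot x = x\inv x$ and $x\cdot x\inv x\cdot x\inv = xx\inv$ directly from the defining identities. Nothing to add.
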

\begin{proof}
  This is immediate from $x\inv\cdot xx\inv\cdot x = x\inv x$ and
  $x\cdot x\inv x\cdot x\inv = xx\inv$.
\end{proof}

In addition, $i$-conjugacy of elements implies $i$-conjugacy of their corresponding
$\greenL$-related and $\greenR$-related idempotents.

\begin{lemma}\label{lem:same_g}
  Let $S$ be an inverse semigroup and let $a,b\in S$ satisfy $a\ci b$. Then
  $a\inv a\ci b\inv b$ and $aa\inv \ci bb\inv$. More precisely, if $g\in \mathcal{C}_{a,b}$,
  then $g\in \mathcal{C}_{a\inv a,b\inv b}$ and $g\in \mathcal{C}_{aa\inv,bb\inv}$.
\end{lemma}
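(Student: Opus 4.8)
The plan is to prove the ``more precisely'' statement directly, since the first two conjugacies follow immediately from it. So suppose $g\in\mathcal{C}_{a,b}$, i.e.\ $g\inv ag=b$ and $gbg\inv=a$. By Proposition~\ref{prp:sapir} we may freely use all of (i)--(viii); in particular $ag=gb$, $bg\inv=g\inv a$, $a\cdot gg\inv=a=gg\inv\cdot a$, and $b\cdot g\inv g=b=g\inv g\cdot b$. I want to verify the four equations
\[
g\inv(a\inv a)g = b\inv b,\quad g(b\inv b)g\inv = a\inv a,\quad g\inv(aa\inv)g = bb\inv,\quad g(bb\inv)g\inv = aa\inv.
\]

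The key computational step is to rewrite each conjugated idempotent using $(xy)\inv=y\inv x\inv$ together with the conjugacy relations. For instance, $g\inv(a\inv a)g = (g\inv a)(ag) = (ag)\inv(ag)$ — wait, more carefully: $g\inv a^{-1} a g = (a g)^{-1}(ag)$ is not literally $g\inv a\inv ag$ unless we insert $= g\inv a\inv \cdot ag$; and then using $ag=gb$ on the right factor and $g\inv a\inv = (ag)\inv = (gb)\inv = b\inv g\inv$ on the left factor gives $b\inv g\inv\cdot gb = b\inv(g\inv g)b = b\inv b$, where the last equality uses (viii) in the form $g\inv g\cdot b = b$ together with its left-right dual. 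The other three equations are obtained the same way: $g\inv(aa\inv)g = g\inv a\cdot a\inv g = (a^{-1}g)^{-1}\cdot a\inv g$, and since by (iv) $bg\inv = g\inv a$, i.e.\ $a\inv g = (g\inv a)\inv$... it is cleaner to go the other direction: $g\inv a a\inv g$, substitute $a\inv g$ using the relation $bg\inv = g\inv a$ read as $a\inv g = g b\inv$ (apply inverses to (iv)) and $g\inv a = bg\inv$, yielding $bg\inv\cdot gb\inv = b(g\inv g)b\inv = bb\inv$, using $g\inv g\cdot b = b$ and hence $g\inv g\cdot bb\inv = bb\inv$. Symmetrically, conjugating by $g$ instead of $g\inv$ and swapping the roles of $a,b$ (exactly the device used in the proof of Proposition~\ref{prp:sapir}) gives $g(b\inv b)g\inv = a\inv a$ and $g(bb\inv)g\inv = aa\inv$.

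Thus $g\in\mathcal{C}_{a\inv a,\,b\inv b}$ and $g\in\mathcal{C}_{aa\inv,\,bb\inv}$, which in particular yields $a\inv a\ci b\inv b$ and $aa\inv\ci bb\inv$. I do not anticipate a genuine obstacle here: the whole content is the bookkeeping of which of the eight equivalent conditions from Proposition~\ref{prp:sapir} to substitute where, plus the elementary identities $(xy)\inv=y\inv x\inv$ and $(x\inv)\inv=x$ valid in any inverse semigroup. The one place to be careful is that $g$ lies in $S^1$, not necessarily in $S$, so one should note that all the manipulations are valid in $S^1$ (and the statement is trivial when $g=1$); and that when $g=1$ we simply have $a=b$ and there is nothing to prove. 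I would present the four verifications compactly in a single short displayed computation for each of $a\inv a$ and $aa\inv$, then invoke the $a\leftrightarrow b$, $g\leftrightarrow g\inv$ symmetry for the remaining two, mirroring the style already used in the paper.
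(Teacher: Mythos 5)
Your proof is correct and is essentially the paper's argument run in the opposite direction: the paper expands $b\inv b=(g\inv ag)\inv g\inv ag$ and collapses it to $g\inv a\inv ag$ using conditions (v)--(vi) of Proposition~\ref{prp:sapir}, while you start from $g\inv(a\inv a)g$ and substitute $ag=gb$, $g\inv a=bg\inv$ to reach $b\inv b$ and $bb\inv$, then invoke the same $a\leftrightarrow b$, $g\leftrightarrow g\inv$ symmetry for the remaining two identities. The only quibble is a mis-attribution in the second computation: the step $b(g\inv g)b\inv=bb\inv$ follows from condition (vii), $b\cdot g\inv g=b$, rather than from (viii) as you cite, but since all eight conditions hold simultaneously this does not affect correctness.
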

\begin{proof}
 Let $g\in \mathcal{C}_{a,b}$. Then
 \begin{align*}
   b\inv b &= (g\inv ag)\inv g\inv ag = g\inv a\inv \underbrace{gg\inv a}g = g\inv a\inv ag\qquad \text{and} \\
   b b\inv &= g\inv ag (g\inv ag)\inv = g\inv \underbrace{agg\inv} a\inv g = g\inv aa\inv g\,,
 \end{align*}
 using Proposition \ref{prp:sapir} in both calculations. The equalities
 $g b\inv bg\inv = a\inv a$ and $g bb\inv g\inv = aa\inv$ follow similarly.
  \end{proof}

Let $S$ be a semigroup. For $a\in S$, denote by $H_a$ the $\greenH$-class containing $a$. Any $\greenH$-class of $S$
containing an idempotent is a maximal subgroup of $S$.
An element $a\in S$ is \emph{completely regular} (or a \emph{group element}) if
its $\greenH$-class $H_a$ is a group. If $S$ is an inverse semigroup, the unique inverse $a\inv$ of
a completely regular element $a$ is also the inverse of $a$ in $H_a$, and so in particular,
$aa\inv = a\inv a$. Conversely, if $aa\inv = a\inv a$, then $a\greenH aa\inv$, so $H_a$ is a group.

\begin{lemma}\label{lem:CR}
 Let $S$ be an inverse semigroup and let $a,b\in S$ satisfy $a\ci b$. The following are
 equivalent:
 \begin{itemize}
   \item[\rm{(a)}] $a$ is completely regular;
   \item[\rm{(b)}] $b$ is completely regular;
   \item[\rm{(c)}] there exists $h\in \mathcal{C}_{a,b}$ such that $a\greenR h\greenL b$.
 \end{itemize}
\end{lemma}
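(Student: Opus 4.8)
The plan is to establish the equivalences via the two implications (a)$\implies$(c)$\implies$(a) together with the separate equivalence (a)$\iff$(b), relying throughout on Proposition~\ref{prp:sapir}, Lemma~\ref{lem:same_g}, and the standard fact that an element $x$ of an inverse semigroup is completely regular exactly when $xx\inv = x\inv x$.

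First I would handle (a)$\iff$(b). Given $a\ci b$, fix $g\in\mathcal{C}_{a,b}$. By Lemma~\ref{lem:same_g} we have $g\inv(aa\inv)g = bb\inv$ and $g\inv(a\inv a)g = b\inv b$ (and the mirror equalities). So $aa\inv = a\inv a$ forces $bb\inv = b\inv b$, and conversely; hence (a) and (b) are equivalent. Next, for (a)$\implies$(c) I would take $g\in\mathcal{C}_{a,b}$ and propose the witness $h = ag$ (equivalently $h = gb$, since $ag = gb$ by Proposition~\ref{prp:sapir}). Writing $e = aa\inv = a\inv a$ (so $ae = ea = a$), one checks: $hh\inv = a(gg\inv)a\inv = aa\inv$ by Proposition~\ref{prp:sapir}(v), so $a\greenR h$; and $h\inv h = g\inv(a\inv a)g = b\inv b$ by Lemma~\ref{lem:same_g}, so $h\greenL b$. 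Finally $h\in\mathcal{C}_{a,b}$: using $(a\inv a)a = (aa\inv)a = a$, compute $h\inv ah = g\inv(a\inv a)ag = g\inv ag = b$ and $hbh\inv = a(gbg\inv)a\inv = a\cdot a\cdot a\inv = a$, invoking conditions (i), (ii) of Proposition~\ref{prp:sapir} for $g$. This is precisely where complete regularity of $a$ is needed.

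For (c)$\implies$(a) I would argue directly from the hypotheses $h\in\mathcal{C}_{a,b}$, $aa\inv = hh\inv$, $h\inv h = b\inv b$. From $hbh\inv = a$ we get $a\inv = hb\inv h\inv$, so $a\inv a = hb\inv(h\inv h)bh\inv$; applying Proposition~\ref{prp:sapir}(viii) (i.e. $(h\inv h)b = b$) gives $a\inv a = h(b\inv b)h\inv = h(h\inv h)h\inv = hh\inv$ since $hh\inv$ is idempotent. Combined with $aa\inv = hh\inv$ this yields $aa\inv = a\inv a$, so $a$ is completely regular, and (c)$\implies$(a) is done; together with (a)$\iff$(b) all three statements are equivalent. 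The computations are all short, so there is no real obstacle here; the only nonroutine point is spotting the correct witness $h = ag$ in (a)$\implies$(c) and seeing that the verification $h\in\mathcal{C}_{a,b}$ is exactly what consumes the hypothesis $aa\inv = a\inv a$.
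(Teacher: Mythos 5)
Your proof is correct and follows essentially the same route as the paper: (a)$\iff$(b) via Lemma~\ref{lem:same_g}, the same witness $h=ag=gb$ for (a)$\implies$(c), and for the converse the same appeal to uniqueness of idempotents in $\greenR$- and $\greenL$-classes followed by a short computation (you close the cycle by showing (c)$\implies$(a) where the paper shows (c)$\implies$(b), a symmetric variant). No gaps.
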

\begin{proof}
(a)$\iff$(b) follows from Lemma \ref{lem:same_g}.

Assume (a), (b), and fix $g\in \mathcal{C}_{a,b}$. Set $h = gb = ag$. Then
\begin{equation}\label{eq:CR1}
hh\inv = \underbrace{agg\inv} a\inv = aa\inv\quad\text{and}\quad
h\inv h = b\inv \underbrace{g\inv g b} = b\inv b,
\end{equation}
using Proposition \ref{prp:sapir} in both calculations.
Thus $h\cdot h\inv a=aa\inv a=a$ and $bh\inv\cdot h=bb\inv b=b$, so
$a\greenR h\greenL b$. Next, $h\inv ah = g\inv a\inv aag = g\inv aa\inv ag = g\inv ag = b$, using the complete regularity of $a$ in the second equality. Similarly, $hbh\inv = gbbb\inv g\inv = gbb\inv bg\inv = gbg\inv = a$, using the complete regularity of $b$. Thus $h\in \mathcal{C}_{a,b}$. We have proven (a),(b)$\implies$(c).

Now assume (c). We have $h\inv ah=b$ and $hbh\inv = a$. Moreover since each $\gr$-class and each $\gl$-class
in an inverse semigroup contains exactly one idempotent \cite[Thm.~5.1.1]{Ho95}, we also have
$hh\inv = aa\inv$ and $h\inv h = b\inv b$. We thus compute
\[
b\inv b = h\inv h = h\inv \underbrace{hh\inv} h = h\inv \underbrace{a}a\inv  h =
\underbrace{h\inv h b} \underbrace{h\inv a\inv h} = b (h\inv a h)\inv = bb\inv\,,
\]
using Proposition \ref{prp:sapir} in the fifth equality. Therefore $b$ is completely regular, that is, (b) holds.
\end{proof}

\begin{prop}\label{prp:group_conj}
  Let $S$ be an inverse semigroup and let $a,b\in S$ satisfy $a\ci b$.
 If $a,b$ lie in the same group $\greenH$-class $H$, then
  $a$ and $b$ are group conjugate in $H$.
\end{prop}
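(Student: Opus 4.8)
The plan is to exploit Lemma~\ref{lem:CR} to replace an arbitrary conjugating element by one that actually lies inside the group $\greenH$-class $H$. First I would record that, since $a$ and $b$ both lie in $H$, they are completely regular and share the identity $e$ of $H$; in particular $aa\inv = a\inv a = e = bb\inv = b\inv b$. Since $a\ci b$ and both conditions (a) and (b) of Lemma~\ref{lem:CR} hold, that lemma supplies some $h\in\mathcal{C}_{a,b}$ with $a\greenR h\greenL b$.

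Next I would pin down where $h$ lives. Because each $\greenR$-class and each $\greenL$-class of an inverse semigroup contains a unique idempotent \cite[Thm.~5.1.1]{Ho95}, the relations $a\greenR h$ and $h\greenL b$ force $hh\inv = aa\inv = e$ and $h\inv h = b\inv b = e$. Hence $h\,\gh\,e$, so $h$ belongs to the maximal subgroup $H_e = H$.

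Finally, from $h\in\mathcal{C}_{a,b}$ we have $h\inv ah = b$, and now $h$, $a$, $b$ all lie in the group $H$; since in a group $h\inv ah = b$ is equivalent to $hbh\inv = a$, this exhibits $a$ and $b$ as group conjugate in $H$, which completes the proof.

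I do not expect a serious obstacle here. The only subtlety is recognising that an arbitrary $g\in\mathcal{C}_{a,b}$ need not itself lie in $H$ — by Proposition~\ref{prp:sapir} one only gets $gg\inv\geq e$ and $g\inv g\geq e$ in the semilattice order — so one must pass to the ``trimmed'' conjugator produced by Lemma~\ref{lem:CR}, whose two idempotents are forced to coincide with $e$.
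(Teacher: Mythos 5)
Your proposal is correct and follows essentially the same route as the paper: both invoke Lemma~\ref{lem:CR} to obtain $h\in\mathcal{C}_{a,b}$ with $a\greenR h\greenL b$ and then conclude $h\in H$ (the paper deduces $h\greenH a$ directly from $a\greenH b$, while you reach the same conclusion via the uniqueness of idempotents in $\greenR$- and $\greenL$-classes). Your closing remark about why an arbitrary $g\in\mathcal{C}_{a,b}$ cannot be used is exactly the right subtlety to flag.
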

\begin{proof}
  Since $H$ is a group, both $a$ and $b$ are completely regular, and so by
  Lemma \ref{lem:CR}, there exists $h\in \mathcal{C}_{a,b}$ such that $a\greenR h\greenL b$.
  Since $a\greenH b$, we have $h\greenH a$, that is, $h\in H$.
\end{proof}

If every element of a semigroup $S$ is completely regular, we say that $S$ is a
\emph{completely regular semigroup}. A semigroup that is both inverse and completely regular
is called a \emph{Clifford semigroup}. One can characterize Clifford semigroups
in several ways, some of which will be useful in what follows.

\begin{prop}{\rm\hskip -1mm \cite[Thm.~4.2.1]{Ho95}}
\label{prp:clifford}
  Let $S$ be an inverse semigroup. The following are equivalent:
  \begin{itemize}
    \item[\rm{(a)}] $S$ is a Clifford semigroup;
    \item[\rm{(b)}] for all $a\in S$, $aa\inv = a\inv a$;
    \item[\rm{(c)}] for all $a\in S$, $e\in E(S)$, $ea=ae$;
    \item[\rm{(d)}] $\greenL = \greenR = \greenH$.
  \end{itemize}
\end{prop}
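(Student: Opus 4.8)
The plan is to organize all the equivalences around condition (b), proving (a)$\iff$(b), (b)$\iff$(d), and (b)$\iff$(c); only the last hides any real work.

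The equivalence (a)$\iff$(b) is essentially already recorded in the discussion preceding Lemma~\ref{lem:CR}: in an inverse semigroup an element $a$ is completely regular if and only if $aa\inv = a\inv a$, and $S$ is a Clifford semigroup precisely when it is inverse and every element is completely regular. For (b)$\iff$(d), recall that in an inverse semigroup the $\greenL$-class of $a$ contains the unique idempotent $a\inv a$ and the $\greenR$-class of $a$ contains the unique idempotent $aa\inv$ \cite[Thm.~5.1.1]{Ho95}, and that one always has $a\greenL a\inv a$ and $a\greenR aa\inv$. If (b) holds and $a\greenL b$, then $aa\inv = a\inv a = b\inv b = bb\inv$, so $a\greenR b$; hence $\greenL\subseteq\greenR$, and by the left--right symmetry $\greenR\subseteq\greenL$, giving $\greenL = \greenR = \greenH$. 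Conversely, if $\greenL = \greenR$, then from $a\greenL a\inv a$ and $a\greenR aa\inv$ we get $a\greenL aa\inv$, so $a\inv a$ and $aa\inv$ are idempotents in the same $\greenL$-class and therefore equal; this is (b).

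The substance is (b)$\Rightarrow$(c). Fix $a\in S$ and $e\in E(S)$; I want $ae = ea$. Since idempotents commute in an inverse semigroup and $(ae)\inv = ea\inv$, one computes $(ae)(ae)\inv = ae^2a\inv = aea\inv$ and $(ae)\inv(ae) = ea\inv ae = e(a\inv a)e = (a\inv a)e$. Applying (b) to the element $ae$ therefore gives $aea\inv = (a\inv a)e$. Applying (b) to $a$ gives $a\inv a = aa\inv$, so $aea\inv = (aa\inv)e = e(aa\inv)$; multiplying this equation on the right by $a$, the right-hand side becomes $e(aa\inv a) = ea$, while the left-hand side is $(ae)(a\inv a) = a(a\inv a)e = ae$, using once more that $e$ commutes with the idempotent $a\inv a$. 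Hence $ae = ea$, so $E(S)$ is central. The converse (c)$\Rightarrow$(b) is a two-line computation: if $E(S)$ is central then $a = a(a\inv a) = (a\inv a)a = a\inv a^2$ and $a = (aa\inv)a = a(aa\inv) = a^2a\inv$, whence $aa\inv = (a\inv a^2)a\inv = a\inv(a^2 a\inv) = a\inv a$.

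The only delicate point is the bookkeeping in (b)$\Rightarrow$(c): one must invoke hypothesis (b) a second time to rewrite $a\inv a$ as $aa\inv$ before the final right-multiplication by $a$, since otherwise a stray factor $a^2$ survives and the cancellation stalls. Everything else is forced by the two standard facts that idempotents commute in an inverse semigroup and that each Green class of an inverse semigroup carries exactly one idempotent.
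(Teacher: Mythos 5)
Your proof is correct. Note that the paper does not prove this proposition at all: it is imported wholesale from Howie \cite[Thm.~4.2.1]{Ho95}, where the usual argument is structural, passing through the characterization of Clifford semigroups as (strong) semilattices of groups. What you give instead is a direct, purely equational verification of the four stated equivalences inside the class of inverse semigroups, resting only on two standard facts: idempotents commute, and each $\greenL$-class (resp.\ $\greenR$-class) contains the unique idempotent $a\inv a$ (resp.\ $aa\inv$). I checked each cycle: (a)$\iff$(b) is exactly the remark preceding Lemma~\ref{lem:CR}; (b)$\iff$(d) via the uniqueness of idempotents in Green classes is sound in both directions; the computation $(ae)(ae)\inv = aea\inv$ and $(ae)\inv(ae) = (a\inv a)e$ followed by the second application of (b) to turn $a\inv a$ into $aa\inv$ before right-multiplying by $a$ does yield $ae=ea$, and the step you flag as delicate is indeed where a naive attempt stalls; the converse (c)$\Rightarrow$(b) computation is also correct. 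The payoff of your route is that it is elementary and self-contained, at the cost of not exposing the semilattice-of-groups structure that the textbook proof delivers as a byproduct (and which Theorem~\ref{thm:Clifford_H} implicitly leans on).
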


\begin{theorem}
\label{thm:Clifford_H}
Let $S$ be a Clifford semigroup. Then for all $a,b\in S$, $a\ci b$ if and only if $a$ and $b$ belong
to the same $\greenH$-class $H$ and they are group conjugate in $H$.
\end{theorem}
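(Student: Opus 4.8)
The plan is to prove both implications using the structural results already assembled in this section. The forward direction is the substantive one, and the reverse is essentially immediate.

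\medskip

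For the reverse implication, suppose $a,b$ lie in a common $\greenH$-class $H$ and are group conjugate in $H$, say $h\inv ah = b$ for some $h\in H$, where the inverse is taken in the group $H$. Since $H$ is a group $\greenH$-class, the inverse of $h$ in $H$ coincides with its inverse $h\inv$ in $S$, and then $h\inv ah = b$ automatically gives $hbh\inv = a$ by conjugating back. Hence $h\in\mathcal{C}_{a,b}$ and $a\ci b$.

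\medskip

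For the forward implication, assume $a\ci b$. First I would apply Lemma~\ref{lem:CR}: since $S$ is a Clifford semigroup, every element is completely regular, so condition (a) of that lemma holds, and therefore condition (c) gives an element $h\in\mathcal{C}_{a,b}$ with $a\greenR h\greenL b$. Now invoke Proposition~\ref{prp:clifford}(d): in a Clifford semigroup $\greenL=\greenR=\greenH$, so from $a\greenR h$ and $h\greenL b$ we get $a\greenH h$ and $h\greenH b$, whence $a\greenH b$; let $H$ be their common $\greenH$-class. Since $S$ is Clifford, $H$ is a group (by Proposition~\ref{prp:clifford}(b), $aa\inv = a\inv a$, so $H_a$ is a group). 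Thus $a$ and $b$ lie in the same group $\greenH$-class $H$, and Proposition~\ref{prp:group_conj} immediately yields that $a$ and $b$ are group conjugate in $H$.

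\medskip

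There is no real obstacle here: the theorem is a corollary of the machinery built up in Lemmas~\ref{lem:same_g}, \ref{lem:CR} and Proposition~\ref{prp:group_conj}, combined with the characterization of Clifford semigroups in Proposition~\ref{prp:clifford}. The only point requiring a moment's care is making sure the ``group conjugate in $H$'' in the hypothesis of the converse is interpreted with inverses taken in $H$, and observing that this is consistent with inverses in $S$ because $H$ is a group $\greenH$-class; this is exactly the remark made before Lemma~\ref{lem:CR} that for completely regular $a$ the inverse $a\inv$ in $S$ is also the inverse of $a$ in $H_a$.
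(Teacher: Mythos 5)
Your proof is correct and follows essentially the same route as the paper's: the converse is immediate (the group inverse in $H$ agreeing with the inverse in $S$), and the forward direction combines Lemma~\ref{lem:CR} with Proposition~\ref{prp:clifford}(d) to place $a$, $h$, $b$ in a common group $\greenH$-class and then applies Proposition~\ref{prp:group_conj}. You have merely written out the details the paper leaves implicit.
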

\begin{proof}
  The ``if'' direction is clear. For the converse, if $a\ci b$, then
  by Lemma \ref{lem:CR} and Proposition \ref{prp:clifford}(d), there exists $h\in \mathcal{C}_{a,b}$ such that $a\greenH h\greenH b$. The rest follows from Proposition \ref{prp:group_conj}.
\end{proof}

Using $i$-conjugacy, we can give new characterizations of Clifford semigroups in the class of
inverse semigroups.

\begin{theorem}
\label{thm:Clifford_char}
Let $S$ be an inverse semigroup. The following are equivalent:
\begin{itemize}
\item[\rm(a)] $S$ is a Clifford semigroup;
\item[\rm(b)] $\ci\,\,\subseteq\greenH$;
\item[\rm(c)] $\ci\,\,\subseteq\greenR$;
\item[\rm(d)] $\ci\,\,\subseteq\greenL$;
\item[\rm(e)] no two distinct idempotents in $S$ are conjugate.
\end{itemize}
\end{theorem}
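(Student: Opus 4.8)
The plan is to prove the chain of implications $\mathrm{(a)}\implies\mathrm{(b)}\implies\mathrm{(c)},\mathrm{(d)}\implies\mathrm{(e)}\implies\mathrm{(a)}$, which cycles through all five conditions. The implication $\mathrm{(a)}\implies\mathrm{(b)}$ is essentially already contained in Theorem~\ref{thm:Clifford_H}: if $S$ is a Clifford semigroup and $a\ci b$, then $a$ and $b$ lie in a common $\greenH$-class, so $a\ghh b$. (Alternatively, invoke Lemma~\ref{lem:CR} together with Proposition~\ref{prp:clifford}(d), exactly as in the proof of Theorem~\ref{thm:Clifford_H}.) The implications $\mathrm{(b)}\implies\mathrm{(c)}$ and $\mathrm{(b)}\implies\mathrm{(d)}$ are trivial since $\greenH=\greenL\cap\greenR\subseteq\greenR$ and $\greenH\subseteq\greenL$.

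The heart of the argument is showing $\mathrm{(c)}\implies\mathrm{(e)}$ and $\mathrm{(d)}\implies\mathrm{(e)}$. Suppose $\ci\,\subseteq\,\greenR$, and let $e,f\in E(S)$ with $e\ci f$. Then $e\grr f$, so $eS^1=fS^1$, whence $ef=f$ and $fe=e$ (each idempotent is a right identity on its own $\greenR$-class intersected with $E(S)$; more directly, $e\grr f$ in an inverse semigroup forces $e=f$ since each $\greenR$-class contains a unique idempotent, cited as \cite[Thm.~5.1.1]{Ho95}). Hence $e=f$. The case $\ci\,\subseteq\,\greenL$ is dual, using $eS^1a = \ldots$—more simply, using that each $\greenL$-class contains a unique idempotent. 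So no two distinct idempotents are conjugate, giving $\mathrm{(e)}$.

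The final and most substantive step is $\mathrm{(e)}\implies\mathrm{(a)}$. Assume no two distinct idempotents of $S$ are conjugate. Fix $x\in S$. By Lemma~\ref{lem:xx'x'x}, $xx\inv\ci x\inv x$, and both are idempotents; by hypothesis they must coincide, so $xx\inv = x\inv x$. Since $x$ was arbitrary, Proposition~\ref{prp:clifford}(b) gives that $S$ is a Clifford semigroup, which is $\mathrm{(a)}$. This closes the cycle. I expect the $\mathrm{(e)}\implies\mathrm{(a)}$ step to be the conceptually pivotal one, but it is short once Lemma~\ref{lem:xx'x'x} is in hand; the only point requiring a little care is the step from $e\grr f$ (or $e\gll f$) to $e=f$ for idempotents, where one should cite the uniqueness of idempotents in $\greenR$- and $\greenL$-classes of an inverse semigroup \cite[Thm.~5.1.1]{Ho95} rather than attempting a computation.
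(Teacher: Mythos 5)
Your proposal is correct and follows essentially the same route as the paper: (a)$\implies$(b) via Theorem~\ref{thm:Clifford_H}, the trivial inclusions $\greenH\subseteq\greenR,\greenL$ for (b)$\implies$(c),(d), the uniqueness of the idempotent in each $\greenR$- and $\greenL$-class for (c),(d)$\implies$(e), and Lemma~\ref{lem:xx'x'x} together with Proposition~\ref{prp:clifford} for (e)$\implies$(a). No gaps.
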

\begin{proof}
We have (a)$\implies$(b) by Theorem~\ref{thm:Clifford_H}. The implications (b)$\implies$(c)
and (b)$\implies$(d) follow from $\greenH\subseteq\greenR$ and $\greenH\subseteq\greenL$.
We have (c)$\implies$(e) and (d)$\implies$(e) by the fact that every $\greenR$-class
and every $\greenL$-class of an inverse semigroup contains exactly one idempotent.
Finally, suppose (e) holds. For $a\in S$, $aa\inv$ and $a\inv a$ are idempotents and we have
$aa\inv\ci a\inv a$ by Lemma \ref{lem:xx'x'x}. Thus $aa\inv = a\inv a$. Then (a) follows
from Proposition \ref{prp:clifford}. This completes the proof.
\end{proof}

Recall that a group $G$ is abelian if and only if the conjugacy $\cgp$ is the identity relation.
This generalizes to inverse semigroups.

\begin{theorem}\label{thm:commutative}
  Let $S$ be an inverse semigroup. Then $S$ is commutative if and only if $\ci$ is the identity relation.
\end{theorem}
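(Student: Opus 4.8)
The plan is to prove the two implications separately, the converse being the substantive one. For the forward direction, suppose $S$ is commutative. Then $S$ is trivially a Clifford semigroup (condition~(c) of Proposition~\ref{prp:clifford} holds vacuously), and every subgroup of $S$ is abelian. If $a\ci b$, then by Theorem~\ref{thm:Clifford_H} the elements $a$ and $b$ lie in a common group $\gh$-class $H$ and are group conjugate in $H$; since $H$ is abelian, $a=b$. (One can also argue directly: for $g\in S^1$ with $g\inv ag=b$ and $gbg\inv=a$, either $g=1$ and there is nothing to prove, or $g\in S$ and, by commutativity together with condition~(vi) of Proposition~\ref{prp:sapir}, $b=g\inv ag=g\inv ga=gg\inv a=a$.)

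For the converse, assume $\ci$ is the identity relation. First I would observe that $S$ is a Clifford semigroup: for each $x\in S$, Lemma~\ref{lem:xx'x'x} gives $xx\inv\ci x\inv x$, hence $xx\inv=x\inv x$, and Proposition~\ref{prp:clifford}(b) applies (equivalently, condition~(e) of Theorem~\ref{thm:Clifford_char} holds). Next I would show that every maximal subgroup of $S$ is abelian: if $H$ is a group $\gh$-class and $a,b\in H$ are group conjugate in $H$, then conditions~(i) and~(ii) of Proposition~\ref{prp:sapir} already hold inside the group $H$ (with conjugating element $g\in H\subseteq S^1$), so $a\ci b$ and therefore $a=b$; thus group conjugacy is trivial in $H$.

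It then remains to show that a Clifford semigroup all of whose maximal subgroups are abelian is commutative, which is the crux of the proof. Given $a,b\in S$, set $e=aa\inv=a\inv a$ and $f=bb\inv=b\inv b$. Using that idempotents are central (Proposition~\ref{prp:clifford}(c)), a short computation gives $(ab)(ab)\inv=(ab)\inv(ab)=ef=(ba)(ba)\inv=(ba)\inv(ba)$, so both $ab$ and $ba$ lie in the maximal subgroup $H_{ef}$. Setting $\hat a=efa$ and $\hat b=efb$, one checks $\hat a,\hat b\in H_{ef}$ and, moving the central idempotent $ef$ about and using $(ef)(ab)=ab$, that $\hat a\hat b=ab$ and $\hat b\hat a=ba$; since $H_{ef}$ is abelian, $ab=\hat a\hat b=\hat b\hat a=ba$. (Alternatively one may invoke that a Clifford semigroup is a strong semilattice of its maximal subgroups and that a strong semilattice of abelian groups is commutative, but the computation above is self-contained.) The only delicate point is this last step — tracking which products lie in which $\gh$-class and applying centrality of idempotents at each stage — but it is entirely routine.
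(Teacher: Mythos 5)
Your proof is correct and follows essentially the same route as the paper's: both directions reduce to the Clifford case (via Theorem~\ref{thm:Clifford_char} and Lemma~\ref{lem:xx'x'x}), and then commutativity is tied to abelianness of the group $\greenH$-classes, which in turn is tied to triviality of $\ci$ via Theorem~\ref{thm:Clifford_H} and Proposition~\ref{prp:sapir}. The only difference is that you explicitly verify (with the $ef$-computation) the step that a Clifford semigroup whose maximal subgroups are all abelian is commutative, a fact the paper asserts without proof; your verification is correct and self-contained.
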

\begin{proof}
  Every commutative inverse semigroup is Clifford. On the other hand, if $\ci$ is the identity relation,
  then $S$ is Clifford by Theorem \ref{thm:Clifford_char}. Thus we may assume from the outset that $S$ is a Clifford semigroup. The desired result then follows from the following chain of
  equivalences: $S$ is commutative if and only if each $\greenH$-class is an abelian
  group if and only if group conjugacy within each $\greenH$-class is
  the identity relation if and only if $\ci$ is the identity relation (by Theorem \ref{thm:Clifford_H}).
\end{proof}

Looking at conditions (b), (c) and (d) of Theorem \ref{thm:Clifford_char}, it is natural to ask what can be said
if the opposite inclusions hold. We conclude this section with two results that answer this question.

\begin{theorem}\label{thm:semilattice}
  Let $S$ be an inverse semigroup. The following are equivalent:
  \begin{itemize}
    \item[\rm(a)] $S$ is a semilattice;
    \item[\rm(b)] $\greenL\subseteq\ \ci$;
    \item[\rm(c)] $\greenR\subseteq\ \ci$.
  \end{itemize}
\end{theorem}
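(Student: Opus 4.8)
The plan is to establish the equivalence (a)$\Leftrightarrow$(b); the equivalence (a)$\Leftrightarrow$(c) is then the left-right dual, and the same arguments apply verbatim with $\greenL$ and $a\inv a$ replaced by $\greenR$ and $aa\inv$.

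For (a)$\implies$(b): if $S$ is a semilattice, then for every $a\in S$ one has $S^1a=\{x\in S:x\leq a\}$, since each $sa$ satisfies $sa\leq a$ and conversely each $x\leq a$ equals $xa\in S^1a$. Hence $a\greenL b$ forces $a\leq b$ and $b\leq a$, so $a=b$ by antisymmetry of the natural partial order. Thus $\greenL$ is the identity relation on $S$, and since $\ci$ is reflexive (Lemma~\ref{lem:equiv}) we conclude $\greenL\subseteq\ci$.

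For (b)$\implies$(a), the key ingredient is the fact --- already used in the proof of Theorem~\ref{thm:universal} --- that in an inverse semigroup a conjugate $g\inv eg$ of an idempotent $e$ by an element $g\in S$ is again an idempotent. Fix $a\in S$. From $a=a(a\inv a)$ and $a\inv a=a\inv\cdot a$ we get $S^1a=S^1(a\inv a)$, that is, $a\greenL a\inv a$, with $a\inv a\in E(S)$. By (b), $a\ci a\inv a$, so there is $g\in S^1$ with $g(a\inv a)g\inv=a$. If $g=1$ then $a=a\inv a\in E(S)$; if $g\in S$ then $a=g(a\inv a)g\inv$ is a conjugate of an idempotent, hence $a\in E(S)$. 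Either way $a\in E(S)$, and since $a$ was arbitrary, $S=E(S)$ is a semilattice.

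I do not expect a genuine obstacle here: the argument is short once one observes that $a$ is $\greenL$-related to the idempotent $a\inv a$ and invokes the ``conjugate of an idempotent is an idempotent'' fact. The only points needing care are the $g=1$ case (trivial) and setting up the duality for (c) correctly --- using the symmetry of $\ci$ to extract the conjugating element from $aa\inv\ci a$. An alternative, longer route to (b)$\implies$(a) would first show $S$ is Clifford (from $a\ci a\inv a$ and Lemma~\ref{lem:CR}), then combine $\greenH=\greenL\subseteq\ci$ with $\ci\subseteq\greenH$ (Proposition~\ref{prp:clifford}(d) and Theorem~\ref{thm:Clifford_char}) to force $\ci=\greenH$, and finally apply Theorem~\ref{thm:Clifford_H} to conclude every maximal subgroup is trivial; but the direct argument above is cleaner and self-contained, so that is the one I would write up.
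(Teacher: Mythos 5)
Your proof is correct and follows essentially the same route as the paper: the forward direction rests on $\greenL$ (resp.\ $\greenR$) being trivial in a semilattice, and the converse applies the hypothesis to the pair $a \mathrel{\greenL} a\inv a$ (resp.\ $aa\inv \mathrel{\greenR} a$) and uses that a conjugate of an idempotent is an idempotent. The only cosmetic difference is that the paper spells out (c)$\implies$(a) and declares (b)$\implies$(a) similar, whereas you do the reverse.
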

\begin{proof}
  In a semilattice, $\greenL$ and $\greenR$ are trivial, so (a)$\implies$(b) and (a)$\implies$(c) follow.
  Assume (c). For each $a\in S$, $aa\inv \greenR a$, and so $g\inv aa\inv g = a$ for some $g\in S^1$. But
  every conjugate of an idempotent is an idempotent, so each $a\in S$ is idempotent. Thus (a) holds.
  The proof of (b)$\implies$(a) is similar.
\end{proof}

A semigroup is said to be $\greenH$-\emph{trivial} if $\greenH$ is the identity relation.

\begin{theorem}\label{thm:H-trivial}
  Let $S$ be an inverse semigroup. Then $\greenH\subseteq\ \ci$ if and only if $S$ is $\greenH$-trivial.
\end{theorem}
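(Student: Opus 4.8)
The plan is to prove the two implications separately; the backward direction is immediate, and the forward direction is where the work lies. For the backward implication, if $S$ is $\greenH$-trivial then $\greenH$ is the equality relation on $S$, and since $\ci$ is reflexive (Lemma~\ref{lem:equiv}) we obtain $\greenH\subseteq\ \ci$ at once.

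For the forward implication, assume $\greenH\subseteq\ \ci$. The first step is to show that \emph{every maximal subgroup of $S$ is trivial}. Fix an idempotent $e\in E(S)$ and let $x$ be an arbitrary element of the maximal subgroup $H_e$ (the $\greenH$-class of $e$). Then $x\greenH e$, so $x\ci e$ by hypothesis, and the definition of $\ci$ supplies some $g\in S^1$ with $g^{-1}xg=e$ and $geg^{-1}=x$. The second equation exhibits $x$ as a conjugate of the idempotent $e$, and -- exactly as used in the proofs of Theorem~\ref{thm:universal} and Theorem~\ref{thm:semilattice} -- a conjugate of an idempotent in an inverse semigroup is again an idempotent (indeed $g^{-1}eg$, equivalently $geg^{-1}$, is always idempotent since $E(S)$ is commutative). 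As $e$ is the only idempotent of the group $H_e$, we conclude $x=e$, whence $H_e=\{e\}$.

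The second step promotes this to the statement that \emph{every $\greenH$-class of $S$ is a singleton}, i.e.\ that $\greenH$ is the equality relation and $S$ is $\greenH$-trivial. Let $a\in S$. Since $a\greenL a^{-1}a$, the map $x\mapsto a^{-1}x$ sends $H_a$ into $H_{a^{-1}a}$ (a routine $\greenH$-class verification: $(a^{-1}x)(a^{-1}x)^{-1}=a^{-1}(aa^{-1})a=a^{-1}a=(a^{-1}x)^{-1}(a^{-1}x)$ for $x\in H_a$), and this map is injective: if $a^{-1}x=a^{-1}y$ with $x,y\in H_a$, then left-multiplying by $a$ and using $aa^{-1}=xx^{-1}=yy^{-1}$ gives $x=aa^{-1}x=aa^{-1}y=y$. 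Hence $|H_a|\leq|H_{a^{-1}a}|$, and $H_{a^{-1}a}$ is a maximal subgroup, so $|H_{a^{-1}a}|=1$ by the first step; therefore $|H_a|=1$ for every $a\in S$, which is exactly the conclusion. (Alternatively one may invoke the standard fact that all $\greenH$-classes within a single $\greenD$-class have equal cardinality, applied to $a\greenD aa^{-1}$.)

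I do not foresee a genuine obstacle. The only points requiring a little care are (i) citing the ``conjugate of an idempotent is an idempotent'' fact in the form in which it actually holds -- harmless, as just noted -- and (ii) checking that $x\mapsto a^{-1}x$ really lands in $H_{a^{-1}a}$ and is injective, which is the routine $\greenH$-class bookkeeping sketched above.
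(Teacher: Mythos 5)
Your proof is correct, but it reaches the conclusion by a genuinely different route from the paper's in both halves of the forward direction. For the triviality of group $\greenH$-classes, the paper invokes Proposition~\ref{prp:group_conj} (via Lemma~\ref{lem:CR}) to get that all elements of a group $\greenH$-class are group conjugate, and then uses the fact that the identity of a group is conjugate only to itself; you instead observe directly that $x\ci e$ forces $x=geg\inv$ to be an idempotent, and a group $\greenH$-class has only one idempotent. Your version is more elementary, bypassing the completely-regular machinery entirely. For the passage from ``all group $\greenH$-classes are trivial'' to ``all $\greenH$-classes are trivial,'' the paper computes $(ba\inv)\inv ba\inv = ba\inv(ba\inv)\inv$ for $a\greenH b$, concludes that $ba\inv$ lies in a group $\greenH$-class with identity $aa\inv$, hence $ba\inv=aa\inv$, and then uses the natural partial order characterization (N) twice to get $a\leq b$ and $b\leq a$; you instead exhibit the injective translation $x\mapsto a\inv x$ from $H_a$ into the group $\greenH$-class $H_{a\inv a}$, which is the standard Green's-Lemma cardinality argument. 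Both are sound; the paper's argument stays entirely inside the conjugacy/partial-order framework it has been developing (and reuses (N), which appears throughout Sections~\ref{sec:factorizable} and~\ref{sec:stable}), while yours trades that for a self-contained piece of Green's-relations bookkeeping that some readers may find more transparent. Your bookkeeping checks out: $xx\inv=aa\inv$ and $x\inv x=a\inv a$ for $x\in H_a$ give $(a\inv x)(a\inv x)\inv=(a\inv x)\inv(a\inv x)=a\inv a$, and $x=aa\inv x$ recovers injectivity.
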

\begin{proof}
The ``if'' direction is obvious, so assume $\greenH\subseteq\ \ci$. If $H$ is a group $\greenH$-class, then by Proposition \ref{prp:group_conj}, all elements of $H$ are group conjugate, hence $H$ is a trivial subgroup. Now suppose $a\greenH b$.
We compute
\[
(ba\inv)\inv ba\inv = ab\inv ba\inv = aa\inv aa\inv = aa\inv = bb\inv = bb\inv bb\inv = ba\inv ab\inv = ba\inv (ba\inv)\inv\,.
\]
Thus $ba\inv$ is completely regular, that is, it is in some group $\greenH$-class $H$.
By the above computation, $aa\inv$ is the identity in $H$.
Since $H$ is trivial, $ba\inv = aa\inv$. By (N), we have $a\leq b$. Repeating the argument with the roles of $a$ and $b$ reversed,
we also obtain $b\leq a$.
Thus $a=b$. Therefore $\greenH$ is the identity relation as claimed.
\end{proof}

\section{The Bicyclic Monoid and Stable Inverse Semigroups}
\label{sec:stable}

The \emph{bicyclic monoid} $\Bi$, which is an inverse semigroup, is usually defined in terms of a monoid presentation
$\langle x,y\mid xy = 1\rangle$ \cite[p.~32]{Ho95} \cite[Sect.~3.4]{La98}. It has a more convenient isomorphic realization
as the set $\Bi$ of ordered pairs of nonnegative integers with the following multiplication:
\[
(a, b) (c, d) = (a - b + \max(b, c), d - c + \max(b, c))\,.
\]
For any $(a,b)\in \Bi$, $(a,b)\inv = (b,a)$ and $(a,b)$ is an idempotent if and only if $a = b$. The smallest group congruence $\relsigma$ in $\Bi$ is characterized as follows \cite[p. 101]{La98}:
\[
(a,b)\relsigma (c,d) \iff a-b=c-d.
\]

\begin{theorem}\label{thm:bicyclic}
In $\Bi$, $\ci\ =\ \relsigma$.
\end{theorem}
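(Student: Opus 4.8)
The plan is to prove the two inclusions $\ci\ \subseteq\ \relsigma$ and $\relsigma\ \subseteq\ \ci$ separately, using the explicit coordinate description of $\Bi$. The first inclusion is essentially automatic: since $\relsigma$ is a congruence, it is in particular compatible with inversion, so if $g\inv ag = b$ then $a\relsigma b$ (as $g\inv ag\relsigma g\inv gg^{-1}\ldots$, or more directly $a\relsigma g g\inv a g g\inv\relsigma g(g\inv a g)g\inv\ldots$; cleanest is to note $a\relsigma 1$-conjugate-stuff collapses). In fact it is even simpler: $\ci$ is always contained in any congruence $\rho$ for which $S/\rho$ is a group, because in a group the relation defined by \eqref{eq:inv_conj} is just ordinary conjugacy, hence an equivalence, and $a\ci b$ in $S$ forces $\bar a$ and $\bar b$ conjugate in $S/\rho$, but I also want equality of the $\sigma$-classes — which holds since $\Bi/\relsigma\cong(\mathbb Z,+)$ is \emph{abelian}, so conjugacy there is equality. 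Thus $a\ci b\implies a\relsigma b$.

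For the reverse inclusion $\relsigma\ \subseteq\ \ci$, suppose $(a,b)\relsigma(c,d)$, i.e. $a-b=c-d=:n$. I would like to exhibit an explicit $g=(p,q)\in\Bi$ with $g\inv(a,b)g=(c,d)$ and $g(c,d)g\inv=(a,b)$, or equivalently (by Proposition~\ref{prp:sapir}(b)) with $g\inv(a,b)g=(c,d)$ together with $(a,b)\cdot gg\inv=(a,b)$ and $gg\inv\cdot(a,b)=(a,b)$. Compute $gg\inv=(p,p)$, so the last two conditions say $(a,b)(p,p)=(a,b)$ and $(p,p)(a,b)=(a,b)$; using the multiplication formula these reduce to $p\le b$ and $p\le a$, i.e. $p\le\min(a,b)$. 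Then $g\inv(a,b)g=(q,p)(a,b)(p,q)$, and a direct computation with the formula (splitting on which of $q,b,a,p$ achieves the various maxima, but all made transparent by the constraint $p\le\min(a,b)$) gives $g\inv(a,b)g=(a-p+q,\ b-p+q)$. So I need $a-p+q=c$ and $b-p+q=d$; since $a-b=c-d$ these are the \emph{same} equation, namely $q-p=c-a=d-b$. Now simply pick $p=\min(a,b,c,d)$ and $q=p+(c-a)$; the choice $p\le\min(a,b)$ makes the idempotent conditions hold, and one checks $q\ge 0$ because $q=p+c-a\ge p+c-a$ and $p\le c$ forces... actually $q = p + (c-a)$; if $c-a\ge 0$ then $q\ge p\ge0$; if $c-a<0$ then $c-a=d-b$ so $a>c$ means $p\le c$ gives $q=p-(a-c)\ge$ could be negative, so instead take $p=\min(a,b,c,d)$ and then $q=p+c-a$: since $p\le a$ we get... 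I would instead choose $p$ small enough that both $p\le\min(a,b)$ and $p+(c-a)\ge 0$, e.g. $p=\max(0,\ a-c)$ won't do either — the safe universal choice is $p$ with $p\le\min(a,b)$ and $p\ge a-c$, i.e. $p\in[\max(0,a-c),\ \min(a,b)]$, which is nonempty since $a-c=b-d\le b$ and $a-c\le a$ (using $c,d\ge0$), and $0\le\min(a,b)$. Pick any such $p$, set $q=p-(a-c)\ge0$, and $g=(p,q)$ works. Then verify $g(c,d)g\inv=(a,b)$ symmetrically (or invoke Proposition~\ref{prp:sapir}).

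The main obstacle I anticipate is purely bookkeeping: the multiplication in $\Bi$ is piecewise-defined via $\max$, so the computation $g\inv(a,b)g=(a-p+q,b-p+q)$ requires care in checking that, under the hypothesis $p\le\min(a,b)$, the relevant maxima collapse to the expected values (e.g. in $(q,p)(a,b)=(q-p+\max(p,a),\,b-p+\max(p,a))=(q-p+a,\,b-p+a)$ since $p\le a$, and then multiplying on the right by $(p,q)$ again simplifies because the middle coordinate $b-p+a\ge b\ge p$). None of this is deep, but it is the only place where one actually has to touch the formula, so I would present it as one clean lemma-style computation rather than several case splits. Once the coordinates are verified, both inclusions are immediate and the theorem follows.
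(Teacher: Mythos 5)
Your proposal is correct. The two halves sit differently relative to the paper's proof. For the inclusion $\ci\ \subseteq\ \relsigma$, the paper expands $(f,e)(c,d)(e,f)$ coordinate by coordinate and reads off $a-b=c-d$; you instead push the conjugacy through the quotient homomorphism onto $\Bi/\relsigma\cong(\mathbb Z,+)$ and use that conjugacy in an abelian group is equality. Once you strip out the false starts, this is the cleaner and more general argument (it shows $\ci$ is contained in any group congruence with abelian quotient; note that the image of $g\inv$ is the group inverse of the image of $g$ because $gg\inv g=g$). For the reverse inclusion your argument is essentially the paper's: exhibit a conjugator $g=(p,q)$ with $p\le\min(a,b)$ and $q-p=c-a=d-b$, check the idempotent conditions of Proposition~\ref{prp:sapir}(b), and compute $g\inv(a,b)g=(a-p+q,\,b-p+q)$. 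The paper simply makes the specific choice $p=\min(a,b)$, $q=\min(c,d)$, which is the right-hand endpoint of the interval of admissible $p$ you describe (and makes the nonnegativity of $q$ automatic, sparing you the interval-nonemptiness discussion). One small slip in your illustrative computation: $(q,p)(a,b)=(q-p+\max(p,a),\,b-a+\max(p,a))$, so with $p\le a$ the second coordinate is $b$, not $b-p+a$; your final formula $(a-p+q,\,b-p+q)$ is nevertheless correct, as is the verification that $p\le\min(a,b)$ is exactly what forces $(a,b)(p,p)=(p,p)(a,b)=(a,b)$.
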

\begin{proof}
Suppose $(a,b)\ci (c,d)$. Then for some $(e,f)\in \Bi$, we have $(a,b)=(f,e)(c,d)(e,f)$. Expanding this,
we get
\begin{align*}
  a &= f - e - d + c + m \\
  b &= f - e + m
\end{align*}
where $m = \max(d-c+\max(e,c),e)$. Thus $a - b = c - d$, that is, $(a,b)\relsigma (c,d)$.

Conversely, suppose that $(a,b)\relsigma (c,d)$, so $a-b=c-d$. We claim that for
$x=\min(c,d)$ and $y=\min(a,b)$, we have
\begin{equation}\label{Eq:sigma_conj}
  (a,b) = (y,x)(c,d)(x,y)\quad\text{and}\quad (c,d) = (x,y)(a,b)(y,x)\,.
\end{equation}
To prove this, we compute
\begin{align*}
  (y,x)(c,d)(x,y) &= (y,x)(c-d+\underbrace{\max(d,x)}_d,y-x+\underbrace{\max(d,x)}_d) \\
  &= (y,x)(c,y-x+d) \\
  &= (y-x+\underbrace{\max(x,c)}_c,y-x+d-c+\underbrace{\max(x,c)}_c) \\
  &=(y-x+c,y-x+d)\,,
\end{align*}
and similarly,
\begin{align*}
(x,y)(a,b)(y,x)&= (x,y)(a-b + \underbrace{\max(b,y)}_b, x-y+\underbrace{\max(b,y)}_b) \\
    &= (x,y)(a,x-y+b) \\
    &= (x-y+\underbrace{\max(y,a)}_a, x-y_b-a+\underbrace{\max(y,a)}_a) \\
    &= (x-y+a, x-y+b)\,.
\end{align*}
Comparing the results of these calculations, and noting that $c-a=d-b$, we see that we will have established \eqref{Eq:sigma_conj} once we have proven $x - y = c - a = d - b$.

Observe that $a-b=c-d$ implies that $a\leq b \iff c\leq d$, and also $a\geq b\iff c\geq d$. Thus
$x = c\iff y = a$ and $x = d\iff y = b$. In the former case, $x-y = c-a$ and in the latter case, $x-y = d-b$.
In both cases, we have $c-a=d-b$ because $a-b=c-d$. This completes the proof.
\end{proof}

A semigroup $S$ is \emph{left stable} if, for all $a,b\in S$, $S^1 a \subseteq S^1 ab$ implies $S^1 a = S^1 ab$,
that is, $a\greenL ab$. This can be equivalently formulated as $a\in S^1 ab$ implies $ab\in S^1 a$ for all $a,b\in S$.
\emph{Right stability} is defined dually, and a semigroup is said to be \emph{stable} if it is both left and right stable \cite[p.~31]{CP}.
Every periodic semigroup, and in particular every finite semigroup, is stable. In inverse semigroups, left
and right stability are equivalent. We also have a useful characterization, which in fact holds more generally for
regular semigroups.

\begin{prop}[\cite{RS}, Ex. A.2.2(8), p. 595]
\label{prp:avoids}
  Let $S$ be an inverse semigroup. Then $S$ is stable if and only if $S$ does not contain
  an isomorphic copy of the bicyclic monoid as a subsemigroup.
\end{prop}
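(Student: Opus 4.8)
The plan is to prove both implications through their contrapositives, and to route both of them through a single \emph{key lemma}: if $S$ is an inverse semigroup and $z\in S$ satisfies $zz\inv < z\inv z$ (a strict inequality in the natural partial order between the idempotents $zz\inv$ and $z\inv z$), then the subsemigroup $\langle z,z\inv\rangle$ is isomorphic to $\Bi$. Granting this lemma, the statement becomes a matter of (a) checking that an embedded bicyclic monoid forces non-stability, and (b) manufacturing, from any failure of stability, an element $z$ of the above type.

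To prove the key lemma I would set $e=zz\inv$ and $g=z\inv z$ with $e<g$, and first record the ``identity'' behaviour of $g$: from $ez=z$, $zg=z$, $gz\inv=z\inv$, $z\inv g=z\inv$ together with $ge=eg=e$ one gets $gz=z$ and $z\inv g=z\inv$, so $g$ is a two-sided identity for $\langle z,z\inv\rangle$. An easy induction then gives $(z\inv)^nz^n=g$, whence the reduction rules $(z\inv)^bz^c=z^{\,c-b}$ for $c\ge b$ and $(z\inv)^bz^c=(z\inv)^{\,b-c}$ for $b>c$. These rules show that $(m,n)\mapsto z^m(z\inv)^n$ exactly reproduces the multiplication $(a,b)(c,d)=(a-b+\max(b,c),\,d-c+\max(b,c))$, so the map is a homomorphism from $\Bi$ into $S$. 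Injectivity follows once the range idempotents $r_m=z^m(z\inv)^m$ are shown to be strictly descending: if $r_{m+1}=r_m$ then multiplying by $(z\inv)^m$ on the left and $z^m$ on the right yields $e=g$, contradicting $e<g$; and the domain (resp.\ range) idempotent of $z^m(z\inv)^n$ is $r_n$ (resp.\ $r_m$), so equality of two images forces $m=m'$ and $n=n'$.

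For the forward direction (stable $\Rightarrow$ no bicyclic) I argue the contrapositive: suppose $B\le S$ with $B\cong\Bi$, and let $p,q\in B$ correspond to the two generators, so $pq=e$ is the identity of $B$ and $qp=f$ with $f<e$ and $f\ne e$. Since $B$ is an inverse subsemigroup in which $q$ is an inverse of $p$, uniqueness of inverses in $S$ gives $p\inv=q$ in $S$, hence $p\inv p=qp=f$. Put $a=p$ and $b=q$; then $ab=e$ and $a=pe\in S^1e=S^1(ab)$, while $ab=e\notin S^1a=S^1p$ — for otherwise $S^1e=S^1p$ would make $e$ the unique idempotent of the $\gl$-class of $p$, forcing $e=p\inv p=f$. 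Thus $a\in S^1ab$ but $ab\notin S^1a$, so $S$ is not left stable, hence not stable.

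For the reverse direction (no bicyclic $\Rightarrow$ stable) I again take the contrapositive. If $S$ is not stable then, by the stated equivalence of left and right stability in inverse semigroups, $S$ is not left stable, giving $a,b\in S$ with $a\in S^1ab$ but $ab\notin S^1a$. Translating to the idempotents $f=a\inv a$ and $e=(ab)\inv(ab)$ (which generate the principal left ideals $S^1a$ and $S^1ab$), the inclusion and its strictness read $f<e$. The crucial move is the choice of connecting element $w=a\inv a\,b=fb$: a short computation gives $w\inv w=b\inv f b=(ab)\inv(ab)=e$ and $ww\inv=f(bb\inv)f\le f<e$, so $ww\inv<w\inv w$, and the key lemma produces a bicyclic subsemigroup $\langle w,w\inv\rangle$. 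I expect the reverse direction to be the main obstacle, concentrated in this choice of $w$: one needs an element whose two idempotents are both comparable \emph{and} distinct, and they are automatically forced into a common $\gd$-class since $ww\inv\grr w\gll w\inv w$. Taking $w=fb$ rather than $a\inv(ab)$ is what makes $ww\inv$ sit below $f$ and hence below $e$ without any case distinction on whether $ab\grr a$, which is the delicate point I would want to get exactly right.
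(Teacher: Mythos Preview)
The paper does not actually prove Proposition~\ref{prp:avoids}; it is quoted from \cite{RS} (as an exercise there) and used as a black box in the proof of Theorem~\ref{thm:stable}. So there is no ``paper's own proof'' to compare against.

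That said, your proposal is a correct self-contained proof. The key lemma is sound: once $e=zz\inv<g=z\inv z$, the identities $gz=g(ez)=(ge)z=ez=z$ and $z\inv g=(z\inv e)g=z\inv(eg)=z\inv e=z\inv$ show $g$ is a two-sided identity for $\langle z,z\inv\rangle$, the reduction $(z\inv)^n z^n=g$ follows by induction, and the rest of the bicyclic structure drops out; injectivity via the strictly descending chain $r_m=z^m(z\inv)^m$ is exactly right. The forward direction is fine: the crucial point that a subsemigroup of an inverse semigroup isomorphic to $\Bi$ is automatically an \emph{inverse} subsemigroup (by uniqueness of inverses in $S$) is implicit but correct, so $p\inv=q$ in $S$ and the $\gl$-class argument goes through. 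For the reverse direction, your choice $w=a\inv ab=fb$ is the right move: $w\inv w=b\inv fb=(ab)\inv(ab)=e$ and $ww\inv=f(bb\inv)f\le f<e$, giving $ww\inv<w\inv w$ strictly, so the key lemma applies.

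One small typo: in your list of ``standard'' identities you wrote $z\inv g=z\inv$ where you meant $z\inv e=z\inv$; the former is what you then \emph{derive}.
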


Here we give a new characterization of stability in terms of $i$-conjugacy and the natural partial order.

\begin{theorem}\label{thm:stable}
  An inverse semigroup $S$ is stable if and only if $\ci\cap\leq$ is the identity relation on $S$.
\end{theorem}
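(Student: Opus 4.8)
The plan is to prove both directions using Proposition~\ref{prp:avoids}, which reduces stability to the absence of an embedded copy of $\Bi$.

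For the forward direction, suppose $S$ is stable and that $a\ci b$ with $a\leq b$; I want to show $a=b$. By Lemma~\ref{lem:same_g}, there is $g\in S^1$ with $g\inv ag=b$, $gbg\inv=a$, and moreover $g\in\mathcal{C}_{aa\inv,bb\inv}$, so $g\inv(aa\inv)g=bb\inv$ and $g(bb\inv)g\inv=aa\inv$. Since $a\leq b$, condition (N) gives $ba\inv=aa\inv$ and $a\inv b=a\inv a$, hence also $bb\inv\geq aa\inv$ in $E(S)$ (indeed $aa\inv = ba\inv \cdot ab\inv \leq bb\inv$), i.e. $aa\inv\cdot bb\inv = aa\inv$. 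The idea is to iterate the conjugating element on idempotents: set $e_n = g^{-n}(aa\inv)g^n$ for $n\geq 0$ (with $g^0=1$). Conjugation by $g$ on the relevant idempotents is order-preserving, and combining $g\inv(aa\inv)g=bb\inv\geq aa\inv$ with $g(aa\inv)g\inv \leq g(bb\inv)g\inv = aa\inv$ should produce a chain of idempotents $e_0\geq e_1\geq e_2\geq\cdots$ together with the reverse comparisons coming from $bb\inv$, yielding elements that generate a bicyclic submonoid unless the chain stabilizes. More precisely, if $a\neq b$ then $aa\inv\neq bb\inv$ (otherwise $a\leq b$ and $a\greenR b$ force $a=b$), and the pair consisting of a suitable power of $g$ restricted to these idempotents, together with its inverse, satisfies the defining relation $xy=1$, $yx\neq 1$ of $\Bi$; this contradicts Proposition~\ref{prp:avoids}. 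So $a=b$.

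For the converse, suppose $\ci\cap\leq$ is the identity relation; I want to show $S$ is stable. By Proposition~\ref{prp:avoids} it suffices to show $S$ contains no copy of $\Bi$. Suppose for contradiction that $\varphi:\Bi\to S$ is an embedding. In $\Bi$ we have, for instance, $(1,1)\ci(0,0)$ by Theorem~\ref{thm:bicyclic} since $1-1=0-0$, while also $(1,1)\leq(0,0)$ because $(1,1)$ and $(0,0)$ are idempotents with $(1,1)=(1,1)(0,0)$, i.e. $(1,1)=(1,1)\cdot 1$ where here $1=(0,0)$ is the identity of $\Bi$; but $(1,1)\neq(0,0)$. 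Transporting through $\varphi$, and using that $\varphi$ preserves inverses and the natural partial order (the latter because $\leq$ is definable by the semigroup operation and inversion via (N)), we get $\varphi(1,1)\ci\varphi(0,0)$ and $\varphi(1,1)\leq\varphi(0,0)$ with $\varphi(1,1)\neq\varphi(0,0)$, contradicting that $\ci\cap\leq$ is trivial. Hence $S$ is stable.

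The main obstacle is the forward direction: producing an explicit bicyclic subsemigroup from the data $a\ci b$, $a\leq b$, $a\neq b$. The subtlety is that $g$ need not itself be bicyclic-like on all of $S$, so one must pass to the appropriate idempotents $aa\inv$, $bb\inv$ and their $g$-translates and verify carefully that the generated subsemigroup has the bicyclic presentation — in particular that $xy=1$ in the local monoid while $yx\neq 1$, which is exactly where $a\neq b$ (equivalently $aa\inv\neq bb\inv$) gets used. It may be cleaner to avoid Proposition~\ref{prp:avoids} in this direction and instead directly exhibit, from $a\ci b$ and $a<b$, that $bb\inv > aa\inv$ with $g\inv(bb\inv)g$ lying strictly below $bb\inv$ and so on, forcing an infinite strictly descending chain of idempotents all $\ci$-related and all comparable under $\leq$ — any two adjacent ones then contradict triviality of $\ci\cap\leq$ directly; I would try this route first as it sidesteps constructing the embedding by hand.
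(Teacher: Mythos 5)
Your converse direction is correct and is essentially the paper's: a copy of $\Bi$ inside $S$ supplies distinct, $\leq$-comparable, $\ci$-related idempotents via Theorem~\ref{thm:bicyclic}, contradicting triviality of $\ci\cap\leq$. The problem is the forward direction, where your proposal has a genuine gap. First, the fallback route you say you would try first is circular: in the forward direction the hypothesis is stability and the \emph{goal} is triviality of $\ci\cap\leq$, so producing a descending chain of idempotents that are pairwise $\ci$-related and $\leq$-comparable and then ``contradicting triviality of $\ci\cap\leq$'' assumes exactly what is to be proved; any contradiction must be with stability, not with the conclusion. Second, your primary route (building a bicyclic submonoid from $a\ci b$, $a<b$, and invoking Proposition~\ref{prp:avoids}) is only sketched --- ``should produce a chain,'' ``a suitable power of $g$'' --- and that construction is the entire content of the direction. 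It can in fact be completed without iterating $g$ at all: put $e=aa\inv$, $f=bb\inv$; then $e\leq f$ (since $a\leq b$ gives $a=eb$, whence $aa\inv=e\,bb\inv$), $e\neq f$ (as you correctly argue from $a\neq b$), and $g\inv eg=f$, $gfg\inv=e$ by Lemma~\ref{lem:same_g}. The elements $x=fg\inv e$ and $y=egf$ lie in the local monoid $fSf$ and satisfy $xy=f$ and $yx=e\neq f$, so by the classical lemma of Clifford and Preston they generate a copy of $\Bi$, contradicting stability via Proposition~\ref{prp:avoids}. You would need to write out these verifications for your argument to stand.

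For comparison, the paper avoids the bicyclic monoid entirely in the forward direction and uses the definition of stability directly, in the form $a\in S^1ag\Rightarrow ag\in S^1a$. From $a\leq b$ and $g\in\mathcal{C}_{a,b}$ one computes $a=a(ag)\inv ag$ using (N) and Proposition~\ref{prp:sapir}; stability then gives $ag=ca$ for some $c\in S^1$, a short computation shows $ag=ga$, and substituting back yields $a=b$. That argument is purely equational and shorter than the embedding construction, which it reserves for the converse only.
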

\begin{proof}
  Assume $S$ is stable. Suppose $a\ci b$ and $a\leq b$. Then $g\inv ag = b$ and $gbg\inv = a$ for some $g\in S^1$.
  First, we compute
  \begin{equation}\label{eq:stable1}
    a = a\underbrace{a\inv a} = \underbrace{aa\inv} b = ab\inv b = ab\inv g\inv gb = a(\underbrace{gb})\inv \underbrace{gb}
    = a(ag)\inv ag\,,
  \end{equation}
  where the second and third equalities follow from (N) and the fourth and sixth equalities follow from Proposition \ref{prp:sapir}. Thus $a\in S^1 ag$. Since $S$ is stable, $ag\in S^1 a$, that is, $ag = ca$ for some $c\in S^1$.
  Now
  \[
  ag = ca = \underbrace{ca}a\inv a = \underbrace{ag}a\inv a = g\underbrace{ba\inv} a = gaa\inv a = ga\,,
  \]
  where the fourth equality follows from Proposition \ref{prp:sapir} and the fifth equality follows from (N).
  Using this in \eqref{eq:stable1}, we have
  \begin{align*}
    a &= a(ga)\inv ga = \underbrace{aa\inv} g\inv ga = b\underbrace{a\inv g\inv} ga = b(\underbrace{ga})\inv \underbrace{ga}\\
     &= \underbrace{b}(ag)\inv ag = g\inv \underbrace{ag(ag)\inv ag} = g\inv ag = b\,,
  \end{align*}
  where the third equality follows from (N).

  Conversely, suppose that $S$ is not stable, so by Proposition \ref{prp:avoids}, $S$ contains a copy of the bicyclic monoid $\Bi$.
  For nonnegative integers $m$, $n$ with $m < n$, we have $(n,n) < (m,m)$ in $\Bi$. By Theorem \ref{thm:bicyclic},
  $(m,m)\ci (n,n)$. Thus $\ci\cap\leq$ strictly contains the identity relation.
\end{proof}

\begin{corollary}\label{cor:finite}
  Let $S$ be a finite inverse semigroup. Then $\ci\cap\leq$ is the identity relation.
\end{corollary}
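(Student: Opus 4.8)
The plan is to deduce this immediately from Theorem~\ref{thm:stable} together with the observation that every finite inverse semigroup is stable. Thus the only thing to argue is stability, after which Theorem~\ref{thm:stable} gives that $\ci\cap\leq$ is the identity relation on $S$.

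First I would note that $S$, being finite, cannot contain an isomorphic copy of the bicyclic monoid $\Bi$ as a subsemigroup, simply because $\Bi$ is infinite. By Proposition~\ref{prp:avoids}, it follows that $S$ is stable. (Alternatively, one can observe directly that a finite semigroup is periodic, and periodic semigroups are stable, as recalled in the text preceding Proposition~\ref{prp:avoids}; either route works, but the appeal to Proposition~\ref{prp:avoids} is the most economical.) Having established stability, I would then invoke Theorem~\ref{thm:stable} to conclude that $\ci\cap\leq$ is the identity relation on $S$.

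There is no real obstacle here: the content is entirely in Theorem~\ref{thm:stable} and Proposition~\ref{prp:avoids}, both proved earlier, and the corollary is just the specialization to the finite case. The proof will therefore be a couple of lines.

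\begin{proof}
Since $S$ is finite and the bicyclic monoid $\Bi$ is infinite, $S$ cannot contain an isomorphic copy of $\Bi$ as a subsemigroup. By Proposition~\ref{prp:avoids}, $S$ is stable, and hence $\ci\cap\leq$ is the identity relation by Theorem~\ref{thm:stable}.
\end{proof}
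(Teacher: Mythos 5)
Your proof is correct and follows essentially the same route the paper intends: finiteness gives stability (whether via the absence of a bicyclic submonoid and Proposition~\ref{prp:avoids}, or via periodicity as noted in the text), and Theorem~\ref{thm:stable} then yields the conclusion. Nothing further is needed.
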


\section{Problems}
\label{sec:problems}

Almost factorizable inverse semigroups naturally generalize factorizable inverse monoids in the sense that
an inverse monoid is almost factorizable if and only if it is factorizable \cite{La98}.

\begin{prob}
  Does $i$-conjugacy in almost factorizable inverse semigroups have a reasonable characterization
  suitably generalizing {\rm Corollary \ref{cor:factorizable}?}
\end{prob}

The basic definition \eqref{eq:inv_conj} of conjugacy in an inverse semigroup can,
in principle, be extended to any class of semigroups in which there is some natural notion
of unary (weak) inverse map. For example, let $(S,\cdot,{}')$ be a unary $E$-inversive
semigroup, that is, $(S,\cdot)$ is a semigroup and the identity $x'xx'=x'$ holds.
Unary $E$-inversive semigroups include unary regular semigroups (in which the identity
$xx'x=x$ also holds) and epigroups (in which $x\mapsto x'$ is the unique pseudoinverse
\cite{Shevrin}). Define a notion of conjugacy in such unary semigroups by
$a\!\sim\! b$ if $g'ag=b$ and $gbg'=a$ for some $g\in S^1$. In general, these relations
will not be transitive (except, for instance, when the identity $(xy)'=y'x'$ holds), so
it is necessary to consider the transitive closure $\sim^*$.

\begin{prob}
  Study this notion of conjugacy in various interesting subclasses of unary $E$-inversive semigroups.
\end{prob}

Somewhat more promising is to consider the alternative formulations of $i$-conjugacy given
in Proposition~\ref{prp:sapir}. For instance, part (c) of the proposition depends only on the
idempotents $gg\inv$ and $g\inv g$. This immediately suggests a generalization to
\emph{restriction semigroups} and their various specializations, such as \emph{ample} semigroups
(see \cite{Gould} and the references therein). An algebra $(S,\cdot,{}^+,{}^*)$ is a
restriction semigroup if $(S,\cdot)$ is a semigroup; $S\to S;x\mapsto x^+$ is a unary
operation satisfying $x^+ x = x$, $x^+ y^+ = y^+ x^+$, $(x^+ y)^+ = x^+ y^+$,
$(xy)^+x = xy^+$; $S\to S;x\mapsto x^*$ is a unary operation satisfying dual identities;
and $(x^+)^* = x^+$, $(x^*)^+ = x^*$. Here $x^+$ and $x^*$ turn out to be idempotents.
Any inverse semigroup is a restriction semigroup with $x^+ = xx\inv$ and $x^* = x\inv x$.

For $a,b$ in a restriction semigroup $S$, define
\[
a \sim_r b\quad\iff\quad \exists_{g\in S^1}\ (\,ag=gb,\ ag^+ = a,\ g^*b = b\,)\,.
\]

\begin{prob}
  Study $\sim_r$ in restriction and ample semigroups. What is the relationship between
  $\sim_r$ and $\ncon${\rm?}
\end{prob}


\end{document}